\newtheorem{lemma}{Lemma}
\newtheorem{proposition}{Proposition}
\newtheorem{theorem}{Theorem}
\newtheorem*{theorem*}{Theorem}
\newtheorem{corollary}{Corollary}
\newtheorem*{corollary*}{Corollary}
\newtheorem*{proposition*}{Proposition}
\newtheorem{definition}{Definition}
\numberwithin{equation}{section}
\theoremstyle{remark}
\newtheorem{remark}{Remark}
\DeclareMathOperator{\Div}{div}
\DeclareMathOperator{\Spec}{Spec}
\DeclareMathOperator{\Pic}{Pic}
\DeclareMathOperator{\Picsch}{\mathbf{Pic}}
\DeclareMathOperator{\gp}{gp}
\DeclareMathOperator{\gr}{gr}
\DeclareMathOperator{\rk}{rk}
\DeclareMathOperator{\codim}{codim}
\DeclareMathOperator{\red}{red}
\DeclareMathOperator{\cha}{char}
\DeclareMathOperator{\cok}{cok}
\DeclareMathOperator{\Gal}{Gal}
\DeclareMathOperator{\dlog}{dlog}
\DeclareMathOperator{\pr}{pr}
\DeclareMathOperator{\sh}{sh}
\DeclareMathOperator{\et}{\acute{e}t}
\DeclareMathOperator{\depth}{depth}
\DeclareMathOperator{\D}{D}
\title[Cohomological flatness over discrete valuation rings]{Cohomological flatness over discrete valuation rings: numerical and logarithmic criteria}
\author[O. Gabber]{Ofer Gabber}
\email{gabber@ihes.fr}
\author[R. Lodh]{R\'emi Lodh}
\email{remi.shankar@gmail.com}
\begin{document}
\begin{abstract}
We give sufficient conditions for cohomological flatness (in dimension 0) over discrete valuation rings, generalizing classical results of Raynaud in two different ways. The first is a higher dimensional generalization of Raynaud's numerical criteria, in both the variant for the multiplicity of the special fibre and that for the index of the generic fibre. The second is a logarithmic criterion: we show that, over a log regular base, a proper flat fs log smooth morphism is cohomologically flat in dimension 0. We apply this latter result to curves and torsors under abelian varieties with good reduction, providing necessary and sufficient conditions for the log smoothness of their regular models over arbitrary discrete valuation rings.
\end{abstract}
\maketitle

\section{Introduction}
As part of his groundbreaking work on algebraic spaces, Michael Artin proved a theorem that effectively settled the question of the existence of the Picard scheme.

\begin{theorem}[Artin \cite{artinformalI,artinversal}]\label{artin}
Assume $f:X\to T$ is a finitely presented morphism of schemes that is proper and flat. If $f$ is cohomologically flat in dimension 0, then the Picard functor $\Picsch_{X/T}$ is representable by an algebraic space.
\end{theorem}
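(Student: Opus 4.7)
The plan is to apply Artin's general representability criterion for algebraic spaces to the fppf sheafification of the naive relative Picard functor $T' \mapsto \Pic(X_{T'})/f^*\Pic(T')$. I would first dispatch the ``formal'' axioms: the functor is an fppf sheaf by construction; it is limit-preserving because $f$ is of finite presentation, so that every invertible sheaf on $X_{T'}$ descends to a finite-type level in any inverse limit presentation of $T'$; and the diagonal is relatively representable, because for two line bundles $L_1,L_2$ the sheaf $\mathrm{Isom}(L_1,L_2)$ is a torsor under $\mathbb{G}_m$ whose pushforward is representable under the properness hypothesis.

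The substance lies in the deformation-theoretic conditions. One identifies the tangent functor at a point of $\Picsch_{X/T}(S)$ with $H^1(X_S, \mathcal{O}_{X_S})$, and locates obstructions in $H^2(X_S, \mathcal{O}_{X_S})$. To verify Schlessinger's compatibility axiom — that for a small surjection $A' \twoheadrightarrow A$ the set of lifts of a line bundle is a torsor under a coherent module on the base, functorially in $A'$ — one needs the formation of $f_*\mathcal{O}_{X_S}$ to commute with base change. This is exactly the content of cohomological flatness in dimension $0$: it kills the jumps in $h^0$ that would otherwise prevent the set of lifts of a line bundle along an infinitesimal thickening from being linearly controlled by the expected cohomology, and thus ensures that obstruction theory is linear in a coherent module in a way compatible with change of the Artinian base.

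The main obstacle I expect is the effectivity step: given a compatible system $(L_n)$ on the thickenings $X \times_T \Spec(A/\mathfrak{m}^{n+1})$ over a complete local Noetherian $T$-algebra $(A,\mathfrak{m})$, one must algebraize $(L_n)$ to a line bundle on $X \times_T \Spec(A)$. Grothendieck's formal existence theorem in EGA III supplies the coherent sheaf, and properness together with cohomological flatness ensure that the algebraization is genuinely invertible rather than merely coherent of generic rank one. Combined with Artin approximation, which propagates versality from the complete local ring to an étale neighbourhood, and with openness of versality, Artin's criterion then produces an algebraic space representing $\Picsch_{X/T}$.
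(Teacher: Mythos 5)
The paper does not prove Theorem \ref{artin}: it is Artin's theorem, quoted from \cite{artinformalI,artinversal}, and the paper only establishes the converse (Proposition \ref{artinconverse}). So the relevant comparison is with Artin's original argument, and your outline does follow that route: one verifies the hypotheses of Artin's representability criterion for the fppf-sheafified Picard functor (limit preservation, a deformation theory with tangent module $H^1(X_0,\mathcal{O}_{X_0})$ and obstructions in $H^2(X_0,\mathcal{O}_{X_0})$, effectivity via the Grothendieck existence theorem, then Artin approximation and openness of versality).

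There is, however, a genuine gap, and it sits exactly where the hypothesis of the theorem matters: you spend cohomological flatness in the wrong place and wave away the step for which it is indispensable. Representability of the diagonal does not follow ``under the properness hypothesis'' by pushing forward the $\mathbb{G}_m$-torsor $\mathrm{Isom}(L_1,L_2)$: what must be represented is the locus of base schemes over which $N=L_1^{-1}\otimes L_2$ becomes a pullback from the base, and for this one needs the functor $S'\rightsquigarrow\Gamma(X_{S'},\mathcal{O}_{X_{S'}}^*)$ (i.e.\ $f_*\mathbb{G}_m$) and the ``triviality locus'' of a line bundle to be representable. That is precisely where cohomological flatness in dimension $0$ enters: it makes $f_*\mathcal{O}_X$ locally free with formation commuting with arbitrary base change, hence $f_*\mathbb{G}_m$ a smooth affine group scheme, and it produces the representable locus where $f^*f_*N\to N$ is an isomorphism from an invertible sheaf. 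Without the hypothesis this step genuinely fails --- this is the content of Mumford-type examples, and Proposition \ref{artinconverse} of the paper shows cohomological flatness is in fact necessary for representability --- whereas the places you invoke it (Schlessinger/Rim compatibility of the tangent--obstruction theory, and the effectivity step) do not need it: the deformation modules $H^1$, $H^2$ behave correctly for any proper flat finitely presented morphism, and Grothendieck's existence theorem algebraizes the compatible system of line bundles directly, invertibility being checked on the closed fibre by flatness and Nakayama. So the scaffolding (Artin's criterion) is the right one and agrees with the cited source, but as written the proposal asserts rather than proves the diagonal/automorphism step, which is the heart of the matter; to repair it you must control $f_*\mathbb{G}_m$ and the triviality locus via cohomological flatness, not via properness alone.
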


This fundamental result was the crowning achievement of the work of a number of mathematicians on the Picard functor; see \cite{picard} for the history of this problem. Notably, Grothendieck had studied the problem in detail using the arsenal of new ideas and techniques he was introducing into algebraic geometry, reporting on his insights in the S\'eminaire Bourbaki \cite{grothpicard}. In particular, he had identified the condition of cohomological flatness in dimension 0 as one of central importance to the problem and suggested that it might be sufficient to ensure the representability of the Picard functor as a scheme, cf. \cite[V, 5.2]{grothpicard}.

As Artin's theorem suggests, this turned out to be too optimistic, and Mumford soon furnished a counterexample \cite[VI, \S0]{grothpicard}.\footnote{Mumford's example seems to have been an important step towards Theorem \ref{artin}, cf. \cite[p. 210]{neronmodels}} But the importance of cohomological flatness was noted by Michel Raynaud, who studied it in relation to questions of representability of functors related to $\Picsch$ in the case $T$ is the spectrum of a discrete valuation ring. The fruits of his efforts were published in \cite{raynaud}, which, among other results, established the following `numerical criterion' for cohomological flatness.

\begin{theorem}[Raynaud \cite{raynaud}]\label{raynaudintro}
Let $T$ be the spectrum of a discrete valuation ring and $f:X\to T$ a proper morphism. Assume the following conditions hold:
\begin{enumerate}[label=\normalfont(\alph*)]
\item $X$ is integrally closed in the generic fibre of $f$
\item $f_*\mathcal{O}_X=\mathcal{O}_T$
\item either $\dim X\leq 2$ or $T$ has residue characteristic 0.
\end{enumerate}
If the gcd of the geometric multiplicities of the components of the special fibre of $f$ is invertible on $T$, then $f$ is cohomologically flat in dimension 0.
\end{theorem}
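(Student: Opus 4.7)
The plan is first to reformulate cohomological flatness. Hypothesis (b) together with the short exact sequence $0\to\mathcal{O}_X\xrightarrow{\pi}\mathcal{O}_X\to\mathcal{O}_{X_s}\to 0$ (where $\pi$ uniformises $R$) gives, on applying $Rf_*$, the four-term exact sequence
$$0\to k\to H^0(X_s,\mathcal{O}_{X_s})\to (R^1f_*\mathcal{O}_X)[\pi]\to 0.$$
Over a DVR the condition of cohomological flatness in dimension $0$ is equivalent, by EGA~III, to $R^1f_*\mathcal{O}_X$ being torsion free, and hence via the displayed sequence to the equality $H^0(X_s,\mathcal{O}_{X_s})=k$. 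This scalar equality is what I would aim to prove.

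Next I would reduce to the case that $R$ is strictly Henselian; this is harmless since (a)--(c) and the conclusion are all stable under ind-\'etale base change of $R$. Then $k$ is separably closed, and the geometric multiplicity of a component $Y_i\subset X_s$ becomes $m_i=d_i p^{a_i}$, where $d_i$ is its scheme-theoretic multiplicity and $p^{a_i}$ is the inseparable degree of $k(Y_i)/k$. By hypothesis (a), the local ring $\mathcal{O}_{X,\xi_i}$ at the generic point of $Y_i$ is a DVR, so $d_i$ equals the length of $\mathcal{O}_{X_s,\xi_i}$. Setting $A=H^0(X_s,\mathcal{O}_{X_s})$, a finite local $k$-algebra, the task becomes to show $A=k$. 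The strategy is to filter $\mathcal{O}_{X_s}$ by the symbolic-power ideals of the $Y_i$, to analyse the restriction maps $A\to\mathcal{O}_{X_s,\xi_i}$ at each generic point, and to combine $\gcd(m_i)\in R^\times$ with B\'ezout to produce divisors supported on $X_s$ whose classes in $\Pic(X)$ have order coprime to $\cha k$. Restricting such divisors to nilpotent thickenings of $X_s$ should impose the constraints needed to force the maximal ideal of $A$ to vanish.

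The hard part is carrying out this last step, and this is where dichotomy (c) becomes essential. For $\dim X\le 2$, the fibre $X_s$ is a curve on a surface, and one can exploit Zariski's lemma together with the negative semi-definiteness of the intersection pairing on $X_s$ to rigidify the analysis of divisors supported on the fibre---this is Raynaud's original intersection-theoretic approach. In residue characteristic zero one instead bypasses the $p$-torsion obstructions that otherwise block the lifting of nilpotent sections, by using the $\dlog$ map $\mathcal{O}_X^\times\to\Omega^1_{X/R}$ and de~Rham-theoretic inputs. I expect the surface case to be the bulk of the work; the main obstacle throughout is identifying the precise diagram linking the gcd of geometric multiplicities to nilpotent sections of $\mathcal{O}_{X_s}$, and one should moreover anticipate that (c) cannot be omitted without genuinely new ideas---this being part of what motivates the present paper.
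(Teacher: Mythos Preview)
Your reformulation of cohomological flatness as $H^0(X_t,\mathcal{O}_{X_t})=k(t)$ and the reduction to a strictly henselian base are both correct and match the paper's setup. Your idea of filtering $\mathcal{O}_{X_t}$ and analysing the graded pieces at generic points is also on the right track. From there, however, your proposal diverges sharply from the paper: you plan to finish via condition~(c), using intersection theory on surfaces (Zariski's lemma, negative semi-definiteness of the intersection pairing) or characteristic-zero de~Rham arguments. This is essentially Raynaud's original strategy from~\cite{raynaud}, and while it would prove the theorem as stated, you have left the hard steps as a plan only---you explicitly say ``the hard part is carrying out this last step'' and do not supply it.

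The paper does something quite different: it proves the result \emph{without} condition~(c) (Corollary~\ref{raynaud2}), thereby answering question~(1) from the introduction. The route is via the index criterion (Theorem~\ref{gabberraynaudindex}): over a strictly henselian base the index of $X_u$ divides $\delta_f$, so it suffices to show cohomological flatness when the index is prime to~$p$. The key technical input is Proposition~\ref{vanishingcriterion}, which analyses the filtration of $\mathcal{O}_{X_t}$ by the sheaves $\mathcal{O}(-aD/N)$ attached to \emph{rational} divisorial cycles $aD/N$ (with $D=\Div_X(\pi)$). The decisive new device is the $N$-th power map $\varphi_N:\mathcal{F}_a\to\mathcal{G}_a$ on graded pieces: one shows (Lemmas~\ref{nonzero}--\ref{cartierdivisor}) that a nonzero global section of some $\mathcal{F}_a$ forces the existence of an effective Cartier divisor $C$ with $mD=p^eC$ for some $e\ge 1$ and $(m,p)=1$, which a tame quasi-section then rules out. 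This is precisely the ``genuinely new idea'' you anticipated would be needed to remove~(c); the paper supplies it, and in particular never invokes intersection theory on the special fibre or de~Rham methods.
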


Raynaud gave examples in \cite{raynaud} that show that the condition on the gcd of the multiplicities is necessary in general. This result raises two immediate questions:
\begin{enumerate}[label=\normalfont(\arabic*)]
\item Does Theorem \ref{raynaudintro} hold without restriction on $\dim X$?
\item If the gcd of the multiplicities is not invertible on $T$, is there a condition of local nature that one can substitute for (a) to guarantee cohomological flatness in dimension 0?
\end{enumerate}

In this paper we answer both of these questions in the affirmative. Firstly, the answer to (1) is a straightforward `yes', as will follow from a closely related result that we now describe.

In \cite{raynaud}, Raynaud gave another numerical criterion for cohomological flatness in dimension 0, using the index instead of the gcd of the multiplicities. Recall that the \emph{index} of a scheme of finite type over a field is the gcd of the degrees of its closed points. For a relative curve over a strictly henselian discrete valuation ring, Raynaud showed that $f$ is cohomologically flat in dimension 0 if the index of the generic fibre of $f$ is 1 (\cite[8.2.1]{raynaud}). We prove that it is enough to assume that the index is prime to the residue characteristic and, moreover, that this holds in arbitrary relative dimension.

\begin{theorem}\label{gabberraynaudindex}
Let $T$ be the spectrum of a discrete valuation ring and $f:X\to T$ a proper morphism. Assume the following conditions hold:
\begin{enumerate}[label=\normalfont(\alph*)]
\item $X$ is integrally closed in the generic fibre of $f$
\item the special fibre of $f$ is connected.
\end{enumerate}
If the index of the generic fibre of $f$ is invertible on $T$, then $f$ is cohomologically flat in dimension 0.
\end{theorem}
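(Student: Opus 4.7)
The plan is to reduce to a situation where the paper's logarithmic criterion is applicable, and then to descend cohomological flatness using that the index is coprime to $p = \cha(k)$.

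\textbf{Reductions.} Taking the strict henselization of $R$ I may assume $R$ is strictly henselian. Using $X$ integrally closed in $X_\eta$ together with $X_s$ connected, the Stein factorization of $f$ gives $X \to \Spec R' \to T$ where $R' = H^0(X,\mathcal{O}_X)$: connectedness of the special fibre forces $R' \otimes_R k$ to be local, while the integral closure hypothesis makes $R'$ a normal domain, so $R'$ is a DVR finite over $R$. Its degree $[R':R]$ divides every $[\kappa(P):K]$ for a closed point $P$ of $X_\eta$, hence divides the index, hence is coprime to $p$. As $k$ is separably closed this forces $R'/R$ totally and tamely ramified with trivial residue extension. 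Cohomological flatness in dimension 0 is equivalent to $R$-torsion freeness of $R^1 f_*\mathcal{O}_X$, and for a finitely generated $R'$-module being $R$-torsion free is equivalent to being $R'$-torsion free, so I may replace $T$ by $\Spec R'$ and assume $f_*\mathcal{O}_X = \mathcal{O}_T$. The goal becomes $H^0(X_s,\mathcal{O}_{X_s}) = k$.

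\textbf{Exploiting the index.} By hypothesis there is a closed point $P$ of $X_\eta$ with $d = [\kappa(P):K]$ coprime to $p$; as above the integral closure $R''$ of $R$ in $\kappa(P)$ is a DVR, with $R''/R$ totally tamely ramified of degree $d$. The generic fibre of $X_{R''}$ then has an $R''$-rational point whose closure in the normalization $X_{R''}^\nu$ of $X_{R''}$ in its generic fibre is a section of $X_{R''}^\nu \to \Spec R''$. Since $R''/R$ is finite faithfully flat, cohomological flatness in dimension 0 satisfies fpqc descent along the base change, so it suffices to prove cohomological flatness for $X_{R''}^\nu \to \Spec R''$.

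\textbf{Log smooth model and the logarithmic criterion.} I now invoke Gabber's prime-to-$p$ alteration theorem, combined with semistable reduction (absorbing any further tame base change needed), to obtain a proper generically finite morphism $\phi: X' \to X_{R''}^\nu$ of generic degree coprime to $p$ with $X'$ regular and $X'_s$ a reduced strict normal crossings divisor. Equipping $X'$ with the divisorial log structure induced by $X'_s$ and $\Spec R''$ with its canonical log structure, $X' \to \Spec R''$ is fs log smooth over a log regular base, so the paper's logarithmic criterion yields that $X' \to \Spec R''$ is cohomologically flat in dimension 0.

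\textbf{Trace descent.} The final step descends cohomological flatness from $X' \to \Spec R''$ to $X_{R''}^\nu \to \Spec R''$. The trace map $R\phi_*\mathcal{O}_{X'} \to \mathcal{O}_{X_{R''}^\nu}$ for the proper generically \'etale morphism $\phi$, composed with the adjunction unit, is multiplication by the generic degree, which is a unit on $X_{R''}^\nu$ since coprime to $p$. This exhibits $\mathcal{O}_{X_{R''}^\nu}$ as a direct summand of $R\phi_*\mathcal{O}_{X'}$; pushing forward to $\Spec R''$, the sheaf $R^0(f_{R''})_* \mathcal{O}_{X_{R''}^\nu}$ becomes a direct summand of $R^0(f_{R''} \circ \phi)_* \mathcal{O}_{X'}$ compatibly with base change, and cohomological flatness in dimension 0 of the latter transfers to the summand. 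The main obstacle I anticipate is verifying that the trace splitting commutes with arbitrary base change in the requisite generality; this reduces to a functoriality property of the trace for proper generically \'etale morphisms of normal schemes with degree coprime to the residue characteristic, and is the technical core of the argument.
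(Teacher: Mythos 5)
Your opening reductions (strict henselization, Stein factorization, producing a tame extension $R''$ over which $X$ acquires a rational point on the generic fibre) match the paper's first step, but the argument breaks at the sentence ``Since $R''/R$ is finite faithfully flat, cohomological flatness in dimension 0 satisfies fpqc descent along the base change, so it suffices to prove cohomological flatness for $X_{R''}^{\nu}\to\Spec R''$.'' Descent along $R''/R$ relates $X\to\Spec R$ to $X\times_R R''\to\Spec R''$, not to the normalization $X_{R''}^{\nu}$: since $R''/R$ is ramified, $X\times_R R''$ is in general no longer integrally closed in its generic fibre (the paper's Lemma \ref{complete} only covers base changes with $S_t$ a field), and the normalization map is a finite \emph{birational} modification, so no prime-to-$p$ trace trick splits $\mathcal{O}_{X_{R''}}\to\nu_*\mathcal{O}_{X_{R''}^{\nu}}$. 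Cohomological flatness of the normalization (let alone of a further alteration of it) simply does not bear on $H^0$ of the special fibre of $X\times_R R''$, which is what controls cohomological flatness of $X$ itself. Indeed, after enough base change and modification one can always reach a cohomologically flat model with no hypothesis on the index, so an argument in which the index only enters through auxiliary models cannot prove the theorem; the whole content is a statement about the given $X$, and the paper gets it by producing, from a failure of reducedness of $H^0(X_t,\mathcal{O}_{X_t})$, a Cartier divisor $C$ on $X$ with $mD=p^eC$ (Proposition \ref{vanishingcriterion}) and then contradicting this by pulling back along the tame quasi-section and computing valuations. Your proposal has no analogue of this step.

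A second, independent problem is the appeal to ``Gabber's prime-to-$p$ alteration theorem, combined with semistable reduction.'' Gabber's $\ell'$-alteration theorem requires $\ell$ to be invertible on the scheme, so it yields alterations of degree prime to $\ell$ only for $\ell\neq p$; an alteration of degree prime to the residue characteristic producing a regular model whose special fibre is a reduced (or even prime-to-$p$ multiplicity) normal crossings divisor, after only tame base change, is not available in general, and without control of the multiplicities the divisorial log structure need not be log smooth over $(T,M_T)$, so Theorem \ref{main} cannot be invoked. (Your final trace step, by contrast, could be repaired: with $X_{R''}^{\nu}$ normal and $\deg\phi$ invertible, the normalized trace splits $f_*$-level sheaves, and the torsion-freeness criterion of Proposition \ref{basechange} (vii) applied to $H^1$ avoids the base-change compatibility you worry about. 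But the two gaps above are fatal to the overall strategy.)
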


This result implies Raynaud's Theorem \ref{raynaudintro}, without limitation on the dimension, thereby providing a positive answer to question (1). We also show a variant where the index is replaced by the gcd of the \emph{apparent} multiplicities of the special fibre (see Theorem \ref{gabberraynaud}).

For question (2), Raynaud's examples show that the question is not as straightforward. There was essentially no progress until quite recently, when we realized that cohomological flatness is a necessary condition for the log smoothness of regular curves over discrete valuation rings \cite{logsmoothcurves} (see also \cite{mitsuismeets}). This recent result for curves begged the question of whether it held in greater generality, and indeed this is the case.

\begin{theorem}\label{main}
Let $T$ be a locally noetherian scheme, $M_T$ an fs log structure on $T$, and $f_{\log}:(X,M_X)\to (T,M_T)$ a morphism of fs log schemes whose underlying morphism $f:X\to T$ is proper and flat. If $(T,M_T)$ is log regular and $f_{\log}$ is log smooth, then $f$ is cohomologically flat in dimension 0.
\end{theorem}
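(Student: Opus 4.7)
\medskip

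Our plan is to reduce the problem to the case of a discrete valuation ring base, where the numerical criterion Theorem~\ref{gabberraynaudindex} (or the multiplicity-based variant alluded to above) can be invoked. First, cohomological flatness in dimension~$0$ is an étale-local condition on $T$, so we may assume $T = \Spec A$ with $A$ a strictly henselian complete noetherian local ring. Since $(T,M_T)$ is log regular, $A$ is normal and Cohen--Macaulay (Kato). Moreover, log smoothness over a log regular base preserves log regularity, so $(X,M_X)$ is also log regular, whence $X$ is normal; in particular $X$ is integrally closed in its generic fibre, ensuring hypothesis~(a) of Theorem~\ref{gabberraynaudindex} after any base change we need.

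\medskip

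To reduce to a DVR base, we use that on a log regular scheme $T$ the codimension-$1$ points $\eta$ split into two types: those where $\overline M_T$ is non-trivial, necessarily $\mathbb{N}$ by Kato's classification of log regular traits (so the localization is a trait with standard log structure), and those where $\overline M_T = 0$ (a regular trait with trivial log structure). In either case, base-changing $f_{\log}$ along $\Spec \mathcal{O}_{T,\eta} \to T$ yields a proper flat log smooth morphism of fs log schemes over a log regular trait.

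\medskip

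Over the trait $\Spec \mathcal{O}_{T,\eta}$, if $M_T$ is trivial at $\eta$ then $f$ is smooth and cohomological flatness is automatic. Otherwise, Kato's chart theorem for log smooth morphisms gives, étale-locally on $X$, a chart $\mathbb{N} \to Q$ with $Q$ sharp fs, sending $1 \mapsto q$ such that the torsion part of $Q^{\gp}/\mathbb{Z}q$ has order invertible on $T$. A direct computation with the special fibre $\Spec k[Q]/(q)$ shows that the gcd of the multiplicities of its irreducible components divides this torsion order, and hence is invertible on $T$. After passing to components of the Stein factorization of the base change (to ensure a connected special fibre, using normality of $X$), the appropriate numerical criterion -- the generalization of Raynaud's theorem without dimension restriction, which follows from Theorem~\ref{gabberraynaudindex} -- applies and gives cohomological flatness over the trait.

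\medskip

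The chief obstacle is the last step: gluing cohomological flatness over all codimension-$1$ localizations of $T$ into cohomological flatness over $T$ itself. Here one must exploit that $X$ and $T$ are normal and that $T$ is Cohen--Macaulay. Reflexivity and depth arguments should promote local freeness of $f_*\mathcal{O}_X$ from codimension~$1$ to all of $T$, while compatibility of its formation with arbitrary base change at higher-codimension points should follow from a semi-continuity argument combined with the trait-level statements and a careful analysis of the induced log structures. We expect this globalization step to be the principal technical difficulty of the proof.
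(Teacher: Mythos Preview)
Your proposal has a fatal gap in the trait case, which is the heart of the matter. You claim that for a log smooth morphism over a trait with standard log structure, the gcd of the multiplicities of the special fibre is invertible on $T$, deducing this from a local toric computation with a Kato chart $\mathbb{N}\to Q$. But this is false globally: log smooth morphisms over a DVR can have special fibre of multiplicity divisible by $p$. The paper provides explicit examples --- the contracted product $A\times_T^{\mu_n}T^{(n)}$ in \S\ref{examplesection} with $p\mid n$, and the Raynaud models treated in Theorem~\ref{torsor}, where the case $p\mid d$ is handled separately precisely because it occurs. The issue is that the \'etale map from $X$ to the toric model $T\times_{\mathbb{Z}[P]}\mathbb{Z}[Q]$ need not hit every component of the toric special fibre; the components with multiplicity prime to $p$ can be missed entirely. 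So even if your local claim about $\Spec k[Q]/(q)$ were correct, it does not give a bound on $d_f$, and neither Theorem~\ref{gabberraynaudindex} nor Corollary~\ref{raynaud2} can be invoked. Indeed, the whole point of Theorem~\ref{main} is to answer question~(2) of the introduction: to give a criterion for cohomological flatness that works \emph{when} the numerical criteria fail.

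Your second step --- gluing cohomological flatness from codimension~$1$ to all of $T$ --- is also a genuine gap, as you acknowledge. Local freeness of $f_*\mathcal{O}_X$ in codimension~$1$ plus reflexivity does not suffice to control base change at higher-codimension points; the depth arguments you gesture at do not furnish this. But since the trait case already fails, this is moot.

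The paper's proof takes a completely different route, with no reduction to the numerical criteria. After reducing to $T$ strictly local with algebraically closed residue field, one passes to a Kummer cover $(T^{(n)},M_{T^{(n)}})\to (T,M_T)$ for $n\gg 0$ so that the pullback $f_{\log}^{(n)}$ becomes a \emph{saturated} morphism (using Tsuji's theory). Saturated log smooth morphisms have reduced fibres, hence $f^{(n)}$ is cohomologically flat by Proposition~\ref{basechange}~(iv). One then descends: the finite diagonalizable group $G=\D(P^{\gp}\otimes\mathbb{Z}/n)$ acts on $X^{(n)}$ with quotient $X$, so $q_*\mathcal{O}_{X^{(n)}}$ is $\D(G)$-graded with degree-zero part $\mathcal{O}_X$, and taking degree-zero parts in the base change isomorphism for $f^{(n)}$ yields the base change isomorphism for $f$. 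The argument is short and avoids both multiplicities and any codimension induction.
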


The proof of Theorem \ref{main} is surprisingly simple, the idea being to pass to a finite extension of $T$ where $f_{\log}$ acquires reduced fibres (which exists thanks to Tsuji \cite{tsuji}), and then to descend by taking the quotient under an action of roots of unity.

Theorem \ref{main} answers question (2), in the sense that if we fix a log structure $M_X$, then we have a local criterion for cohomological flatness in dimension 0 that applies to situations where the gcd of the multiplicities of the special fibre may be divisible by the residue characteristic. This, however, depends on the choice of $M_X$. Nevertheless, in many situations there is a natural choice of $M_X$, and the converse to Theorem \ref{main} may even be true. Namely, for torsors under abelian varieties with good reduction, Raynaud \cite{raynaudquotient} constructed canonical regular projective models over discrete valuation rings for which we show the following.

\begin{theorem}\label{raynaudmodelcriterion}
Assume $T$ is the spectrum of a discrete valuation ring and $M_T$ its canonical log structure. For $A/T$ an abelian scheme and $Y$ a torsor under the generic fibre of $A/T$, the following are equivalent:
\begin{enumerate}[label=\normalfont(\roman*)]
\item the Raynaud model of $Y$, with its canonical log structure, is log smooth over $(T,M_T)$
\item the Raynaud model of $Y$ is cohomologically flat in dimension 0 over $T$.
\end{enumerate}
\end{theorem}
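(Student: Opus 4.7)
The implication (i) $\Rightarrow$ (ii) is immediate from Theorem \ref{main}: the Raynaud model $X$ of $Y$ is proper and flat over $T$ by construction, and $(T, M_T)$ is log regular because $T$ is a DVR with its canonical log structure.

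For the converse (ii) $\Rightarrow$ (i), we exploit the structure of the Raynaud model from \cite{raynaudquotient}: $X$ is regular and projective over $T$, its reduced special fibre $Z := X_k^{\mathrm{red}}$ is a form of $A_k$, and $X_k = n Z$ as a Cartier divisor, where $n \geq 1$ is the period of $Y$. The canonical log structure on $X$ is the divisorial one associated to $Z$, and a direct chart computation at the generic point of $Z$ (where the characteristic monoid map is $\mathbb{N}\to\mathbb{N}$, $1\mapsto n$) shows that log smoothness of $X\to(T,M_T)$ is equivalent to $n$ being invertible on $T$. Away from $Z$, the morphism is smooth with trivial log structures, and thus (i) reduces to the statement that $n$ is invertible on $T$.

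Reducing to the strictly henselian case, one has $f_*\mathcal{O}_X = R$, so (ii) amounts to the vanishing condition $H^0(X_k,\mathcal{O}_{X_k}) = k$. Consider the filtration of $\mathcal{O}_{X_k}$ by powers of the ideal $\mathcal{I}_Z$ of $Z$ in $X$; its graded pieces are $\mathcal{L}^{\otimes i}$ for $i=0,\dots,n-1$, where $\mathcal{L} := \mathcal{O}_X(-Z)\otimes\mathcal{O}_Z$ is an $n$-torsion line bundle on $Z\cong A_k$ (because $\mathcal{O}_X(-nZ) = \pi\mathcal{O}_X$ is trivial). The hypothesis $H^0(X_k,\mathcal{O}_{X_k}) = k$ is then a strong constraint not only on the order of $\mathcal{L}$ in $\Pic^0(A_k)$ but also on the extension classes of the filtration, controlled by $\Ext^1_Z(\mathcal{L}^i,\mathcal{L}^{i+1})$-groups on the abelian variety $A_k$.

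The crux of the argument, and its main obstacle, is to combine this cohomological vanishing with the specific identification (provided by Raynaud's construction) of $\mathcal{L}$ and of the relevant extension data with invariants of the torsor $Y$ in the torsion subgroup scheme $A^\vee[n]$. When $p = \cha k$ divides $n$, the infinitesimal part of $A_k^\vee[n]$ prevents both the order of $\mathcal{L}$ and the non-triviality of the extensions from being strong enough to annihilate the contributions of the trivial graded pieces $\mathcal{L}^{id} = \mathcal{O}_Z$ (where $d$ is the order of $\mathcal{L}$); consequently extra global sections survive in $H^0(X_k,\mathcal{O}_{X_k})$, contradicting (ii). Hence $(n,p)=1$, so $n$ is invertible on $T$, and (i) follows from the chart description above.
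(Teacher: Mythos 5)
The implication (i) $\Rightarrow$ (ii) via Theorem \ref{main} is fine, but your treatment of (ii) $\Rightarrow$ (i) rests on reducing both conditions to ``the multiplicity $d$ of the special fibre (your $n$) is invertible on $T$'', and that reduction is false in both directions. Log smoothness of the Raynaud model over $(T,M_T)$ is \emph{not} equivalent to $(d,p)=1$: at the generic point of $Z$ one has $\pi=vx^{d}$ with $v$ a unit, the chart $\mathbb{N}\to\mathbb{N}$, $1\mapsto d$ ignores $v$, and Kato's criterion only requires \emph{some} good chart to exist; when $p\mid d$ the morphism can still be log smooth provided $\dlog(\pi)$ remains part of a basis of $\omega^1_X|_{X_0}$ (Proposition \ref{ncd} (v)), which is governed by the unit $v$. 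The paper's own example in \S\ref{examplesection} refutes your claim: for $A/T$ containing $\mu_n$ (e.g.\ $n=p$, $A_t$ ordinary), $X=A\times_T^{\mu_n}T^{(n)}$ is the regular proper $A$-model of an $A_u$-torsor, is log smooth over $(T,M_T)$, and has multiplicity $n$ divisible by $p$; by Theorem \ref{main} it is also cohomologically flat. Hence your ``crux'' step --- extracting extra sections of $H^0(X_k,\mathcal{O}_{X_k})$ from $p\mid d$ --- is an attempt to prove a false statement and cannot be completed, which is why it resisted you.

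The filtration by powers of $\mathcal{I}_Z$, which you set up exactly as the paper does, is used there in the opposite way. From cohomological flatness and Mumford's vanishing ($H^i(A,L)=0$ for nontrivial $L\in\Pic^0$) one gets $\Gamma(\mathcal{L}^{\otimes i})=0$ for $0<i<d$, $\Gamma(\mathcal{O}_{X_0})=k(t)$, and that $\mathcal{L}=\mathcal{O}_X(-Z)|_{Z}$ has order \emph{exactly} $d$. If $p\nmid d$, log smoothness follows directly from the criterion of Proposition \ref{ncd} (v). If $p\mid d$, the nonzero class $\tfrac{d}{p}\mathcal{L}\in\Pic(X_0)[p]\cong H^0(X_0,\nu^1)$ (with $\nu^1=\mathbb{G}_m/p$) is the class $\alpha$ of the unit $v$, and $\dlog(\alpha)$ is a nonzero global $1$-form on $X_0$, hence nowhere vanishing because $X_0$ is a (form of a) quotient of an abelian variety; this forces $\dlog(\pi)$ to be part of a basis of $\omega^1_X|_{X_0}$, giving log smoothness even though $p\mid d$. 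The theorem is precisely a statement that survives $p\mid d$, so any proof routed through $(d,p)=1$ is structurally wrong, not merely incomplete.
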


This generalizes a result for genus 1 curves shown in \cite{mitsuismeets} and \cite{logsmoothcurves}. For curves of higher genus, regular normal crossings models have natural log structures, but the resulting morphism of log schemes might not be log smooth, even if the underlying scheme is cohomologically flat over $T$. So we provide necessary and sufficient conditions for the log smoothness of such models, extending the main result of \cite{logsmoothcurves}, notably to the case of an imperfect residue field---see Theorem \ref{curves}. This was an early motivation for Theorem \ref{main}.\\

To finish this introduction, we give a short overview of the paper and its main results.

We begin \S\ref{cohflatsection} by recalling the basic properties of cohomological flatness in dimension 0, before proving the converse to Artin's Theorem \ref{artin} (Proposition \ref{artinconverse}).

In \S\ref{numericalsection}, we recall the definition of multiplicities, Raynaud's condition (N), and show some basic properties. Then, we prove Theorem \ref{gabberraynaudindex} by studying divisiorial cycles with rational coefficients and their associated sheaves, the key result being Proposition \ref{vanishingcriterion}. We also prove a variant where the index of the generic fibre is replaced by the gcd of the multiplicities of the components of the special fibre (Theorem \ref{gabberraynaud}).

In \S\ref{logsection}, after some preliminaries we give the proof of Theorem \ref{main}, as described above. The rest of the section is devoted to applications to torsors under abelian schemes and curves.

In \S\ref{torsorsection}, we study models of torsors under the generic fibre of an abelian scheme over the spectrum of a discrete valuation ring $T$. We show Theorem \ref{raynaudmodelcriterion} and deduce necessary and sufficient conditions for the existence of a proper log smooth $T$-model (Corollary \ref{torsorlogsmooth}).

Finally, in \S\ref{curvesection} we give necessary and sufficient conditions for log smoothness of proper regular curves over the spectrum of a discrete valuation ring $T$, extending the main theorem of \cite{logsmoothcurves} to arbitrary residue fields and allowing horizontal components in the log structure (Theorem \ref{curves}). To show this, we establish some properties of regular log smooth $T$-schemes of arbitrary dimension (Proposition \ref{ncd}).

\subsubsection*{Acknowledgements}
We warmly thank Luc Illusie for his interest and assistance.

\subsection{Conventions}\label{conventions}
If $T$ denotes the spectrum of a discrete valuation ring we will write $t\in T$ for its closed point with residue field $k(t)$ and $u\in T$ its generic point with residue field $k(u)$. We let $p$ denote the characteristic of $k(t)$. The \emph{canonical log structure} of $T$ is the direct image log structure of the trivial log structure on $u$.

Irreducible components are endowed with the reduced induced subscheme structure.

We only consider fs log schemes. In particular, fibre products are taken in the category of fs log schemes. Schemes can be viewed as fs log schemes with the trivial log structure; in that case we may simply omit the log structure from notation. For a log structure $M$ on a scheme $X$, we write $\overline{M}=M/\mathcal{O}_{X}^*$.

All monoids will be commutative with unity. For a monoid $P$ we write $P^{\gp}$ for its Grothendieck group.

For an abelian group $A$ and integer $n$, we write $A[n]\subset A$ for the subset of elements annhilated by $n$, and $A[\text{tor}]=\varinjlim_nA[n]$ for the torsion subgroup of $A$. If $A$ is finite we denote its order $|A|$.

\section{Cohomological flatness}\label{cohflatsection}
Let $T$ be a scheme and $f:X\to T$ a finitely presented morphism that is proper and flat.
 
\subsection{Definition and basic properties}
Following Raynaud \cite[1.4]{raynaud}, we will say that $f$ is  \emph{cohomologically flat} if $f$ is cohomologically flat in dimension 0 (\cite[III\textsubscript{2}, 7.8.1]{ega}).\footnote{Warning: this conflicts with the terminology of \cite[III\textsubscript{2}, 7.8.1]{ega}!} We gather some basic properties of cohomological flatness.

\begin{proposition}\label{basechange}
\begin{enumerate}[label=\normalfont(\roman*)]
\item $f$ is cohomologically flat if and only if the formation of $f_*\mathcal{O}_X$ commutes with base change.
\item $f$ is cohomologically flat if and only if there is a faithfully flat morphism $S\to T$ such that $f\times_TS$ is cohomologically flat.
\item If $g:T\to S$ is a finite locally free morphism and $f$ is cohomologically flat, then so is $g\circ f$.
\item If $f$ has geometrically reduced fibres, then $f$ is cohomologically flat.
\item $f$ is cohomologically flat if and only if, for all $t\in T$, the homomorphism $(f_*\mathcal{O}_X)_t\to H^0(X_t,\mathcal{O}_{X_t})$ is surjective.
\item If $T$ is reduced, then $f$ is cohomologically flat if and only if $\dim_{k(t)}H^0(X_t,\mathcal{O}_{X_t})$ is a locally constant function on $T$.
\item If $T$ is the spectrum of a discrete valuation ring, then $f$ is cohomologically flat if and only if $H^1(X,\mathcal{O}_X)$ is a torsion-free $\mathcal{O}(T)$-module.
\end{enumerate}
\end{proposition}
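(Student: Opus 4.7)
The approach is to reduce each of the seven assertions to the standard cohomology-and-base-change formalism of \cite[III\textsubscript{2}, \S7]{ega}. Working Zariski-locally on $T$ and reducing to the noetherian case by approximation, one disposes of a bounded complex $K^\bullet$ of finitely generated projective $\mathcal{O}_T$-modules such that for every morphism $T'\to T$ one has a natural isomorphism $R^if'_*\mathcal{O}_{X'}\cong H^i(K^\bullet\otimes_{\mathcal{O}_T}\mathcal{O}_{T'})$. With this tool in hand every item is essentially formal, and the plan is to verify the assertions roughly in the stated order, using earlier ones as lemmas where convenient.

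Items (i)--(iii) are the formal part. Item (i) unpacks the EGA definition: cohomological flatness in dimension 0 in the sense of \cite[III\textsubscript{2}, 7.8.1]{ega} says exactly that formation of $R^0f_*\mathcal{O}_X=f_*\mathcal{O}_X$ commutes with arbitrary base change. Item (ii) is faithfully flat descent: a morphism of quasi-coherent sheaves on $T$ is an isomorphism iff its pullback to a faithfully flat $T'$ is. Item (iii) is immediate from $(g\circ f)_*=g_*\circ f_*$ together with the exactness and base-change compatibility of $g_*$ for $g$ affine, here even finite locally free.

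Items (iv)--(vi) use the fibrewise picture. For (v), a Nakayama-type argument applied to $K^\bullet$, via \cite[III\textsubscript{2}, 7.7.5]{ega}, shows that surjectivity of $(f_*\mathcal{O}_X)_t\to H^0(X_t,\mathcal{O}_{X_t})$ at every $t$ is equivalent to the exchange transformation in degree $0$ being an isomorphism on all base changes. Item (iv) is then a standard consequence: geometrically reduced fibres make $f_*\mathcal{O}_X$ commute with arbitrary base change by the results of \cite[III\textsubscript{2}, 7.8]{ega}; concretely one can see this through the Stein factorization, which after étale localization on $T$ becomes finite étale, so that $f_*\mathcal{O}_X$ is a finite locally free $\mathcal{O}_T$-algebra whose formation is obviously compatible with base change. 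Item (vi) is the Grauert-type semicontinuity criterion applied to $K^\bullet$: over reduced $T$, local constancy of $t\mapsto\dim_{k(t)}H^0(X_t,\mathcal{O}_{X_t})$ forces $H^0(K^\bullet)$ to be locally free with universal base change, and conversely.

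Item (vii) exploits the DVR structure specifically. Let $\pi$ be a uniformizer. Applying $Rf_*$ to the short exact sequence $0\to\mathcal{O}_X\xrightarrow{\pi}\mathcal{O}_X\to\mathcal{O}_{X_t}\to 0$ yields a long exact sequence whose connecting map identifies the cokernel of $(f_*\mathcal{O}_X)\otimes k(t)\to H^0(X_t,\mathcal{O}_{X_t})$ with $H^1(X,\mathcal{O}_X)[\pi]$. At the generic point $u$ the exchange is automatic by flat base change, so by (v) cohomological flatness is equivalent to the vanishing of $H^1(X,\mathcal{O}_X)[\pi]$; since $H^1(X,\mathcal{O}_X)$ is finitely generated over the DVR $\mathcal{O}(T)$, this vanishing is equivalent to torsion-freeness. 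The main obstacle, if any, is keeping the various reductions uniform across the items; conceptually nothing here goes beyond the bounded-complex presentation of $Rf_*\mathcal{O}_X$ and judicious appeals to Nakayama and flat base change.
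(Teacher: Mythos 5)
Your proposal is correct and takes essentially the same route as the paper: reduce to the affine noetherian case, handle (i)--(iii) formally (definition, faithfully flat descent, affine/finite base change), quote the EGA III\textsubscript{2}, 7.7--7.8 cohomology-and-base-change results for (iv)--(v), and deduce (vi)--(vii) from (v), your long-exact-sequence identification of the cokernel with $H^1(X,\mathcal{O}_X)[\pi]$ being exactly the intended argument for (vii). One small caveat: the aside in (iv) that the Stein factorization ``becomes finite \'etale after \'etale localization'' already presupposes control of $f_*\mathcal{O}_X$, so the actual justification should remain the cited EGA III\textsubscript{2}, 7.8.6, as in the paper.
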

\begin{proof}
Under our hypotheses on $f$, (i) follows from the definition (cf. \cite[\S 8.1, p. 206]{neronmodels}) and implies (ii) via flat base change. For (iii), use (i) and affine base change. To check the remaining assertions, by (i) we may assume $T$ is affine and then, by a limit argument (cf. \cite[IV\textsubscript{3}]{ega}), further reduce to the case $T$ is noetherian. In this case, (iv) is shown in \cite[III\textsubscript{2}, 7.8.6]{ega}, (v) in \cite[III\textsubscript{2}, 7.7.10]{ega}, and (vi)--(vii) follow from (v).
\end{proof}

\subsection{Converse to Artin's theorem}
We show that the converse to Artin's Theorem \ref{artin} holds, providing a criterion for cohomological flatness (cf. \cite[\S 8.3, Rmk. 2]{neronmodels} for the case $T$ reduced). Recall that the relative Picard functor $\Picsch_{X/T}$ is the fppf sheafification of the functor on $T$-schemes $S\rightsquigarrow\Pic(X\times_TS)$. Since $X/T$ is proper, $\Picsch_{X/T}$ is also the \'etale sheafification of this functor (\cite[\S 8.1, p. 203]{neronmodels}).

\begin{proposition}\label{artinconverse}
If $\Picsch_{X/T}$ is representable by an algebraic space, then $f$ is cohomologically flat.
\end{proposition}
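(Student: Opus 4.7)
The plan is to extract the required base-change compatibility of $f_*\mathcal{O}_X$ from that of the cotangent sheaf of $\Picsch_{X/T}$ at the identity section, combined with cohomology and base change.

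First, by Proposition \ref{basechange}(ii) and a standard limit argument I would reduce to the case $T = \Spec A$ noetherian, so that by (v) of that proposition it suffices to verify that $(f_*\mathcal{O}_X)_t \to H^0(X_t, \mathcal{O}_{X_t})$ is surjective for each $t \in T$. Let $e \colon T \to \Picsch_{X/T}$ be the identity section. Since $\Picsch_{X/T}$ is representable by an algebraic space locally of finite presentation, its cotangent module $\omega := e^*\Omega^1_{\Picsch_{X/T}/T}$ is a coherent $\mathcal{O}_T$-module whose formation commutes with arbitrary base change, and the functor $S \mapsto \Hom_{\mathcal{O}_S}(\omega|_S, \mathcal{O}_S)$ represents the kernel of $\Picsch_{X/T}(S[\epsilon]) \to \Picsch_{X/T}(S)$ at $e$.

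Next, a deformation-theoretic computation based on the short exact sequence $0 \to \epsilon\mathcal{O}_{X_S} \to \mathcal{O}_{X_{S[\epsilon]}}^* \to \mathcal{O}_{X_S}^* \to 1$ and the Leray spectral sequence for $f_S \colon X_S \to S$ identifies this tangent functor (for $S$ affine over $T$) with $S \mapsto H^1(X_S, \mathcal{O}_{X_S})$. The upshot is that the functor $S \mapsto H^1(X_S, \mathcal{O}_{X_S})$ is represented by the linear space associated to $\omega$, which forces $R^1 f_*\mathcal{O}_X$ to be locally free and compatible with base change, essentially by uniqueness of the coherent complex on $T$ computing $Rf_*\mathcal{O}_X$ up to quasi-isomorphism (\cite[III\textsubscript{2}, 7.7.5]{ega}).

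Finally, I would invoke the theorem on cohomology and base change \cite[III\textsubscript{2}, 7.8.4]{ega}: if $R^i f_*\mathcal{O}_X$ is locally free at $t$ and the comparison map $R^i f_*\mathcal{O}_X \otimes k(t) \to H^i(X_t, \mathcal{O}_{X_t})$ is an isomorphism, then the comparison map for $R^{i-1} f_*\mathcal{O}_X$ at $t$ is surjective. Taking $i = 1$ yields surjectivity of $(f_*\mathcal{O}_X) \otimes k(t) \to H^0(X_t, \mathcal{O}_{X_t})$ at every $t \in T$, which is cohomological flatness by Proposition \ref{basechange}(v).

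The main obstacle is the middle step: while at a field-valued point $t$ one has $T_e \Picsch_{X_t/k(t)} = H^1(X_t, \mathcal{O}_{X_t})$ by a direct calculation, the sheafification implicit in the definition of $\Picsch_{X/T}$ means that identifying the tangent functor with $S \mapsto H^1(X_S, \mathcal{O}_{X_S})$ uniformly in $S$, and propagating this to the full base-change statement needed to conclude local freeness of $R^1 f_*\mathcal{O}_X$, requires careful tracking of the Leray spectral sequence for $\mathbb{G}_m$. The final cohomology-and-base-change cascade is then a formal consequence of the standard coherent-cohomology machinery.
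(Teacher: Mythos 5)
Your argument has a genuine gap at exactly the step you flag as "the main obstacle" and then assert anyway. From the sequence $0\to\epsilon\mathcal{O}_{X_S}\to\mathcal{O}_{X_{S[\epsilon]}}^*\to\mathcal{O}_{X_S}^*\to 1$, the presheaf-level kernel of $\Pic(X_{S[\epsilon]})\to\Pic(X_S)$ is not $H^1(X_S,\mathcal{O}_{X_S})$ but its quotient by the image of the connecting map from $H^0(X_S,\mathcal{O}_{X_S}^*)$, i.e.\ by the obstruction to lifting units through the square-zero thickening --- which is precisely the failure of cohomological flatness that you are trying to rule out; and after the fppf/\'etale sheafification implicit in $\Picsch_{X/T}$ there are further correction terms coming from $H^1(S,f_{S*}\mathbb{G}_m)$ and $H^2(S,f_{S*}\mathbb{G}_m)$ in the Leray sequence, whose behaviour under the thickening again depends on $f_*\mathcal{O}_{X_{S[\epsilon]}}$. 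The identification $\mathrm{Lie}\,\Picsch_{X/T}\cong R^1f_*\mathcal{O}_X$ is a theorem only \emph{under} the hypothesis of cohomological flatness (cf.\ \cite[8.4/1]{neronmodels}), so invoking it to prove cohomological flatness is circular unless you supply an independent argument, which you do not. There is also a secondary problem in the concluding step: even granting the identification of the tangent functor with $M\mapsto H^1(X,\mathcal{O}_X\otimes f^*M)\cong\Hom_{\mathcal{O}_T}(\omega,M)$, the correct deduction is that this half-exact $\delta$-functor is left exact, hence $M\mapsto H^0(X,\mathcal{O}_X\otimes f^*M)$ is right exact, which is cohomological flatness; your detour through local freeness of $R^1f_*\mathcal{O}_X$ is insufficient as stated, since the cohomology-and-base-change cascade needs the degree-one comparison map at $t$ to be an isomorphism in addition to local freeness of $R^1f_*\mathcal{O}_X$, and neither follows from local freeness alone.

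For contrast, the paper's proof avoids deformation theory entirely: after reducing to $T=\Spec(A)$ strictly henselian local with residue field $k$, it introduces $C=\cok\bigl(H^0(X,\mathcal{O}_X^*)\to H^0(X_k,\mathcal{O}_{X_k}^*)\bigr)$, shows that $C=0$ forces surjectivity of $H^0(X,\mathcal{O}_X)\to H^0(X_k,\mathcal{O}_{X_k})$ (via idempotents and Zariski connectedness), and identifies $C$ with $\ker\bigl(P(B)\to P(A)\times P(A)\bigr)$ for $B=A\times_kA$ using Ferrand's gluing of line bundles along the closed fibre; representability then gives $P(B)=P(A)\times_{P(k)}P(A)$, so $C=0$. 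If you want to salvage your route, you would have to prove that representability alone forces the tangent-space identification, and that is essentially as hard as the proposition itself.
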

\begin{proof}
By Proposition \ref{basechange} (v), it suffices to show that the canonical map $H^0(X,\mathcal{O}_X)\to H^0(X_k,\mathcal{O}_{X_k})$ is surjective when $T=\Spec(A)$ is the spectrum of a local ring with residue field $k$. Moreover, by Proposition \ref{basechange} (ii), we may assume $A$ to be strictly henselian. Define
\[ C:=\cok(H^0(X,\mathcal{O}_{X}^*)\to H^0(X_k,\mathcal{O}_{X_k}^*)) \]

\begin{lemma}
If $C=0$, then $H^0(X,\mathcal{O}_X)\to H^0(X_k,\mathcal{O}_{X_k})$ is surjective.
\end{lemma}
\begin{proof}
We first show that $H^0(X,\mathcal{O}_X)\to H^0(X_k,\mathcal{O}_{X_k})$ induces a bijection on the finite sets of idempotents of each ring. For $A$ noetherian, this follows from Zariski's connectedness theorem (\cite[III\textsubscript{1}, 4.3.3]{ega}), since $A$ is henselian. In general, write $A$ as a direct limit of local henselian noetherian subalgebras. By \cite[IV\textsubscript{3}, 8.9.1]{ega}, we can assume that $X$ is the base change of a proper scheme over such a subalgebra, and then deduce the statement by taking the limit.

Now, write $H^0(X_k,\mathcal{O}_{X_k})=\oplus_{i=1}^nR_i$ with $R_i$ finite local $k$-algebras. Since $k$ is infinite, given $(a_1,...,a_n)\in H^0(X_k,\mathcal{O}_{X_k})$ there are $b_1,..,b_n\in A^*$ such that the residue classes $\bar{a}_i\neq\bar{b}_i$ in the reduction of $R_i$ for $1\leq i\leq n$. Then $(a_1-\bar{b}_1,...,a_n-\bar{b}_n)$ is a unit, hence, since $C=0$, lifts to $(c_1,...,c_n)\in H^0(X,\mathcal{O}_X^*)$ and $(c_1+b_1,...,c_n+b_n)$ provides a lift of $(a_1,...,a_n)$.
\end{proof}

So it suffices to show $C=0$. Consider the fibre product $B:=A\times_kA$. Write $P=\Picsch_{X/T}$ and consider the diagonal map $P(B)\to P(A)\times P(A)$.

\begin{lemma}
$\ker(P(B)\to P(A)\times P(A))\simeq C$
\end{lemma}
\begin{proof}
Consider the category $\mathcal{C}$ of triples $(\mathcal{E}_1,\mathcal{E}_2,s)$, where $\mathcal{E}_i$ ($i=1,2$) are vector bundles on $X$ and $s:\mathcal{E}_1\otimes_Ak\to\mathcal{E}_2\otimes_Ak$ is an isomorphism. There is a natural functor from the category of vector bundles on $X_B$ to $\mathcal{C}$ and it is an equivalence by \cite[2.2]{ferrand}, with an inverse given by
\[ \mathcal{E}=\{(e_1,e_2)\in\mathcal{E}_1\times\mathcal{E}_2:s(\overline{e}_1)=\overline{e}_2\} \]
where, for $e_i\in\mathcal{E}_i$, $\overline{e}_i\in\mathcal{E}_i\otimes_Ak$ is its image. It follows that a line bundle $\mathcal{E}$ on $X_B$ with trivial restrictions $\mathcal{E}_i\simeq\mathcal{O}_X$ is trivial if and only if $s\in\text{Isom}(\mathcal{E}_1\otimes_Ak,\mathcal{E}_2\otimes_Ak)\simeq H^0(X_k,\mathcal{O}_{X_k}^*)$ lifts to $H^0(X,\mathcal{O}_X^*)$. This implies the assertion.
\end{proof}

To complete the proof of Proposition \ref{artinconverse}, note that since $P$ is representable we have $P(B)=P(A)\times_{P(k)}P(A)$ (\cite[II]{grothpicard}). Thus, the lemma implies $C=0$.
\end{proof}

\section{Numerical criteria}\label{numericalsection}
$f:X\to T$ a morphism of noetherian schemes. In this section, $T$ will be the spectrum of a discrete valuation ring and we will use the notation of \S\ref{conventions}. 

\subsection{Multiplicities}\label{multiplicity}
Assume $f$ is of finite type with $X_t\neq\emptyset$ and let $\eta\in X_t$ be a maximal point. The \emph{(apparent) multiplicity $d_{\eta}$ of $\eta$ in $X_t$} is, by definition, the length of the artinian local ring $\mathcal{O}_{X_t,\eta}$.

Let $\bar{t}:=\Spec(\overline{k(t)})$ be the spectrum of an algebraic closure of $k(t)$ and $\bar{\eta}\in X_t\times_t\bar{t}$ a point lying over $\eta$. The \emph{geometric multiplicity $\delta_\eta$ of $\eta$ in $X_t$} is defined $\delta_{\eta}:=d_{\bar{\eta}}$ \cite[\S 9.1, Def. 3]{neronmodels}. By \cite[\S 9.1, Lemma 4]{neronmodels}, it is equal to the \emph{geometric length} of $\mathcal{O}_{X_t}$ at $\eta$, as defined in \cite[IV\textsubscript{2}, 4.7.5]{ega}.

The \emph{(apparent) multiplicity} $d_f$ (resp. \emph{geometric multiplicity} $\delta_f$) of $X_t$ is the gcd of the integers $d_\eta$ (resp. $\delta_{\eta}$) as $\eta$ ranges over the finite set of maximal points of $X_t$.

\begin{lemma}\label{coprime}
\begin{enumerate}[label=\normalfont(\roman*)]
\item $\delta_\eta$ is independent of the choice of algebraic closure of $k(t)$ and $\bar{\eta}$.
\item $(\delta_f,p)=1$ if and only if there is an irreducible component of $X_t$ whose apparent multiplicity is prime to $p$ and whose reduction is geometrically reduced.
\item If $f$ is flat and $X$ is regular at a maximal point $\eta\in X_t$, then $d_{\eta}$ is equal to the valuation at $\eta$ of a uniformizer of $\mathcal{O}(T)$. 
\item Assume $f$ is flat and $X$ is regular at the maximal points of its special fibre. If $g:T\to S$ is a finite flat morphism of spectra of discrete valuation rings, then $d_{g\circ f}=ed_f$ and $\delta_{g\circ f}=e\delta_f$, where $e$ is the ramification index of $g$.
\item Let $h:X'\to X$ be a proper birational morphism of flat $T$-schemes which are regular at the maximal points of their special fibres. Then $\delta_{f\circ h}|\delta_f$.
\item If $S\to T$ is a morphism of spectra of discrete valuation rings such that $S_t$ is the spectrum of a separable extension of $k(t)$, then $d_f=d_{f\times_TS}$ and $\delta_f=\delta_{f\times_TS}$.
\end{enumerate}
\end{lemma}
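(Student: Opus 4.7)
The plan is to handle each assertion by reducing, via the flatness and regularity hypotheses, to length and valuation computations on the DVR local rings $\mathcal{O}_{X,\eta}$ at maximal points $\eta$ of $X_t$, together with the identity
\[ \delta_\eta=d_\eta\cdot [k(\eta):k(t)]_i, \]
where $[k(\eta):k(t)]_i$ denotes the inseparability degree of $k(\eta)/k(t)$ (a power of $p$). This identity is just the length of $\mathcal{O}_{X_t,\eta}\otimes_{k(t)}\overline{k(t)}$ at a point above $\eta$, after noting that $k(\eta)\otimes_{k(t)}\overline{k(t)}$ decomposes as a product of copies of $\overline{k(t)}$, each factor of length $[k(\eta):k(t)]_i$. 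Assertion (i) follows from the $k(t)$-isomorphism of any two algebraic closures, together with the fact that points of $X_{\bar t}$ above $\eta$ are conjugate under $\mathrm{Aut}(\overline{k(t)}/k(t))$, giving isomorphic local rings. Assertion (ii) is immediate from the identity: $(\delta_\eta,p)=1$ iff $(d_\eta,p)=1$ and $[k(\eta):k(t)]_i=1$, the latter being equivalent to $k(\eta)/k(t)$ being separable, i.e., to the reduced component with generic point $\eta$ being geometrically reduced; the claim for $\delta_f$ follows by taking gcds.

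For (iii), flatness gives $\dim\mathcal{O}_{X,\eta}=\dim\mathcal{O}_{X_t,\eta}+1=1$, so regularity makes $\mathcal{O}_{X,\eta}$ a DVR, with $d_\eta=\ell(\mathcal{O}_{X,\eta}/\pi_T)=v_\eta(\pi_T)$. Assertion (iv) then follows from the relation $v_\eta(\pi_S)=e\cdot v_\eta(\pi_T)$ in $\mathcal{O}_{X,\eta}$, the $\delta$-version being obtained by tracking inseparability degrees through the residue field tower $k(s)\subset k(t)\subset k(\eta)$ via the identity above. For (vi), the hypothesis makes $S\to T$ essentially \'etale, and the special fibre of $f\times_TS$ is $X_t\otimes_{k(t)}k(s')$; since $k(\eta)\otimes_{k(t)}k(s')$ is a reduced product of fields $k(\eta_i')$ with $k(\eta_i')/k(\eta)$ separable, the resulting DVR local rings $\mathcal{O}_{X_S,\eta_i'}$ satisfy $d_{\eta_i'}=d_\eta$, and preservation of $\delta$ follows from $[k(\eta_i'):k(s')]_i=[k(\eta):k(t)]_i$ (using that both $k(\eta_i')/k(\eta)$ and $k(s')/k(t)$ are separable).

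The main obstacle is (v), where one must deal with maximal points $\eta''$ of $X'_t$ whose image $h(\eta'')$ in $X$ is not a maximal point of $X_t$. The key step is Zariski's main theorem: since each maximal point $\eta$ of $X_t$ has DVR local ring on $X$ by (iii), the proper birational morphism $h$ is an isomorphism in a Zariski neighborhood of $\eta$, producing a unique preimage $\eta'\in X'_t$ with $\mathcal{O}_{X',\eta'}\cong\mathcal{O}_{X,\eta}$; in particular $\eta'$ is a maximal point of $X'_t$ (by flatness of $X'/T$) with $\delta_{\eta'}=\delta_\eta$. Hence $\delta_{f\circ h}=\gcd_{\eta''}\delta_{\eta''}$ divides $\delta_{\eta'}=\delta_\eta$ for every maximal point $\eta$ of $X_t$, and so divides $\delta_f=\gcd_\eta\delta_\eta$; the ``extra'' maximal points $\eta''$ of $X'_t$ are irrelevant for this divisibility argument.
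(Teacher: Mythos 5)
Your overall strategy---reduce everything to the DVR local rings $\mathcal{O}_{X,\eta}$, use the decomposition $\delta_\eta=d_\eta\cdot\lambda_\eta$ with $\lambda_\eta$ a power of $p$ measuring inseparability of $k(\eta)/k(t)$, valuations for (iii)--(iv), and the valuative criterion for (v)---is essentially the paper's (which delegates the decomposition to the cited results of EGA IV, 4.7 and BLR, 9.1). Parts (i), (iii) and (v) are fine; in (v) your unique lift $\eta'$ with $\mathcal{O}_{X',\eta'}=\mathcal{O}_{X,\eta}$ and the observation that the extra maximal points of $X'_t$ are harmless is exactly the paper's argument. However, your justification of the central identity is wrong as written: you claim $k(\eta)\otimes_{k(t)}\overline{k(t)}$ decomposes as a product of copies of $\overline{k(t)}$, each of length $[k(\eta):k(t)]_i$. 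This only makes sense when $k(\eta)/k(t)$ is algebraic, i.e.\ when the component $\overline{\{\eta\}}$ is zero-dimensional; in the typical case $k(\eta)$ is transcendental over $k(t)$, the ring $k(\eta)\otimes_{k(t)}\overline{k(t)}$ is not artinian, and ``the inseparability degree of $k(\eta)/k(t)$'' has no classical meaning. The correct invariant is the geometric (radicial) multiplicity of the extension: the length of $(k(\eta)\otimes_{k(t)}\overline{k(t)})_{\mathfrak{q}}$ at a minimal prime $\mathfrak{q}$, which is a power of $p$ and equals $1$ exactly when $k(\eta)/k(t)$ is separable, i.e.\ when the reduced component is geometrically reduced. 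Since (ii) rests entirely on this identity, you need either this corrected formulation or the references the paper invokes for it.

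The $\delta$-claim in (iv) is also not closed by your sketch. Honest ``tracking through the tower $k(s)\subset k(t)\subset k(\eta)$'' yields $\delta_\zeta=e\,d_\eta\cdot\lambda\bigl(k(\eta)/k(s)\bigr)$, while $e\delta_\eta=e\,d_\eta\cdot\lambda\bigl(k(\eta)/k(t)\bigr)$; these coincide only if the residue extension $k(t)/k(s)$ contributes no inseparability, and nothing in the hypotheses of (iv) rules that out. The paper instead asserts that the ratio $\delta/d$ is an invariant of the reduced special fibre (citing EGA IV, 4.7.8), using that $X_t$ and $X_s$ have the same reduction; your bookkeeping, taken literally, does not reproduce this and in fact exposes the one genuinely delicate point of the lemma, so you must either invoke that invariance or isolate the separability of $k(t)/k(s)$ as an assumption. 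Two minor points on (vi): the hypothesis does not make $S\to T$ essentially \'etale (it may be ramified, or not of finite type, e.g.\ a completion), and $k(\eta)\otimes_{k(t)}k(s')$ need not be a product of fields when $k(s')/k(t)$ is transcendental---but the length computation at the minimal primes, which is what you actually use, is correct.
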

\begin{proof}
(i) follows from \cite[IV\textsubscript{2}, 4.7.8]{ega}, (ii) from \cite[\S 9.1, Lemma 4]{neronmodels} and \cite[IV\textsubscript{2} 4.7.10]{ega}, and (iii) is a straighforward check.

For (iv), let $s\in S$ denote the closed point. Note that $X_t\to X_s$ is a closed immersion and a homeomorphism. If $\eta\in X_t$ is a maximal point and $\zeta\in X_s$ its image, then $\mathcal{O}_{X,\eta}=\mathcal{O}_{X,\zeta}$. Computing valuations and using (iii), one sees that $d_\zeta=ed_\eta$, where $e$ is the ramification index of $T\to S$. Since $X_t$ and $X_s$ have the same reduction, by \cite[IV\textsubscript{2}, 4.7.8]{ega} we have $\delta_{\zeta}/d_{\zeta}=\delta_{\eta}/d_{\eta}$. Thus, $\delta_{\zeta}=e\delta_{\eta}$, hence $\delta_{g\circ f}=e\delta_f$ and the claim follows.

Finally, (v) holds because, by the valuative criterion for properness, every maximal point of $X_t$ lifts to a unique maximal point of $X'_t$, and (vi) follows from the definitions.
\end{proof}

\subsection{Raynaud's condition (N)}
Raynaud stated Theorem \ref{raynaudintro} under a condition he called (N). Recall that this condition states:
\begin{equation}\label{N}
\text{$X_t$ is $S_1$ and $X$ is regular at the maximal points of $X_t$}\tag{N}
\end{equation}
For instance, this holds if $X$ is normal. The next result, due to Koll\'ar, provides a useful characterization of condition (N). For its statement, let $u\in T$ be the generic point and $j:X_u\to X$ be the canonical map. We say that \emph{$X$ integrally closed in $X_u$} if $X$ is integrally closed in $j_*\mathcal{O}_{X_u}$ (in the sense of \cite[II, 6.3]{ega}).

\begin{proposition}[\cite{kollar}]\label{kollar}
The following are equivalent
\begin{enumerate}[label=\normalfont(\roman*)]
\item $f$ is flat and satisfies (N)
\item $X$ is integrally closed in $X_u$
\item $X_t\subset X$ is a normal pair (cf. \cite{kollar}).
\end{enumerate}
\end{proposition}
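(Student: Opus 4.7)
My plan is to establish (i) $\Leftrightarrow$ (ii) by a stalkwise analysis and then identify (iii) as Kollár's packaging of the same condition. The key technical input is that for a flat local $R$-algebra $A$ with $\pi\in R$ a uniformizer, $\pi$ is a non-zero-divisor on $A$, and one has the depth and dimension formulas $\depth A=\depth(A/\pi A)+1$ and $\dim A=\dim(A/\pi A)+1$. From these I read off that, under flatness, condition (N) translates into the assertion that every $A=\mathcal{O}_{X,x}$ with $x\in X_t$ is $S_2$ and regular at every height-one prime containing $\pi$; by Serre's normality criterion, $A$ is then normal at every prime containing $\pi$.

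For (ii) $\Rightarrow$ (i), integral closedness incorporates the injectivity $\mathcal{O}_X\hookrightarrow j_*\mathcal{O}_{X_u}$, which forces $\pi$ to be a non-zero-divisor on every $\mathcal{O}_{X,x}$, hence $f$ is flat. Localizing at a maximal point $\eta$ of $X_t$, flatness and $\dim A_\eta=1$ together force $A_\eta$ to have no embedded primes, so $A_\eta[\pi^{-1}]$ coincides with its total ring of fractions; integral closedness then makes $A_\eta$ a normal one-dimensional local ring, i.e., a DVR, giving the regularity half of (N). For the $S_1$ half I would argue by contradiction: an embedded associated prime of $A/\pi A$ at some $x\in X_t$ would, via the depth formula, witness a failure of the $S_2$ property at a point of $V(\pi)$; passage to the $S_2$-ification then extracts an element of $A[\pi^{-1}]$ integral over $A$ but not lying in $A$, contradicting (ii). The $S_2$-ification step is the main delicate point, but its existence and finiteness are standard for noetherian rings that arise as local rings of finite-type schemes over a DVR.

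For (i) $\Rightarrow$ (ii), given a stalk section $s\in(j_*\mathcal{O}_{X_u})_x=A[\pi^{-1}]$ integral over $A$, I note $s\in A_\mathfrak{q}$ at every height-one prime $\mathfrak{q}$ of $A$: automatically if $\mathfrak{q}\not\supset\pi$, and by the DVR property at maximal points of $X_t$ if $\mathfrak{q}\supset\pi$. The $S_2$-property of $A$ along $V(\pi)$, extracted from (N) via the depth formula, then yields $s\in A$ by the standard Hartogs extension argument (equivalently, vanishing of $\mathcal{H}^1_Z(\mathcal{O}_X)$ for $Z$ of codimension $\geq 2$ in an $S_2$-scheme). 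Finally, (iii) is Kollár's packaging of our equivalent local form of (i): the pair $X_t\subset X$ is a normal pair precisely when $X$ is $S_2$ at points of $X_t$, $X_t$ is $S_1$, and $X$ is regular at the codimension-one points of $X$ contained in $X_t$; invoking the definition from \cite{kollar}, the equivalence with (i) and (ii) is then formal.
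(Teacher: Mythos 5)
Your plan (prove (i)$\Leftrightarrow$(ii) stalkwise at points of $X_t$, leave (iii) to Koll\'ar's definitions) is viable, and much of it checks out: injectivity of $\mathcal{O}_X\to j_*\mathcal{O}_{X_u}$ does give flatness; at a maximal point $\eta$ of $X_t$ one does get $\mathcal{O}_{X,\eta}[\pi^{-1}]=Q(\mathcal{O}_{X,\eta})$ and hence a DVR (modulo the routine point that a one-dimensional local ring integrally closed in its total ring of fractions is a domain: a nilpotent $a$ would give integral elements $a/\pi^n\notin A$ by Krull intersection); and your (i)$\Rightarrow$(ii) extension step works because the locus where an integral $s\in A[\pi^{-1}]$ fails to lie in the local rings is contained in $V(\pi)$, where flatness plus $S_1$ of $X_t$ give $\depth\geq 2$ in codimension $\geq 2$ --- note this depth bound along $V(\pi)$ is all you get: your assertion that $A$ is $S_2$ and, via Serre, \emph{normal} at every prime containing $\pi$ is false (a flat family of nodal curves satisfies (i), yet the total space is not normal at the point of $X_t$ under the node), but you never use it. The genuine gap is the step you flag yourself: extracting, from an embedded prime of $A/\pi A$, an element of $A[\pi^{-1}]$ integral over $A$ and not in $A$ by ``passage to the $S_2$-ification''. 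First, the finiteness you invoke is not available in the needed generality: (ii) imposes nothing on the generic fibre, so the local ring at the bad point can fail to be equidimensional (e.g.\ a flat surface and a flat curve through the same point of $X_t$), and then the intersection of the height-one localizations is not a finite module and $\mathrm{End}$ of the canonical module does not even contain the ring; the standard finiteness statements for $S_2$-ifications require unmixedness hypotheses you have not secured. Second, and independently, even granting a finite $S_2$ extension $A\subseteq A'\subseteq Q(A)$, you give no reason why a new element of $A'$ can be chosen with denominator a power of $\pi$; only an element of $A[\pi^{-1}]$ contradicts (ii).

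The statement you need is true and has a classical elementary proof, which should replace the $S_2$-ification appeal. Localize at the embedded prime, so $(B,\mathfrak{m})$ is local, $\pi\in\mathfrak{m}$ is a non-zero-divisor, $\mathfrak{m}$ is associated to $B/\pi B$, and $\dim B\geq 2$. Choose $a$ with $(\pi B:a)=\mathfrak{m}$ and set $s=a/\pi\in B[\pi^{-1}]\setminus B$, so that $s\mathfrak{m}\subseteq B$. If $s\mathfrak{m}\subseteq\mathfrak{m}$, then $\mathfrak{m}$ is a finitely generated faithful module over $B[s]$ (faithful because $\pi\in\mathfrak{m}$ is invertible in $B[\pi^{-1}]$), so the determinant trick shows $s$ is integral over $B$, contradicting (ii). If $s\mathfrak{m}\not\subseteq\mathfrak{m}$, then $s\mathfrak{m}=B$, which makes $\mathfrak{m}$ principal and forces $\dim B\leq 1$ by Krull's Hauptidealsatz, a contradiction. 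With this substitution your argument becomes a complete, self-contained proof of (i)$\Leftrightarrow$(ii), which is a genuinely different route from the paper: the paper simply identifies (ii) and (iii) with Koll\'ar's definitions and quotes \cite[Cor.~7]{kollar} for (i)$\Leftrightarrow$(iii). For (iii) you should do the same and just cite Koll\'ar, since your proposed reformulation of ``normal pair'' is not checked against his actual definition.
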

\begin{proof}
Condition (ii) means that $X$ is the \emph{relative normalization} of the pair $X_t\subset X$ in the sense of \cite[Def. 1]{kollar}, which is equivalent to (iii) by definition (cf. \cite[Def. 2]{kollar}). The equivalence of (i) and (iii) follows easily from \cite[Cor. 7]{kollar}.
\end{proof}

We gather some facts about (N) in the next couple of lemmata.

\begin{lemma}\label{complete}
Assume $f:X\to T$ is a morphism of finite type. Let $S\to T$ be a morphism of spectra of discrete valuation rings such that $S_t$ is the spectrum of a separable extension of $k(t)$. Then $f$ satisfies (N) if and only if $f\times_TS$ does.
\end{lemma}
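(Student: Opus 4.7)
The morphism $S\to T$ is a local map of spectra of discrete valuation rings, and the hypothesis that $S_t=\Spec(k(s))$ for a separable (not necessarily algebraic) extension $k(s)/k(t)$ forces $S\to T$ to be unramified. Consequently, on special fibres we have an identification $X_s=X_t\times_{k(t)}k(s)$ and the projection $X_s\to X_t$ is faithfully flat. The key property of separability I will exploit is that $\kappa\otimes_{k(t)}k(s)$ is reduced for every field extension $\kappa/k(t)$. My plan is to verify the two clauses of condition (N)---the $S_1$ property and regularity at maximal points---separately, and show each passes in both directions under the base change.

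For the $S_1$ clause, the fibres of $X_s\to X_t$ at a point $x\in X_t$ are $\Spec(\kappa(x)\otimes_{k(t)}k(s))$, which are reduced, hence $S_1$ (a reduced Noetherian ring has no embedded primes). Ascent of $S_n$ along flat morphisms with $S_n$ fibres gives $X_t$ is $S_1$ $\Rightarrow$ $X_s$ is $S_1$, and faithfully flat descent of $S_n$ gives the converse.

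For the regularity clause, faithful flatness together with flat going-down on minimal primes puts the maximal points of $X_s$ in surjection onto those of $X_t$, so it suffices to fix a maximal point $\eta\in X_t$ and a maximal point $\zeta\in X_s$ above $\eta$ and show that $A:=\mathcal{O}_{X,\eta}$ is regular iff $B:=\mathcal{O}_{X_S,\zeta}$ is. The map $A\to B$ is flat local, and by unramifiedness of $S/T$ the closed fibre $F=B/\mathfrak{m}_AB$ is a localization of $\kappa(\eta)\otimes_{\mathcal{O}_T}\mathcal{O}_S=\kappa(\eta)\otimes_{k(t)}k(s)$, hence reduced. On the other hand, the uniformizer of $T$ lies in $\mathfrak{m}_A$, so $\pi B\subset\mathfrak{m}_AB$ and $F$ is a quotient of the Artinian local ring $\mathcal{O}_{X_s,\zeta}$, in particular zero-dimensional. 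A zero-dimensional reduced local ring is a field, so $F$ is a field, hence regular. Ascent and descent of regularity along flat local homomorphisms then yield the desired equivalence of regularity of $A$ and $B$.

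The only point in this plan that requires genuine input is the identification of $F$ as a field: it rests on combining reducedness (from separability of $k(s)/k(t)$) with zero-dimensionality (from $\zeta$ being a maximal point of $X_s$). Once $F$ is known to be a field, the remaining ingredients are routine applications of flat ascent and descent.
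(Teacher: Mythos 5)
Your argument is correct. The paper itself gives no proof of this lemma---it simply cites Raynaud \cite[6.1.7]{raynaud}---so there is nothing internal to compare against; your write-up is a legitimate self-contained verification of the cited fact. The two reductions are sound: the hypothesis that $S_t$ is the spectrum of a field does force $\pi_T\mathcal{O}(S)=\mathfrak{m}_S$, so $X_s=X_t\times_{k(t)}k(s)$; the fibres of $X_s\to X_t$ and of $\mathcal{O}_{X,\eta}\to\mathcal{O}_{X_S,\zeta}$ are localizations of rings of the form $\kappa\otimes_{k(t)}k(s)$, which are reduced precisely because $k(s)/k(t)$ is separable in the sense of Bourbaki/EGA; and your identification of the closed fibre $F$ as a field (reduced plus zero-dimensional, the latter because $\zeta$ is a maximal point of $X_s$ and $\pi_T\in\mathfrak{m}_A$) is the one genuinely non-formal step, after which the standard ascent/descent statements for $S_1$ and regularity along flat local homomorphisms (\cite[IV, 6.4.1--6.4.2, 6.5.1--6.5.2]{ega}) finish the job. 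Two very minor quibbles: ``unramified'' is a slight abuse since $k(s)/k(t)$ need not be finite, but all you actually use is that $\pi_T$ is a uniformizer of $\mathcal{O}(S)$, which you justify correctly; and it is worth stating explicitly (as you implicitly do via going-down) that the maximal points of $X_s$ are exactly the points lying over maximal points of $X_t$, with every maximal point of $X_t$ hit, so that both directions of the regularity clause are covered.
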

\begin{proof}
See \cite[6.1.7]{raynaud}.
\end{proof}

\begin{lemma}\label{steinreduction}
Assume $f$ is proper, flat, and satisfies (N). Let $X\overset{f'}{\to} T'\to T$ be the Stein factorization of $f$. Then
\begin{enumerate}[label=\normalfont(\roman*)]
\item $T'$ is the spectrum of a one-dimensional semilocal normal ring
\item $f'$ is proper and flat
\item for each closed point $t'\in T'$, $f'\times_{T'}\Spec(\mathcal{O}_{T',t'})$ satisfies (N)
\item $f$ is cohomologically flat if and only if $f'$ is.
\end{enumerate}
\end{lemma}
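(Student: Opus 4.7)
I plan to invoke Kollár's Proposition \ref{kollar} to replace (N) with the equivalent condition that $X$ be integrally closed in $X_u$. Let $R := \mathcal{O}(T)$ with uniformizer $\pi$ and set $R' := f_*\mathcal{O}_X$. Since $f$ is proper, $R'$ is a finite $R$-algebra, and since $f$ is flat, $R'$ is torsion-free, hence free, over the DVR $R$. Moreover, $R'$ is integrally closed in $R'[1/\pi] = H^0(X_u, \mathcal{O}_{X_u})$, a finite product of finite field extensions of $k(u)$. So $R'$ is a finite product of one-dimensional normal semilocal domains, which gives (i). For (ii), $f'$ is proper because $f$ is and $T' \to T$ is finite (hence separated). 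Flatness of $f'$ can be checked pointwise on $T'$: it is trivial at generic points (where the local ring is a field), and at a closed point $t' \in T'$ the DVR $\mathcal{O}_{T', t'}$ has a uniformizer $\pi'$ satisfying $\pi = u(\pi')^e$ for some unit $u$ and integer $e \geq 1$, so the fact that $\pi$ is a non-zero-divisor on $\mathcal{O}_X$ (by flatness of $f$) implies that $\pi'$ is too. For (iii), $X'' := X \times_{T'} \Spec(\mathcal{O}_{T', t'})$ is a pro-open subscheme of $X$; its generic fibre over $\Spec(\mathcal{O}_{T', t'})$ is the component of $X_u$ lying above the corresponding factor of $H^0(X_u, \mathcal{O}_{X_u})$ in the Stein decomposition over $u$. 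Since integral closedness is a local property, $X''$ inherits it from $X$, so Proposition \ref{kollar} yields (N) for the base change.

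For (iv), the forward direction ($f'$ cohomologically flat implies $f$ cohomologically flat) is Proposition \ref{basechange}(iii) applied to the finite locally free morphism $T' \to T$. For the converse, I apply Proposition \ref{basechange}(ii) to the faithfully flat cover $\coprod_{t' \in T'} \Spec(\mathcal{O}_{T', t'}) \to T'$, reducing to cohomological flatness of each localization $f'_{t'} : X \times_{T'} \Spec(\mathcal{O}_{T', t'}) \to \Spec(\mathcal{O}_{T', t'})$ over a DVR. By Proposition \ref{basechange}(vii), this is equivalent to $H^1(X, \mathcal{O}_X)_{t'}$ being torsion-free over $\mathcal{O}_{T', t'}$. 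The relation $\pi = u(\pi')^e$ shows that a finitely generated $\mathcal{O}_{T', t'}$-module is torsion-free if and only if $\pi$ acts on it as a non-zero-divisor; taken over all $t' \in T'$, this amounts to $H^1(X, \mathcal{O}_X)$ being $R$-torsion-free, which by Proposition \ref{basechange}(vii) is equivalent to cohomological flatness of $f$.

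The main obstacle is in (i): proving that $R' = f_*\mathcal{O}_X$ is normal, which hinges on Kollár's reformulation of (N). In (iv), the delicate step is comparing cohomological flatness over the two different bases $\mathcal{O}(T)$ and $\mathcal{O}(T')$; this is reconciled cleanly via the equivalence of $\pi$- and $\pi'$-torsion-freeness coming from the ramification identity $\pi = u(\pi')^e$.
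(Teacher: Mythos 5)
Your argument reaches the right conclusions but by a genuinely different route from the paper's. The paper gets (i) by citing Raynaud's 6.1.8, checks (iii) directly from the definition of (N) via the depth inequality $\depth\mathcal{O}_{X,x}\geq\min\{2,\dim\mathcal{O}_{X,x}\}$ for $x\in X_t$ (which passes to the closed fibres of $f'$ with no discussion of which components of $X_u$ lie over which $t'$), and disposes of (iv) by invoking Proposition \ref{basechange}~(iii). Your systematic use of Koll\'ar's reformulation (Proposition \ref{kollar}) is a legitimate alternative, and your treatment of (iv) via torsion-freeness of $H^1$ is in fact more complete than the paper's citation, which on its face only covers the direction ``$f'$ cohomologically flat $\Rightarrow$ $f$ cohomologically flat''. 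Two structural remarks: your (iii) genuinely consumes (i), since you need $\mathcal{O}_{T',t'}$ to be a domain to know that the only components of $X_u$ whose closures meet $f'^{-1}(t')$ are those lying in the generic fibre of $X\times_{T'}\Spec(\mathcal{O}_{T',t'})$ --- this is what makes ``integral closedness is local'' correct when the generic fibre shrinks to one Stein component; the paper's depth argument avoids this entirely.

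The one genuine gap is in (i): you assert that $R'[1/\pi]=H^0(X_u,\mathcal{O}_{X_u})$ is a finite product of fields, i.e.\ reduced, but nothing in the hypotheses makes this immediate --- condition (N) constrains only the special fibre, and if $R'[1/\pi]$ were non-reduced then ``integrally closed in $R'[1/\pi]$'' would force $R'$ to contain the whole nilradical (nilpotents are integral over any subring), which is incompatible with normality. So the reducedness is precisely what must be proved before your chain of implications closes. Fortunately it follows in one line from what you already have: if $\nu\in R'[1/\pi]$ is nilpotent, then so is $\pi^{-m}\nu$ for every $m\geq 0$, hence $\pi^{-m}\nu$ is integral over $R'$ and therefore lies in $R'$, so $\nu\in\bigcap_{m}\pi^{m}R'=0$ because $R'$ is a finite free $R$-module. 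With that sentence inserted, your proof of (i), and with it the remaining parts, goes through.
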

\begin{proof}
For (i), see \cite[6.1.8]{raynaud} (and use that $f$ is flat). Note that it implies (ii). For (iii), first note that since $f$ is flat and $X_t$ is $S_1$, the inequality $\depth\mathcal{O}_{X,x}\geq\min\{2,\dim\mathcal{O}_{X,x}\}$ is satisfied for all $x\in X_t$. Since $f'$ is flat, it follows from this that its closed fibres are $S_1$. Moreover, $X$ is clearly regular at the maximal points of the closed fibres of $f'$ (since this holds for $f$), so (iii) follows. Finally, (iv) follows from Proposition \ref{basechange} (iii).
\end{proof}

\subsection{Divisorial cycles}
From now on we assume $f$ is flat and satisfies (N). 

If $X_t\neq\emptyset$, let $X_1,...,X_c$ denote its irreducible components and, for $1\leq i\leq c$, $\eta_i\in X_i$ the generic point. Then, $\mathcal{O}_{X,\eta_i}$ is regular of dimension 1; let $v_i$ be the discrete valuation of $\mathcal{O}_{X,\eta_i}$, with the usual convention $v_i(0)=+\infty$. Let $\mathbb{N}_{\infty}=\mathbb{N}\cup\{+\infty\}$ and $\mathbb{Z}_{\infty}=\mathbb{Z}\cup\{+\infty\}$, with the obvious additive monoid structures. Denoting by $j:X_u\to X$ be the inclusion of the generic fibre of $f$, $v_i$ extends to a morphism of Zariski sheaves $\tilde{v}_i:j_*\mathcal{O}_{X_u}\to\eta_{i,*}\mathbb{Z}_{\infty}$ whose stalk at $\eta_i$ is $v_i$.

Define a map of (Zariski) sheaves
\begin{align*}
\Div_X:j_*\mathcal{O}_{X_u} &\to \oplus_{1\leq i\leq c}\eta_{i,*}\mathbb{Z}_{\infty} \\
s &\mapsto (\tilde{v}_i(s))_{1\leq i\leq c}
\end{align*}
If $X_t=\emptyset$, then we let $\Div_X$ be the zero map.

\begin{lemma}\label{integrallyclosed}
For $U\subset X$ open, we have $\mathcal{O}_X(U)=\{s\in j_*\mathcal{O}_{X_u}(U):\Div_U(s)\geq 0\}$.
\end{lemma}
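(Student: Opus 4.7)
The containment $\mathcal{O}_X(U) \subseteq \{s : \Div_U(s) \geq 0\}$ is immediate from the definitions, so I will focus on the reverse inclusion. My plan is to reduce to stalks, where I can write $s = a/\pi^n$ for $\pi$ a uniformizer of $\mathcal{O}(T)$, and then induct on $n$ using the $S_1$ hypothesis from condition (N). Since $\mathcal{O}_X$ is a subsheaf of $j_*\mathcal{O}_{X_u}$, it suffices to show that for each $x \in U$, any germ $s \in (j_*\mathcal{O}_{X_u})_x$ with $v_i(s) \geq 0$ at every maximal point $\eta_i$ of $X_t$ specializing to $x$ lies in $\mathcal{O}_{X,x}$. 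Flatness of $f$ makes $\pi$ a non-zero-divisor on $\mathcal{O}_X$, identifies $X_u$ with $X \setminus V(\pi)$, and gives $(j_*\mathcal{O}_{X_u})_x = \mathcal{O}_{X,x}[\pi^{-1}]$, so I may write $s = a/\pi^n$ with $a \in \mathcal{O}_{X,x}$ and $n \geq 0$.

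For the inductive step ($n \geq 1$), Lemma \ref{coprime}(iii) (applicable because $X$ is regular at each $\eta_i$ by (N)) gives $v_i(\pi) = d_{\eta_i}$, so the hypothesis $v_i(s) \geq 0$ translates to $v_i(a) \geq n d_{\eta_i} \geq d_{\eta_i}$. This means the image $\bar a$ of $a$ in each localization $\mathcal{O}_{X_t,\eta_i}$ vanishes. The $S_1$ part of (N) identifies the associated primes of $\mathcal{O}_{X_t,x}$ with its minimal primes, namely those corresponding to the $\eta_i$ specializing to $x$; so the annihilator of $\bar a$ lies in no such minimal prime and hence, by prime avoidance over this finite set, contains a non-zero-divisor of $\mathcal{O}_{X_t,x}$. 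This forces $\bar a = 0$, so $a = \pi b$ with $b \in \mathcal{O}_{X,x}$, and $s = b/\pi^{n-1}$; the induction continues.

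The delicate point I anticipate is the $S_1$ step: one must use that $S_1$ means ``associated primes are minimal'' (rather than that $\mathcal{O}_{X_t}$ is reduced, which would fail in general) in order to upgrade the pointwise vanishing of $\bar a$ at the generic points of $X_t$ to its vanishing in $\mathcal{O}_{X_t,x}$. The remaining steps—the presentation of $(j_*\mathcal{O}_{X_u})_x$ via flatness, and passing from stalks back to sections over $U$—are essentially formal.
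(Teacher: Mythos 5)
Your proof is correct, but it takes a genuinely different route from the paper's. The paper first observes that the right-hand side is a sheaf $\mathcal{M}$ (a fibre product of sheaves), reduces to $X$ local, and then inducts on $\dim X$: at the closed point $x$ it uses flatness over the discrete valuation ring together with the $S_1$ hypothesis on $X_t$ to get $\depth\mathcal{O}_{X,x}\geq 2$, so sections of $\mathcal{O}_X$ extend across $x$ (a Hartogs-type depth argument), while $\mathcal{M}(X)=\mathcal{M}(X\setminus\{x\})$ trivially. You instead argue stalkwise: writing $s=a/\pi^{n}$ in $(j_*\mathcal{O}_{X_u})_x=\mathcal{O}_{X,x}[\pi^{-1}]$ and inducting on $n$, you use (N) in the form ``$X_t$ is $S_1$, hence $\mathcal{O}_{X_t,x}$ has no embedded primes'' to upgrade the vanishing of $\bar a$ at the generic points $\eta_i$ through $x$ to $\bar a=0$ in $\mathcal{O}_{X_t,x}$ (via prime avoidance, or equivalently the injectivity of $\mathcal{O}_{X_t,x}\to\prod_{\mathfrak p\in\mathrm{Ass}}(\mathcal{O}_{X_t,x})_{\mathfrak p}$), and then divide by $\pi$. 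All the ingredients you invoke are available: regularity at the $\eta_i$ gives the valuations and $v_i(\pi)=d_{\eta_i}$, flatness gives $\pi$-torsion-freeness and the stalk identification, and the passage from germs back to sections of the subsheaf is indeed formal. What each approach buys: yours is more elementary (no induction on dimension, no depth/extension step) and isolates exactly how $S_1$ enters; the paper's version packages the argument so that the auxiliary observation that $\mathcal{M}$ is a sheaf can be reused verbatim for the sheaves $\mathcal{O}(D)$ in Lemma \ref{rationalcycles}, and fits the lemma into the familiar depth-$\geq 2$ extension paradigm.
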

\begin{proof}
Let $\mathcal{M}$ be the presheaf $\mathcal{M}(U)=\{s\in j_*\mathcal{O}_{X_u}(U):\Div_U(s)\geq 0\}$. Then
\[ \mathcal{M}=(j_*\mathcal{O}_{X_u})\times_{(\oplus_i\eta_{i,*}\mathbb{Z}_{\infty})}(\oplus_i\eta_{i,*}\mathbb{N}_{\infty}) \]
so $\mathcal{M}$ is a sheaf and clearly $\mathcal{O}_X\subset\mathcal{M}$. To prove that this inclusion is an isomorphism we may assume $X$ is local, in particular, finite dimensional. For finite dimensional $X$, we prove the assertion by induction on $\dim X$, the case $\dim X=1$ being straightforward. So assume the result for flat noetherian $T$-schemes of dimension less than $\dim X\geq 2$ satisfying (N).

Let $x\in X$ be the closed point and $U=X\setminus\{x\}$. Then $\dim U<\dim X$, so by induction the result holds for $U$. By (N) and flatness over $T$, we have $\depth\mathcal{O}_{X,x}\geq 2$, hence $\mathcal{O}_X(U)=\mathcal{O}_{X}(X)$. Since $U$ contains all of the maximal points of $X_t$ and $U_u=X_u$, we have $\mathcal{M}(U)=\mathcal{M}(X)$ and the claim follows.
\end{proof}

An (integral) \emph{divisorial cycle supported on $X_t$} is, by definition, a global section of the sheaf $\oplus_{1\leq i\leq c}\eta_{i,*}\mathbb{Z}$. We will often denote by $[X_i]$ the element $1\in H^0(X,\eta_{i,*}\mathbb{Z})$ and write divisiorial cycles as formal sums $\sum_{i=1}^ca_i[X_i]$. The \emph{class group} of such cycles is defined
\[ C(X):=\cok\left(H^0(X_u,\mathcal{O}_{X_u}^*)
\overset{\Div_X}{\to}H^0(X,\oplus_{1\leq i\leq c}\eta_{i,*}\mathbb{Z})\right) \]
Define
\[ d'_f:=|C(X)[\text{tor}]| \]
where $C(X)[\text{tor}]\subset C(X)$ is the torsion subgroup. Note that our $d'_f$ differs from the integer $d'$ of \cite{raynaud}, but they are equal if $X$ is locally factorial at the points of $X_t$, cf. \cite[6.1.11 (1)]{raynaud}.

\begin{lemma}\label{torsioninvariant}
In addition to our standing hypotheses, assume $f$ is proper and $f_*\mathcal{O}_X=\mathcal{O}_T$.
\begin{enumerate}[label=\normalfont(\roman*)]
\item $d'_f=d_f$ is the (apparent) multiplicity of $X_t$ (\S\ref{multiplicity}).
\item Assume $X$ is locally factorial at the points of $X_t$. If $g:Y\to T$ is another proper flat $T$-scheme that is locally factorial at the points of $Y_t$ and such that $g_*\mathcal{O}_Y=\mathcal{O}_T$ and $Y_u\simeq X_u$, then $d_f=d_g$.
\end{enumerate}
\end{lemma}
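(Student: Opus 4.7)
The plan for (i) is to compute $C(X)$ explicitly. Since $f_*\mathcal{O}_X=\mathcal{O}_T$, flat base change yields $H^0(X_u,\mathcal{O}_{X_u})=k(u)$ and hence $H^0(X_u,\mathcal{O}_{X_u}^*)=k(u)^*$. A unit $a\in\mathcal{O}(T)^*$ pulls back to a global unit on $X$, so $\Div_X(a)=0$; and by Lemma \ref{coprime}(iii), a uniformizer $\pi\in\mathcal{O}(T)$ satisfies $\Div_X(\pi)=(d_1,\ldots,d_c)$, where $d_i:=v_i(\pi)$ is the apparent multiplicity at $\eta_i$. Writing $k(u)^*=\mathcal{O}(T)^*\cdot\pi^{\mathbb{Z}}$, the image of $\Div_X$ is the cyclic subgroup $\mathbb{Z}\cdot(d_1,\ldots,d_c)$, giving $C(X)\simeq\mathbb{Z}^c/\mathbb{Z}\cdot(d_1,\ldots,d_c)$. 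Setting $d_i=d_f\,e_i$ with $\gcd(e_i)=1$, the primitive vector $(e_1,\ldots,e_c)$ completes to a $\mathbb{Z}$-basis of $\mathbb{Z}^c$; hence $C(X)\simeq\mathbb{Z}/d_f\oplus\mathbb{Z}^{c-1}$, whose torsion has order $d_f$.

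For (ii), the plan is to introduce a common birational modification and compare multiplicities. I would form the scheme-theoretic closure $\Gamma\subset X\times_TY$ of the graph of the given isomorphism $Y_u\xrightarrow{\sim}X_u$, and let $Z\to\Gamma$ be its normalization. Then $Z$ is integral, normal, and flat over $T$ (integrality with a non-empty generic fibre over the Dedekind base forces flatness), and it satisfies (N): being normal, $Z$ is $R_1+S_2$, and $Z_t$, cut out by the non-zero-divisor $\pi$, is $S_1$. The projections $h_X\colon Z\to X$ and $h_Y\colon Z\to Y$ are proper, birational, and restrict to isomorphisms on generic fibres.

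I would then show that the multiplicity of $Z_t\to T$ equals both $d_f$ and $d_g$, yielding $d_f=d_g$. For a maximal point $\eta_i'\in Z_t$ whose image in $X$ is the generic point $\eta_i$ of $X_i$, the valuative criterion applied to the DVR $\mathcal{O}_{X,\eta_i}$ gives $\mathcal{O}_{X,\eta_i}\xrightarrow{\sim}\mathcal{O}_{Z,\eta_i'}$, so the multiplicity at $\eta_i'$ is $d_i$. For an exceptional maximal point $z\in Z_t$ mapping to a non-generic $x\in X_t$, local factoriality of $X$ at $x$ provides a factorization $\pi=u'\prod_jt_j^{d_j}$ with $u'\in\mathcal{O}_{X,x}^*$ and $t_j$ a local equation for the component $X_j$ passing through $x$. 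Pulling back gives $v_z(\pi)=\sum_jd_j\,v_z(h_X^*t_j)$, which is divisible by $d_f=\gcd_jd_j$. Hence every multiplicity on $Z_t$ is divisible by $d_f$ while the $d_i$ themselves occur, so the multiplicity of $Z_t$ is exactly $d_f$; symmetrically it is $d_g$.

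The step I expect to need the most care is the construction of $Z$: normalization does not guarantee local factoriality of $Z$ at $Z_t$ in dimension $\geq 3$. Fortunately this is not required, since the divisibility argument only uses local factoriality of the \emph{source} $X$ (resp.\ $Y$) at the image of each maximal point of $Z_t$.
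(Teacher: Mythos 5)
Your proof is correct in substance, and part (ii) takes a genuinely different route from the paper's. For (i) you and the paper both reduce to computing $\mathbb{Z}^c/\mathbb{Z}\cdot\Div_X(\pi)$; your completion of the primitive vector $(e_1,\dots,e_c)$ to a basis of $\mathbb{Z}^c$ is cleaner than the paper's identification of the torsion, which goes by tensoring with $\mathbb{Z}/p$ and a diagram chase. For (ii) the paper stays inside the class-group formalism: using local factoriality it identifies $\oplus_i\eta_{i,*}\mathbb{Z}$ with $j_*\mathcal{O}_{X_u}^*/\mathcal{O}_X^*$, extends the birational map to an open $U\subset X$ whose complement has codimension $\geq 2$, and shows via the resulting map of exact sequences that $\ker(C(Y)\to C(X))$ is torsion-free, so that $d'_g=d'_f$ and (ii) follows from (i). You instead compare apparent multiplicities directly through a common modification; your divisibility computation $v_z(\pi)=\sum_j d_j\,v_z(h_X^*t_j)\in d_f\mathbb{Z}$ at points where $\mathcal{O}_{X,x}$ is factorial is exactly right, and it makes no use of (i), which is a genuine simplification.

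The one step that needs repair is the construction of $Z$, as you anticipated, though for a different reason than the one you flag: over a non-excellent discrete valuation ring the normalization of $\Gamma$ need not be finite, so $Z$ may fail to be of finite type over $T$ and $h_X$ may fail to be proper; moreover $\Gamma$ need not be integral when $X_u$ is disconnected. But you do not need $Z$ at all. Apply the valuative criterion for $Y/T$ directly to each discrete valuation ring $\mathcal{O}_{X,\eta_i}$ to obtain a point $y\in Y_t$ with $\mathcal{O}_{Y,y}\subset\mathcal{O}_{X,\eta_i}$. If $y$ is a maximal point of $Y_t$, the two rings are discrete valuation rings of the same field, one dominating the other, hence equal, so $d_{\eta_i}$ occurs among the multiplicities of $Y_t$; otherwise your factorization $\pi=u'\prod_j t_j^{e_j}$ in the factorial ring $\mathcal{O}_{Y,y}$ gives $d_g\mid v_{\eta_i}(\pi)=d_{\eta_i}$. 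Either way $d_g\mid d_{\eta_i}$ for every $i$, hence $d_g\mid d_f$, and by symmetry $d_f\mid d_g$. This is your argument with the global object $Z$ replaced by the local rings where the valuations already live.
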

\begin{proof}
Note that $H^0(X_u,\mathcal{O}_{X_u}^*)=k(u)^*$, hence $C(X)=(\oplus_{1\leq i\leq c}\mathbb{Z})/\langle\Div_X(\pi)\rangle$. So $C(X)[\text{tor}]$ is a cyclic group; let $D=\sum_{i=1}^ca_i[X_i]$ be a generator. Consider the exact sequence
\[ 0\to\mathbb{Z}\overset{\cdot D}{\to} H^0(X,\oplus_{1\leq i\leq c}\eta_{i,*}\mathbb{Z})\to B\to 0 \]
Taking the tensor product with $\mathbb{Z}/p$ for a prime $p$ we obtain an exact sequence
\[ 0\to B[p]\to\mathbb{Z}/p\to\oplus_{1\leq i\leq c}\mathbb{Z}/p \]
Since $D$ is a generator of $C(X)[\text{tor}]$, we see that $B[p]=0$ if $p\mid d'_f$. Thus, $B[\text{tor}]$ is annihilated by an integer prime to $d'_f$, hence $B[\text{tor}]$ is a cyclic group of order $m$ with $(m,d'_f)=1$. Now, a diagram chase shows that $B[\text{tor}]\cong \langle\Div_X(\pi)\rangle/\langle d'_fD\rangle$, so $m\Div_X(\pi)=n d'_fD=\sum_ind'_fa_i[X_i]$ for some integer $n$. By Lemma \ref{coprime} (iii), we see that $d'_f$ divides all of the multiplicities of $X_t$, so $d'_f\mid d_f$. Conversely, by Lemma \ref{coprime} (iii), we can write $\Div_X(\pi)=d_f(\sum_ib_i[X_i])$ for some $b_i\in\mathbb{N}$, so $C(X)$ has an element of order $d_f$. Thus, $d_f\mid d'_f$ and this proves (i).

For (ii), first recall (\cite[\S1]{brauer2}) that, by local factoriality, the map $\Div_X$ induces an isomorphism
\[ j_*\mathcal{O}_{X_u}^*/\mathcal{O}_X^*\cong \oplus_{1\leq i\leq c}\eta_{i,*}\mathbb{Z} \]
In particular, the right hand side is functorial in (locally factorial) $X$ (without any further assumptions on $X$). Now, since $X$ is locally factorial, hence normal at the points of $X_t$ and $g$ proper, there is an open subset $U\subset X$ with $\codim_X(X\setminus U)\geq 2$ and a unique morphism $U\to Y$ extending the isomorphism on the generic fibres. Note that $H^0(U,\mathcal{O}_U)=H^0(X,\mathcal{O}_X)=\mathcal{O}(T)$ and $C(U)=C(X)$.

Let $Y_1,...,Y_d$ be the irreducible components of $Y_t$ and $\xi_i\in Y_i$ the generic points.  By the above, the morphism $U\to Y$ yields a commutative diagram with exact rows
\[ \begin{tikzcd}[column sep=small]
0 \arrow[r] & H^0(Y_u,\mathcal{O}_{Y_u}^*)/H^0(Y,\mathcal{O}_Y^*) \arrow[r] \arrow[d,equal] & H^0(Y,\oplus_{1\leq i\leq d}\xi_{i,*}\mathbb{Z})\arrow[r] \arrow[d] & C(Y) \arrow[r] \arrow[d] & 0 \\
0 \arrow[r] & H^0(X_u,\mathcal{O}_{X_u}^*)/H^0(U,\mathcal{O}_U^*) \arrow[r] & H^0(U,\oplus_{1\leq i\leq c}\eta_{i,*}\mathbb{Z})\arrow[r] & C(U) \arrow[r] & 0
\end{tikzcd} \]
thus, $\ker(C(Y)\to C(U)=C(X))$ is torsion free. So, $C(Y)[\text{tor}]\subset C(X)[\text{tor}]$ and $d'_g\mid d'_f$. Inverting the roles of $X$ and $Y$ shows $d'_f\mid d'_g$, hence $d'_f=d'_g$ and (ii) follows from (i).
\end{proof}

Let $D$ be a divisorial cycle supported on $X_t$. Define a presheaf $\mathcal{O}(D)$ by
\[ \mathcal{O}(D)(U)=\{s\in j_*\mathcal{O}_{X_u}(U):\Div_U(s)+D|_U\geq 0\} \]
This definition also makes sense for divisorial cycles with rational coefficients (briefly, \emph{rational divisorial cycles}). Moreover, by Lemma \ref{integrallyclosed}, we have $\mathcal{O}(0)=\mathcal{O}_X$.

\begin{lemma}\label{rationalcycles}
Let $D$ and $E$ be rational divisorial cycles supported on $X_t$.
\begin{enumerate}[label=\normalfont(\roman*)]
\item $\mathcal{O}(D)$ is a coherent sheaf of $\mathcal{O}_X$-modules.
\item $\mathcal{O}(D)=\mathcal{O}(\lfloor D\rfloor)$, where $\lfloor \cdot\rfloor$ denotes the floor function, applied componentwise to $D$.
\item If $E\leq D$, then $\mathcal{O}(E)\subset\mathcal{O}(D)$. In particular, if $E\leq 0$, then $\mathcal{O}(E)\subset\mathcal{O}_X$.
\item If $s\in j_*\mathcal{O}_{X_u}(X)^*$ and $D=\Div_X(s)$, then $\mathcal{O}(-D)=s\mathcal{O}_X$. Conversely, if $\mathcal{O}(D)$ is a line bundle, then, locally on $X$, we have $\lfloor D\rfloor=\Div_X(s)$ for some $s\in j_*\mathcal{O}_{X_u}(X)^*$.
\item There is a canonical map
\[ \mathcal{O}(D)\otimes_{\mathcal{O}_X}\mathcal{O}(E)\to\mathcal{O}(D+E) \]
It is an isomorphism if $D$ is integral and $\mathcal{O}(D)$ is a line bundle.
\end{enumerate}
\end{lemma}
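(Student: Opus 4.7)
The plan is to take the five parts roughly in the order (ii), (i), (iii), (iv), (v), exploiting two structural facts: first, since $v_i$ takes values in $\mathbb{Z}_{\infty}$ on $j_*\mathcal{O}_{X_u}$, the condition defining $\mathcal{O}(D)$ only sees the floor of $D$; second, the flatness of $f$ means $\pi$ is a regular element on $X$ and $j_*\mathcal{O}_{X_u}$ is locally $\mathcal{O}_X[1/\pi]$, so multiplication by a power of $\pi$ shifts $\mathcal{O}(D)$ by a multiple of $\Div_X(\pi) = \sum_i v_i(\pi)[X_i]$. Part (ii) is then immediate: for $s \in j_*\mathcal{O}_{X_u}$, the inequality $v_i(s) \geq -D_i$ is equivalent to $v_i(s) \geq \lceil -D_i\rceil = -\lfloor D_i\rfloor$.

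With (ii) in hand, to prove (i) I would reduce to integral $D$, then choose $N$ large enough that $D \leq N\Div_X(\pi)$ componentwise on all finitely many $X_i$. Multiplication by $\pi^N$ identifies $\mathcal{O}(D)$ with $\mathcal{O}(D - N\Div_X(\pi))$, and the latter is contained in $\mathcal{O}_X$ by (iii). Locally on an affine $\Spec A \subset X$ this subsheaf is the finite intersection $\bigcap_i \mathfrak{p}_i^{(Nv_i(\pi) - D_i)}$ of symbolic powers of the height-one primes corresponding to the $\eta_i$, which is a finitely generated ideal. Coherence follows. Part (iii) is clear from the defining inequalities: $E \leq D$ makes the condition $v_i(s) + E_i \geq 0$ imply $v_i(s) + D_i \geq 0$.

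For (iv), the first assertion uses Lemma \ref{integrallyclosed} directly: if $D = \Div_X(s)$ with $s \in j_*\mathcal{O}_{X_u}(X)^*$, then $t \in \mathcal{O}(-D)$ iff $v_i(t/s) \geq 0$ for all $i$ iff $t/s \in \mathcal{O}_X$. For the converse, reduce to integral $D$ via (ii), pick a local generator $t$ of the line bundle $\mathcal{O}(D)$, and note that $t$ becomes a unit after restriction to $X_u$ (since $\mathcal{O}(D)|_{X_u} = \mathcal{O}_{X_u}$), hence lies in $j_*\mathcal{O}_{X_u}^*$ locally. Checking stalks at each $\eta_i$, the fractional ideal $\mathcal{O}(D)_{\eta_i} = \{s : v_i(s) \geq -D_i\}$ is principal, generated by any element of $v_i$-value exactly $-D_i$; so $v_i(t) = -D_i$ and thus $\Div_X(t^{-1}) = D$. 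Part (v) is then formal: multiplication in $j_*\mathcal{O}_{X_u}$ is $\mathcal{O}_X$-bilinear and sends $\mathcal{O}(D) \times \mathcal{O}(E)$ into $\mathcal{O}(D+E)$ by additivity of valuations; when $D$ is integral and $\mathcal{O}(D) = t\mathcal{O}_X$ locally, the induced map becomes multiplication by $t$, which sends $\mathcal{O}(E)$ bijectively onto $\mathcal{O}(D+E)$ by a direct valuation check.

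No single step is a serious obstacle—the proof is essentially a careful unravelling of definitions—but the two places requiring genuine argument are (i), where one must package $\mathcal{O}(D)$ as a $\pi$-twist of an honest coherent ideal, and the converse in (iv), where one must identify a local generator of the line bundle with a unit of $j_*\mathcal{O}_{X_u}$ whose divisor recovers $\lfloor D\rfloor$ exactly.
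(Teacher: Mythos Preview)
Your proposal is correct and follows essentially the same route as the paper. The paper in fact leaves parts (ii)--(v) to the reader and only writes out (i), where it proceeds just as you do: twist by a power of $\pi$ to land inside $\mathcal{O}_X$, then use noetherianity.

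One small point worth tightening in your argument for (i): when you say ``locally on an affine $\Spec A$ this subsheaf is the finite intersection $\bigcap_i \mathfrak{p}_i^{(m_i)}$'', you are implicitly asserting that the presheaf $\mathcal{O}(E)$ (for $E\leq 0$ integral) agrees with the quasi-coherent sheaf associated to that ideal, i.e., that sections over $D(a)$ are obtained by localizing. This requires a short check, because on $D(a)$ the conditions at those $\eta_i$ with $v_i(a)>0$ disappear. The paper carries this out explicitly: given $s\in \mathcal{O}(E)(D(a))$, write $s$ as an element of $j_*\mathcal{O}_{X_u}(\Spec A)$ and multiply by a high power of $a$ to force $v_i(a^N s)\geq -E_i$ also at the $\eta_i\notin D(a)$. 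Once quasi-coherence is in hand, your coherence argument via noetherianity goes through unchanged.
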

\begin{proof}
One proves that $\mathcal{O}(D)$ is a sheaf as in the proof of Lemma \ref{integrallyclosed}. Clearly, it is an $\mathcal{O}_X$-module. To check that it is coherent is local so we may assume $X=\Spec(A)$ is affine. Let $a\in A$, $B=A[z]/(az-1)$, $U=\Spec(B)$, $M=\mathcal{O}(D)(X)$.

We first show that $\mathcal{O}(D)$ is quasi-coherent. It suffices to show that the canonical map $M\otimes_AB\to\mathcal{O}(D)(U)$ is an isomorphism (cf. \cite[I, 1.4.1]{ega}). Note that the map is injective, since $j_*\mathcal{O}_{X_u}$ is quasi-coherent and contains $\mathcal{O}(D)$. For the surjectivity, we need to show that for any $b\in\mathcal{O}(D)(U)$ there is $N\geq 0$ such that $a^Nb$ is the image an element of $M$. By the quasi-coherence of $j_*\mathcal{O}_{X_u}$, $a^Nb$ is the image of $s\in j_*\mathcal{O}_{X_u}(X)$ for $N\gg 0$. Write $D=\sum n_i[X_i]$. If $v_i(a)=0$, then $\eta_i\in U$ and $v_i(b)\geq -n_i$. If $v_i(a)>0$, then for $N\gg 0$ we have $v_i(a^Nb)=Nv_i(a)+v_i(b)\geq -n_i$. Thus, $v_i(s)\geq -n_i$ for all $1\leq i\leq c$, so $\Div_X(s)\geq -D$ and $s\in M$, as required.

For coherence, let $\pi\in\mathcal{O}(T)$ be a uniformizer and choose $N\geq 0$ such that $\Div_X(\pi^N)\geq D$. Then, $\pi^N\mathcal{O}(D)\subset\mathcal{O}_X$ by Lemma \ref{integrallyclosed}, so, since $\mathcal{O}(D)$ is $\pi$-torsion free and $X$ is noetherian, (i) follows (cf. \cite[I, 1.5.1]{ega}). The remaining statements are left to the reader.
\end{proof}

\subsection{Proof of Theorem \ref{gabberraynaudindex}}
We begin with some general considerations. Assume $f$ is flat, of finite type, with nonempty special fibre, and satisfies (N). Let $\pi\in\mathcal{O}(T)$ be a uniformizer.

Consider the divisorial cycle $D=\Div_X(\pi)=\sum_in_i[X_i]$. Since $X_t$ is $S_1$, for $N\gg 0$ the sheaf $\mathcal{O}(-D/N)$ is the radical of $\mathcal{O}(-D)$, i.e., the ideal sheaf of $X_0:=(X_k)_{\red}$. We fix such an integer $N\geq 1$ that is divisible by all $n_i$.

\begin{proposition}\label{vanishingcriterion}
Assume $f$ proper and $f_*\mathcal{O}_X=\mathcal{O}_T$. The following are equivalent
\begin{enumerate}[label=\normalfont(\roman*)]
\item $H^0(X_t,\mathcal{O}_{X_t})$ is not reduced
\item $H^0(X,\mathcal{O}(-D/N)/\mathcal{O}(-D))\neq 0$
\item there is an effective Cartier divisor $C\subset X$ such that $rC=D$ for some rational number $r>1$ and $\mathcal{O}(-C)|_C\simeq\mathcal{O}_C$.
\end{enumerate}
Moreover, if this holds, then in \textup{(iii)} we can take $r=p^e/m$ with $e,m\in\mathbb{N}$ and $(m,p)=1$.
\end{proposition}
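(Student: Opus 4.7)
The plan is to establish (i) $\Leftrightarrow$ (ii) by sheaf-theoretic identification, then (iii) $\Rightarrow$ (ii) by an explicit local-equation construction, and finally tackle (ii) $\Rightarrow$ (iii) together with the moreover, which I expect to be the main obstacle.

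For (i) $\Leftrightarrow$ (ii), I would observe that Lemma \ref{integrallyclosed} gives $\mathcal{O}(-D) = \pi\mathcal{O}_X$ (since $\Div_X(\pi) = D$), whence $\mathcal{O}_X/\mathcal{O}(-D) = \mathcal{O}_{X_t}$; by the choice of $N$, the subsheaf $\mathcal{O}(-D/N)$ is the ideal sheaf of $X_0 = (X_t)_{\mathrm{red}}$. Therefore $\mathcal{O}(-D/N)/\mathcal{O}(-D) \hookrightarrow \mathcal{O}_{X_t}$ realises the nilradical, and since $\mathcal{O}_{X_0}$ is a sheaf of reduced rings, $H^0(X_0,\mathcal{O}_{X_0})$ is a reduced ring. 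Hence the kernel of $H^0(X_t,\mathcal{O}_{X_t}) \to H^0(X_0,\mathcal{O}_{X_0})$ consists exactly of the nilpotent global sections, and equals $H^0(X,\mathcal{O}(-D/N)/\mathcal{O}(-D))$.

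For (iii) $\Rightarrow$ (ii), I would write $r = k/a$ in lowest terms (so $k > a \geq 1$), giving $kC = aD = \Div_X(\pi^a)$. Cover $X$ by opens $\{U\}$ on which one selects local equations $c_U$ of $C$ whose classes modulo $\mathcal{O}(-2C)$ match the prescribed trivialization $\sigma$ of $\mathcal{O}(-C)|_C$; this is possible because $\sigma$ is nowhere vanishing. Locally $c_U^k = u_U\pi^a$ for units $u_U \in \mathcal{O}_X(U)^*$, and on overlaps $c_U = (1+g_{UV})c_V$ with $g_{UV} \in \mathcal{O}(-C)$. Pick an integer $j \in [r-1, r)$, which exists since $r > 1$. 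Then $c_U^j \notin \pi\mathcal{O}_X$ (because $jC < D$ componentwise), while
\[ c_U^j - c_V^j = ((1+g_{UV})^j - 1)\,c_V^j \in \mathcal{O}(-(j+1)C) \subseteq \mathcal{O}(-D) \]
since $j+1 \geq r$. Hence the reductions $\bar c_U^j$ glue to a global section of $\mathcal{O}_{X_t}$ that is nonzero locally and nilpotent, as $(c_U^j)^{\lceil k/j\rceil} \in \mathcal{O}(-kC) = \pi^a\mathcal{O}_X$. Any nonzero nilpotent global section lies in the nilradical, giving (ii).

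The hard direction is (ii) $\Rightarrow$ (iii), together with the moreover. Given a nonzero $s \in H^0(X,\mathcal{O}(-D/N)/\mathcal{O}(-D))$, the plan is to invert the construction above: local lifts $\tilde s_U \in \mathcal{O}(-X_0)(U)$ have well-defined valuations $v_{\eta_i}(\tilde s_U)$ at the maximal points $\eta_i$ of $X_t$, and one expects a suitable Cartier divisor $C$ to emerge with coefficient essentially $n_i - v_{\eta_i}(\tilde s_U)$ on $X_i$ (so that the classes of the local lifts supply the trivialization of $\mathcal{O}(-C)|_C$). The main obstacle is that demanding $rC = D$ with a common rational $r$ requires the ratios $v_{\eta_i}(\tilde s_U)/n_i$ to be uniform across components, which is not automatic; one must modify $s$ by taking appropriate powers, and in residue characteristic $p > 0$ exploit the Frobenius structure by arranging $s^p = 0$. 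The moreover's specific form $r = p^e/m$ with $(m,p) = 1$ then emerges naturally: writing $n_i = p^{e_i}m_i$ with $(m_i,p) = 1$, the $p$th-power relation $\tilde s_U^p = \pi w_U$ combined with the $p$-adic decomposition of the multiplicities forces the constructed $r$ to have $p$-power numerator and prime-to-$p$ denominator. I expect the technical core of the argument to be verifying that the proportional cycle so produced is genuinely locally principal and that its conormal sheaf is globally trivialized by the reductions of the local lifts.
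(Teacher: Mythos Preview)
Your treatments of (i) $\Leftrightarrow$ (ii) and (iii) $\Rightarrow$ (ii) are sound and close in spirit to the paper (the paper proves the cycle (i) $\Rightarrow$ (ii) $\Rightarrow$ (iii) $\Rightarrow$ (i), but your direct equivalence (i) $\Leftrightarrow$ (ii) via the nilradical is equally valid, and your gluing of $\bar c_U^j$ for the integer $j\in[r-1,r)$ is a clean variant of the paper's construction of a nilpotent $g$).

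The gap is in (ii) $\Rightarrow$ (iii) and the moreover. Your sketch does not explain why the Weil cycle you build is \emph{Cartier}, nor why the ratios $v_{\eta_i}(\tilde s_U)/n_i$ should become uniform after ``taking appropriate powers''; and the asserted relation $\tilde s_U^p=\pi w_U$ has no justification. The paper's argument is structurally different and supplies exactly the missing ingredients. First, it filters $H^0(\mathcal{O}(-D/N)/\mathcal{O}(-D))$ by the subsheaves $\mathcal{O}(-aD/N)/\mathcal{O}(-D)$ and works with a nonzero graded piece $\gr^aH\subset H^0(X,\mathcal{F}_a)$ where $\mathcal{F}_a=\mathcal{O}(-aD/N)/\mathcal{O}(-(a+1)D/N)$. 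Second, and crucially, it introduces the $N$-th power map $\varphi_N:\mathcal{F}_a\to\mathcal{G}_a$ with $\mathcal{G}_a\cong\mathcal{O}(-aD)|_{X_0}\cong\mathcal{O}_{X_0}$; since $H^0(X_0,\mathcal{O}_{X_0})$ is a field (from $f_*\mathcal{O}_X=\mathcal{O}_T$), any nonzero section of $\mathcal{F}_a$ maps to a nowhere-vanishing section of $\mathcal{G}_a$, and from this one deduces both that $\mathcal{O}(-aD/N)$ is a line bundle and that $aD/N$ is integral---this is what forces the ratios to be uniform and guarantees Cartierness. Third, the specific form $r=p^e/m$ with $(m,p)=1$ is not obtained from Frobenius on $X$ but from a \emph{Picard} argument: the line bundle $\mathcal{L}=\mathcal{O}(-aD/N)$ has finite order dividing $N$ and trivial restriction to $X_0$, and over the strict henselization the Kummer sequence together with the proper base change theorem gives $\Pic(X)[n']\cong\Pic(X_0)[n']$ for $n'$ prime to $p$, so the prime-to-$p$ part of the order of $\mathcal{L}$ is $1$. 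This étale-cohomological step is the real source of the $p$-power numerator, and it is absent from your plan.
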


We now work towards the proof of Proposition \ref{vanishingcriterion}. The filtration
\begin{equation} \begin{aligned}
\mathcal{O}(-D)=\mathcal{O}(-ND/N)\subset\mathcal{O}(-(N-1)D/N)\subset\cdots\\
\cdots\subset\mathcal{O}(-2D/N)\subset\mathcal{O}(-D/N)\subset\mathcal{O}
\end{aligned}\label{powerfiltration}
\end{equation}
induces a finite descending filtration of $H:=H^0(X,\mathcal{O}(-D/N)/\mathcal{O}(-D))$ with graded
\begin{equation}\label{graded}
\gr^aH:=\dfrac{H^0(X,\mathcal{O}(-aD/N)/\mathcal{O}(-D))}{H^0(X,\mathcal{O}(-(a+1)D/N)/\mathcal{O}(-D))}\subset H^0(X,\mathcal{F}_a)
\end{equation}
where
\[ \mathcal{F}_a:=\frac{\mathcal{O}(-aD/N)}{\mathcal{O}(-(a+1)D/N)} \]
for $0<a<N$ and $\mathcal{F}_a=0$ otherwise. For $0<a<N$ define
\[ \mathcal{G}_a:=\frac{\mathcal{O}(-aD)}{\mathcal{O}(-(Na+1)D/N)} \]
and a map of sheaves of sets
\begin{align*}
\varphi_N:\mathcal{F}_a &\to \mathcal{G}_a \\
s &\mapsto s^N
\end{align*}
This map is well defined (exercise). We are going to analyze the sections of $\mathcal{F}_a$ using the map $\varphi_N$ and deduce Proposition \ref{vanishingcriterion}.

\begin{lemma}\label{nonzero}
$\varphi_N$ maps nonzero sections to nonzero sections.
\end{lemma}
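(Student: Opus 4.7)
The proof will be local and valuation-theoretic. Given a nonzero local section of $\mathcal{F}_a = \mathcal{O}(-aD/N)/\mathcal{O}(-(a+1)D/N)$, lift it to a section $s$ of $\mathcal{O}(-aD/N)$ not lying in $\mathcal{O}(-(a+1)D/N)$. Unwinding the definition of $\mathcal{O}(\cdot)$ via Lemma \ref{rationalcycles} (ii), this means that for every component $X_i$ one has $v_i(s) \geq \lceil a n_i/N \rceil$, while for some component $X_i$ one has $v_i(s) < \lceil (a+1)n_i/N \rceil$. The plan is to show that at such a distinguished $i$ the valuation $v_i(s)$ is forced to equal $an_i/N$ exactly, and then to deduce directly that $s^N$ is nonzero in $\mathcal{G}_a$ at $\eta_i$.

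The key arithmetic step uses that $n_i \mid N$ by the choice of $N$. Writing $k_i := N/n_i \in \mathbb{Z}_{>0}$, we have $an_i/N = a/k_i$ and $(a+1)n_i/N = (a+1)/k_i$. An elementary case analysis (write $a = qk_i + r$ with $0 \leq r < k_i$) shows that
\[
\lceil (a+1)/k_i \rceil > \lceil a/k_i \rceil \iff k_i \mid a,
\]
in which case the set of integers in $[\lceil a/k_i \rceil,\, \lceil (a+1)/k_i \rceil - 1]$ is the singleton $\{a/k_i\}$. Hence the existence of an integer $v_i(s)$ satisfying both $\lceil an_i/N\rceil \leq v_i(s) < \lceil(a+1)n_i/N\rceil$ forces $k_i \mid a$ and $v_i(s) = a/k_i = an_i/N$.

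It remains to verify that $s^N$ represents a nonzero section of $\mathcal{G}_a = \mathcal{O}(-aD)/\mathcal{O}(-(Na+1)D/N)$ at $\eta_i$. Since $v_j(s^N) = Nv_j(s) \geq an_j$ for every $j$, we have $s^N \in \mathcal{O}(-aD)$. At the component $i$ singled out above, $v_i(s^N) = N \cdot (a/k_i) = an_i$. On the other hand, membership in $\mathcal{O}(-(Na+1)D/N)$ requires $v_j(s^N) \geq \lceil (Na+1)n_j/N \rceil = an_j + \lceil n_j/N\rceil = an_j + 1$ at every component (using $0 < n_j \leq N$). Since $v_i(s^N) = an_i < an_i + 1$, we conclude that $s^N \notin \mathcal{O}(-(Na+1)D/N)$ locally at $\eta_i$, so $\varphi_N(s) = s^N$ is nonzero in $\mathcal{G}_a$.

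I do not anticipate a real obstacle here: the entire content is the arithmetic identity $\lceil (a+1)/k_i\rceil > \lceil a/k_i\rceil \iff k_i \mid a$, which forces the valuation $v_i(s)$ to land at the unique integer $a/k_i$. Everything else is a direct valuation computation in $\mathcal{O}_{X,\eta_i}$.
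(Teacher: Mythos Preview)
Your argument is correct and is essentially the paper's proof run in the direct rather than contrapositive direction: both are local valuation computations at the $\eta_i$ exploiting $n_i\mid N$, with the only difference being that the paper deduces $\lceil (Na+1)n_i/N^2\rceil=\lceil (a+1)n_i/N\rceil$ via a nested-ceiling identity, whereas you first pin down $v_i(s)=an_i/N$ from the equivalence $\lceil (a+1)/k_i\rceil>\lceil a/k_i\rceil\iff k_i\mid a$.
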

\begin{proof}
The claim is local. Suppose $\varphi_N(s)=0$ for $s\in\mathcal{O}(-aD/N)$. Then $s^N\in\mathcal{O}(-(Na+1)D/N)$, so $\Div_X(s)\geq (Na+1)D/N^2$. Hence, for all $1\leq i\leq c$ we have $v_i(s)\geq (Na+1)n_i/N^2$. Since $N$ is divisible by $n_i$, we have
\[ \left\lceil\frac{(Na+1)n_i}{N^2}\right\rceil=\left\lceil\frac{\lceil(Na+1)/N\rceil}{(N/n_i)}\right\rceil=\left\lceil\frac{\lceil a+1/N\rceil}{(N/n_i)}\right\rceil=\left\lceil\frac{a+1}{(N/n_i)}\right\rceil \]
(for the first equality, cf. \cite[\S 3.2, (3.10)]{concretemath}). Hence, $\lceil (Na+1)n_i/N^2\rceil= \lceil (a+1)n_i/N\rceil$ and, since $v_i(s)\in\mathbb{Z}_{\infty}$, we get $v_i(s)\geq (a+1)n_i/N$. Thus, $\Div_X(s)\geq (a+1)D/N$ and $s\in\mathcal{O}(-(a+1)D/N)$, so $s$ has image 0 in $\mathcal{F}_a$.
\end{proof}

\begin{lemma}\label{trivial1}
$\mathcal{G}_a\cong\mathcal{O}(-aD)|_{X_0}$
\end{lemma}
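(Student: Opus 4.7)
My plan is to realize $\mathcal{G}_a$ and $\mathcal{O}(-aD)|_{X_0}$ as the same quotient of the line bundle $\mathcal{O}(-aD)$, by identifying $\mathcal{O}(-(Na+1)D/N)$ with $\mathcal{I}_{X_0}\cdot\mathcal{O}(-aD)$ via Lemma~\ref{rationalcycles}(v).

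First I would record two preliminary facts. Since $D=\Div_X(\pi)$, we have $aD=\Div_X(\pi^a)$, so Lemma~\ref{rationalcycles}(iv) gives $\mathcal{O}(-aD)=\pi^a\mathcal{O}_X$; in particular $\mathcal{O}(-aD)$ is a (trivial) line bundle. By the paragraph immediately preceding Proposition~\ref{vanishingcriterion}, $\mathcal{O}(-D/N)$ is the ideal sheaf $\mathcal{I}_{X_0}$ of $X_0=(X_t)_{\mathrm{red}}$ in $X$; equivalently, since each $n_i$ divides $N$ and $0<n_i/N\leq 1$, one has $\lfloor -D/N\rfloor=-\sum_i[X_i]=-X_0$, and Lemma~\ref{rationalcycles}(ii) yields $\mathcal{O}(-D/N)=\mathcal{O}(-X_0)=\mathcal{I}_{X_0}$.

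The key step is then to apply Lemma~\ref{rationalcycles}(v) with the integral cycle $-aD$ and the rational cycle $-D/N$. Since $\mathcal{O}(-aD)$ is a line bundle, the canonical multiplication map
\[
\mathcal{O}(-aD)\otimes_{\mathcal{O}_X}\mathcal{O}(-D/N)\;\xrightarrow{\ \sim\ }\;\mathcal{O}(-aD-D/N)=\mathcal{O}(-(Na+1)D/N)
\]
is an isomorphism. Viewing everything as submodules of $j_*\mathcal{O}_{X_u}$, this identification reads
\[
\mathcal{O}(-(Na+1)D/N)=\mathcal{I}_{X_0}\cdot\mathcal{O}(-aD).
\]
Taking the quotient then gives
\[
\mathcal{G}_a=\frac{\mathcal{O}(-aD)}{\mathcal{O}(-(Na+1)D/N)}=\frac{\mathcal{O}(-aD)}{\mathcal{I}_{X_0}\cdot\mathcal{O}(-aD)}=\mathcal{O}(-aD)\otimes_{\mathcal{O}_X}\mathcal{O}_{X_0}=\mathcal{O}(-aD)|_{X_0},
\]
as desired. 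The only delicate point is the equality $\mathcal{O}(-(Na+1)D/N)=\mathcal{I}_{X_0}\cdot\mathcal{O}(-aD)$, which is exactly the tensor-product isomorphism of Lemma~\ref{rationalcycles}(v) under the hypothesis that $\mathcal{O}(-aD)$ is a line bundle; I therefore expect no serious obstacle.
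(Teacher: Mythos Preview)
Your argument is correct, and in fact it is more direct than the paper's. You observe that $-aD$ is an integral cycle with $\mathcal{O}(-aD)=\pi^a\mathcal{O}_X$ a line bundle, so Lemma~\ref{rationalcycles}(v) immediately gives the isomorphism $\mathcal{O}(-aD)\otimes\mathcal{O}(-D/N)\xrightarrow{\sim}\mathcal{O}(-(Na+1)D/N)$, whence $\mathcal{O}(-(Na+1)D/N)=\mathcal{I}_{X_0}\cdot\mathcal{O}(-aD)$ and the result follows.

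The paper instead does not invoke the line-bundle clause of Lemma~\ref{rationalcycles}(v); it only uses the existence of the multiplication map $\mathcal{O}(-D/N)\otimes\mathcal{O}(-aD)\to\mathcal{O}(-(Na+1)D/N)$ to get a surjection $\mathcal{O}(-aD)|_{X_0}\twoheadrightarrow\mathcal{G}_a$, and then shows the kernel $J$ vanishes by checking $J_{\eta_i}=0$ at each generic point (via the ceiling computation $\lceil(Na+1)n_i/N\rceil=an_i+1$) and using that $\mathcal{O}(-aD)|_{X_0}\cong\mathcal{O}_{X_0}$ injects into the product of its stalks at the $\eta_i$ because $X_0$ is reduced. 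Your shortcut is cleaner here; the paper's approach has the mild advantage of establishing a pattern reused verbatim in Corollary~\ref{trivial2}, where the analogous sheaf $\mathcal{O}(-aD/N)$ is not known in advance to be a line bundle.
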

\begin{proof}
The canonical map $\mathcal{O}(-D/N)\otimes_{\mathcal{O}_X}\mathcal{O}(-aD)\to\mathcal{O}(-(Na+1)D/N)$ (Lemma \ref{rationalcycles} (v)) gives an exact sequence of $\mathcal{O}_{X_0}$-modules
\[ 0\to J\to\mathcal{O}(-aD)|_{X_0}\to \mathcal{G}_a\to 0 \]
We have
\[ \left\lceil\frac{(Na+1)n_i}{N}\right\rceil=\left\lceil an_i+\frac{n_i}{N}\right\rceil=an_i+\left\lceil\frac{n_i}{N}\right\rceil = an_i+1 \]
so $(\mathcal{O}(-D/N)\otimes_{\mathcal{O}_X}\mathcal{O}(-aD))_{\eta_i}\cong\mathcal{O}(-(Na+1)D/N)_{\eta_i}$, hence $J_{\eta_i}=0$ for all $1\leq i\leq c$. Since $X_0$ is reduced, $\mathcal{O}(-aD)|_{X_0}\cong\mathcal{O}_{X_0}$ injects into the product of its localisations at the points $\eta_i$, hence $J=0$.
\end{proof}

\begin{corollary}\label{nozeros}
Assume $H^0(X_0,\mathcal{O}_{X_0})$ is a field. If $0\neq s\in H^0(X,\mathcal{F}_a)$, then
\begin{enumerate}[label=\normalfont(\roman*)]
\item $\varphi_N(s)\in\mathcal{G}_a(X)$ vanishes nowhere on $X_t$
\item $s\in\mathcal{F}_a(X)$ vanishes nowhere on $X_t$.
\end{enumerate}
\end{corollary}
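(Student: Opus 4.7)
The plan is to exploit the fact that $\mathcal{G}_a$ is a trivial line bundle on $X_0$, so that by hypothesis its global sections form a field; the $N$-th power map $\varphi_N$ then transfers the unit property of $\varphi_N(s)$ back to non-vanishing of $s$ itself.

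For (i), I would first apply Lemma \ref{rationalcycles}(iv) to $\pi^a \in j_*\mathcal{O}_{X_u}(X)^*$, noting that $aD = \Div_X(\pi^a)$, to obtain $\mathcal{O}(-aD) = \pi^a\mathcal{O}_X \cong \mathcal{O}_X$. Combining this with Lemma \ref{trivial1} gives $\mathcal{G}_a \cong \mathcal{O}_{X_0}$, so
\[ H^0(X,\mathcal{G}_a) \cong H^0(X_0,\mathcal{O}_{X_0}), \]
a field by hypothesis. Since $\varphi_N(s) \neq 0$ by Lemma \ref{nonzero}, it is invertible in this field and hence vanishes nowhere on $X_0$. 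As $X_0 = (X_t)_{\red}$ has the same underlying topological space as $X_t$, assertion (i) follows.

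For (ii) I would argue by contradiction: suppose $s$ vanishes at some $x \in X_t$, so $s_x \in \mathfrak{m}_x\mathcal{F}_{a,x}$. Then any lift $\tilde{s}$ of $s$ to $\mathcal{O}(-aD/N)_x$ can be written as
\[ \tilde{s} = \sum_j m_j \tilde{t}_j + u \]
with $m_j \in \mathfrak{m}_x$, $\tilde{t}_j \in \mathcal{O}(-aD/N)_x$, and $u \in \mathcal{O}(-(a+1)D/N)_x$. Expanding $\tilde{s}^N$ by the binomial theorem, the term $u^N$ lies in $\mathcal{O}(-(Na+1)D/N)_x$ by the same estimate used in Lemma \ref{trivial1}, namely $N\lceil (a+1)n_i/N\rceil \geq (a+1)n_i \geq an_i + 1$; every other term carries a factor from $\mathfrak{m}_x$ and, via the valuation bound $k\lceil an_i/N\rceil + (N-k)\lceil (a+1)n_i/N\rceil \geq an_i$ valid for $1 \leq k \leq N$, lies in $\mathcal{O}(-aD)_x$. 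Therefore the image of $\tilde{s}^N$ in $\mathcal{G}_{a,x}$ lies in $\mathfrak{m}_x\mathcal{G}_{a,x}$, i.e.\ $\varphi_N(s)$ vanishes at $x$, contradicting (i).

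The only delicate step is the valuation bookkeeping in the binomial expansion, but it closely parallels the computations already carried out in Lemmas \ref{nonzero} and \ref{trivial1}, so no new ingredients are needed.
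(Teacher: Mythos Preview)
Your proof is correct and follows essentially the same route as the paper: part (i) uses Lemma~\ref{trivial1} together with Lemma~\ref{nonzero} exactly as the paper does, and for part (ii) both you and the paper show that if $s$ lies in $\mathfrak{m}_x\mathcal{F}_a$ (more generally $I\mathcal{F}_a$) then $\varphi_N(s)$ lies in $\mathfrak{m}_x\mathcal{G}_a$ via a binomial/multinomial expansion, contradicting (i). The only cosmetic difference is that the paper phrases the key claim for a general ideal $I\subset\mathcal{O}_X$ and argues by induction on the number of summands, whereas you expand directly with $I=\mathfrak{m}_x$; the valuation estimates are identical.
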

\begin{proof}
By Lemma \ref{trivial1} we have $H^0(X_0,\mathcal{G}_a)\simeq H^0(X_0,\mathcal{O}_{X_0})$. Since the latter is a field by assumption, (i) follows from Lemma \ref{nonzero}.

For (ii), we claim that if $s\in I\mathcal{F}_a$ for some ideal $I\subset\mathcal{O}_X$, then $\varphi_N(s)\in I\mathcal{G}_a$. It is enough to show that if $s\in I\mathcal{O}(-aD/N)$, then $s^N\in I\mathcal{O}(-aD)$. This is clear if $s=xt$ with $x\in I$ and $t\in\mathcal{O}(-aD/N)$, so by induction it suffices to show it for $xt+w$ with $w\in I\mathcal{O}(-aD/N)$ such that $w^N\in I\mathcal{O}(-aD)$. By the binomial formula, this reduces to showing $t^iw^{N-i}\in\mathcal{O}(-aD)$, which is clear.

In particular, if $s\in I\mathcal{F}_a$, then $\varphi_N(s)$ vanishes on the support of $\mathcal{O}_X/I$. By (i), this implies that the support of $\mathcal{O}_X/I$ does not meet $X_t$, hence (ii).
\end{proof}

\begin{corollary}\label{cartier}
Assume $H^0(X_0,\mathcal{O}_{X_0})$ is a field. If $H^0(X,\mathcal{F}_a)\neq 0$, then $\mathcal{O}(-aD/N)$ is a line bundle satisfying $\mathcal{O}(-aD/N)^{\otimes N}\cong\mathcal{O}(-aD)$.
\end{corollary}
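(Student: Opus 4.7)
The plan is to extract a local trivialization of $\mathcal{O}(-aD/N)$ from a nonzero $s \in H^0(X,\mathcal{F}_a)$: use the field hypothesis on $H^0(X_0,\mathcal{O}_{X_0})$ to promote $\varphi_N(s)$ to a unit on $X_0$, then use integral closedness from condition (N) to get principal generation of $\mathcal{O}(-aD/N)$ stalk by stalk.

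First I would identify $\mathcal{O}(-aD) = \pi^a \mathcal{O}_X \cong \mathcal{O}_X$ (via Lemma \ref{rationalcycles} (iv) applied to $D = \Div_X(\pi)$), so that Lemma \ref{trivial1} gives $\mathcal{G}_a \cong \mathcal{O}_{X_0}$ and $H^0(X,\mathcal{G}_a) \cong H^0(X_0,\mathcal{O}_{X_0})$ is a field. By Lemma \ref{nonzero}, $\varphi_N(s)$ defines a nonzero, hence invertible, element of that field, so $\varphi_N(s)_x$ is a unit in $\mathcal{O}_{X_0,x}$ for every $x \in X_0$. Unwinding at $x \in X_t$: a local lift $\tilde s \in \mathcal{O}(-aD/N)_x$ of $s_x$ satisfies $\tilde s^N = \pi^a w$ for some $w \in \mathcal{O}_{X,x}$, and the image of $w$ in $\mathcal{O}_{X_0,x}$ equals $\varphi_N(s)_x$; thus $w \in \mathcal{O}_{X,x}^*$. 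In particular $\tilde s$ is a non-zero-divisor and is a unit in $(j_*\mathcal{O}_{X_u})_x = \mathcal{O}_{X,x}[\pi^{-1}]$.

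The key step is then to show $\mathcal{O}(-aD/N)_x = \tilde s\, \mathcal{O}_{X,x}$. Given $t \in \mathcal{O}(-aD/N)_x$, we have $t^N \in \mathcal{O}(-aD)_x = \pi^a \mathcal{O}_{X,x}$, so $(t/\tilde s)^N = t^N/(\pi^a w) \in \mathcal{O}_{X,x}$; thus $t/\tilde s \in (j_*\mathcal{O}_{X_u})_x$ is integral over $\mathcal{O}_{X,x}$, and condition (N)---which by Proposition \ref{kollar} says $\mathcal{O}_{X,x}$ is integrally closed in $(j_*\mathcal{O}_{X_u})_x$---forces $t/\tilde s \in \mathcal{O}_{X,x}$. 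Combined with the trivial case $x \in X_u$ (where $\mathcal{O}(-aD/N)_x = \mathcal{O}_{X,x}$), this shows $\mathcal{O}(-aD/N)$ is a line bundle; the multiplication map $\mathcal{O}(-aD/N)^{\otimes N} \to \mathcal{O}(-aD)$ sends the local generator $\tilde s^{\otimes N}$ to $\tilde s^N = \pi^a w$, also a local generator of $\mathcal{O}(-aD)_x$, so it is a surjective map of line bundles, hence an isomorphism.

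The main obstacle is precisely this descent from integrality of $t/\tilde s$ to its membership in $\mathcal{O}_{X,x}$. The rest is essentially bookkeeping around Corollary \ref{nozeros} (i) and the trivialization $\mathcal{G}_a \cong \mathcal{O}_{X_0}$; without condition (N), the element $t/\tilde s$ would only be integral, not in $\mathcal{O}_{X,x}$, and the principal-generation argument would collapse.
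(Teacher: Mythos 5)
Your proof is correct and follows essentially the same route as the paper: lift $s$ locally to $\tilde s$, use Corollary \ref{nozeros} (i) (via the trivialization $\mathcal{G}_a\cong\mathcal{O}(-aD)|_{X_0}\cong\mathcal{O}_{X_0}$) to write $\tilde s^N=\pi^a w$ with $w$ a unit, and conclude that $\tilde s$ freely generates $\mathcal{O}(-aD/N)$ so that $\mathcal{O}(-aD/N)^{\otimes N}\to\mathcal{O}(-aD)$ is an isomorphism. The only cosmetic difference is that the paper deduces $g/\tilde s\in\mathcal{O}_X$ from the valuation inequality of Lemma \ref{integrallyclosed}, whereas you use integrality of $g/\tilde s$ together with Proposition \ref{kollar}; the two are interchangeable here.
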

\begin{proof}
We first work locally on $X_t$. Let $0\neq s\in H^0(X,\mathcal{F}_a)$ and choose a local lift $\tilde{s}\in\mathcal{O}(-aD/N)$ of $s$. Then $\tilde{s}^N\in\mathcal{O}(-aD)=\pi^a\mathcal{O}_X$, so $\tilde{s}^N=\pi^aw$ for some $w\in\mathcal{O}_X$. Since $\tilde{s}^N$ vanishes nowhere on $X_t$ (Corollary \ref{nozeros} (i)), up to localizing we may assume $w\in\mathcal{O}_X^*$; in particular, $\tilde{s}$ is not a zerodivisor on $X$ (as this holds for $\pi$). So, for any $g\in\mathcal{O}(-aD/N)$, we have $g/\tilde{s}=(g\tilde{s}^{N-1}w^{-1})\pi^{-a}\in j_*\mathcal{O}_{X_u}$, hence $g/\tilde{s}\in\mathcal{O}_X$ by Lemma \ref{integrallyclosed}. Thus, $\mathcal{O}(-aD/N)\subset\mathcal{O}_X$ is the free $\mathcal{O}_X$-submodule generated by $\tilde{s}$, hence the canonical map $\mathcal{O}(-aD/N)^{\otimes N}\to\mathcal{O}(-aD)$ is an isomorphism (Lemma \ref{rationalcycles} (v)) at the points of $X_t$. Since it is clearly an isomorphism over $X_u$, this implies the assertion.
\end{proof}

\begin{corollary}\label{trivial2}
Assume $H^0(X_0,\mathcal{O}_{X_0})$ is a field. The following are equivalent
\begin{enumerate}[label=\normalfont(\roman*)]
\item $H^0(X,\mathcal{F}_a)\neq 0$
\item $\mathcal{F}_a\cong\mathcal{O}(-aD/N)|_{X_0}$ is a trivial line bundle on $X_0$.
\end{enumerate}
\end{corollary}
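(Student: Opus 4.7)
My plan is to handle (ii)$\Rightarrow$(i) immediately---if $\mathcal{F}_a$ is the trivial line bundle on $X_0$, then $H^0(X,\mathcal{F}_a)=H^0(X_0,\mathcal{O}_{X_0})$ is a nonzero field by assumption---and then to concentrate on the converse. For (i)$\Rightarrow$(ii), given a nonzero $s\in H^0(X,\mathcal{F}_a)$, my strategy is to exhibit $\mathcal{F}_a$ as a line bundle on $X_0$ with $s$ as a nowhere-vanishing global section; any such line bundle is automatically trivial. Non-vanishing is already provided by Corollary \ref{nozeros}(ii), and Corollary \ref{cartier} tells me that $\mathcal{O}(-aD/N)$ is a line bundle on $X$. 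So the real work is to identify $\mathcal{F}_a$ with the restriction $\mathcal{O}(-aD/N)|_{X_0}$.

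By Lemma \ref{rationalcycles}(v), $\mathcal{O}(-D/N)\cdot\mathcal{O}(-aD/N)\subseteq\mathcal{O}(-(a+1)D/N)$ as subsheaves of $j_*\mathcal{O}_{X_u}$, yielding a canonical surjection $\mathcal{O}(-aD/N)|_{X_0}\twoheadrightarrow\mathcal{F}_a$. The main step will be to promote this containment to an equality locally on $X_t$. Using the local generator $\tilde{s}$ of $\mathcal{O}(-aD/N)$ from the proof of Corollary \ref{cartier}, satisfying $\tilde{s}^N=\pi^a\cdot(\text{unit})$, this reduces to checking that for every local section $g$ of $\mathcal{O}(-(a+1)D/N)$, the quotient $h:=g/\tilde{s}\in j_*\mathcal{O}_{X_u}$ lies in $\mathcal{O}(-D/N)$. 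From $\tilde{s}^N=\pi^a\cdot(\text{unit})$ one reads $v_i(\tilde{s})=an_i/N$, so in particular the divisibility $N\mid an_i$ holds; writing $N=n_ik_i$ and separately treating $k_i=1$ and $k_i>1$, a short ceiling-function computation yields $\lceil(a+1)n_i/N\rceil-an_i/N=1=\lceil n_i/N\rceil$, so $v_i(h)\geq\lceil n_i/N\rceil$ for every $i$, as required.

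I expect the only potential obstacle to be the ceiling-function bookkeeping in the final step, but this becomes routine once one notices the arithmetic divisibility $N\mid an_i$ that is automatically forced by the existence of the generator $\tilde{s}$.
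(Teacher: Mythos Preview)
Your argument is correct and follows the same overall architecture as the paper: use Corollary~\ref{cartier} to see that $\mathcal{O}(-aD/N)$ is a line bundle, identify $\mathcal{F}_a$ with its restriction to $X_0$, and then invoke Corollary~\ref{nozeros}(ii) to exhibit a nowhere-vanishing section.

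The one place you diverge is in establishing that the surjection $\mathcal{O}(-aD/N)|_{X_0}\twoheadrightarrow\mathcal{F}_a$ is an isomorphism. The paper argues indirectly: if the kernel $J$ were nonzero at some $\eta_i$, the ceiling inequality $\lceil(a+1)n_i/N\rceil\leq\lceil an_i/N\rceil+1$ would be strict there, forcing $(\mathcal{F}_a)_{\eta_i}=0$ and contradicting the nowhere-vanishing section from Corollary~\ref{nozeros}; then $J_{\eta_i}=0$ for all $i$ together with the line-bundle property gives $J=0$. You instead observe that the local equation $\tilde{s}^N=\pi^a\cdot(\text{unit})$ forces $an_i/N\in\mathbb{Z}$, after which the ceiling computation is immediate and yields the equality $\mathcal{O}(-D/N)\cdot\mathcal{O}(-aD/N)=\mathcal{O}(-(a+1)D/N)$ directly. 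This is a bit cleaner and anticipates the integrality of $aD/N$ that the paper only records later in Lemma~\ref{cartierdivisor}. (Incidentally, your case split $k_i=1$ versus $k_i>1$ is unnecessary: $\lceil 1/k_i\rceil=1$ for every $k_i\geq 1$.)
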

\begin{proof}
Assume (i). As in the proof of Lemma \ref{trivial1}, we have an exact sequence
\[ 0\to J\to \mathcal{O}(-aD/N)|_{X_0}\to\mathcal{F}_a\to 0 \]
On the other hand,
\begin{equation}\label{inequality} \left\lceil\frac{(a+1)n_i}{N}\right\rceil = \left\lceil \frac{an_i}{N}+\frac{n_i}{N}\right\rceil\leq \left\lceil \frac{an_i}{N}\right\rceil+\left\lceil\frac{n_i}{N}\right\rceil=\left\lceil \frac{an_i}{N}\right\rceil +1
\end{equation}
If equality does not hold in (\ref{inequality}), then $\lceil (a+1)n_i/N\rceil=\lceil an_i/N\rceil$, hence $(\mathcal{F}_a)_{\eta_i}=0$. However, $\mathcal{F}_a$ has a global section vanishing nowhere on $X_0$ by Corollary \ref{nozeros}, hence
\[ 0\neq \mathcal{F}_a\otimes_{\mathcal{O}_X}k(\eta_i)=(\mathcal{F}_a)_{\eta_i}\otimes_{\mathcal{O}_{X,\eta_i}}k(\eta_i) \]
a contradiction. Thus, equality holds in (\ref{inequality}) and it follows that $J_{\eta_i}=0$ for all $1\leq i\leq c$. From Corollary \ref{cartier} we deduce $J=0$, hence $\mathcal{F}_a\cong\mathcal{O}(-aD/N)|_{X_0}$ is a line bundle on $X_0$. Now, let $0\neq s\in H^0(X,\mathcal{F}_a)$ and consider the exact sequence
\[ \mathcal{O}_{X_0}\overset{s}{\to}\mathcal{F}_a\to C\to 0 \]
Since $s$ has no zeros (Corollary \ref{nozeros} (ii)) and $\mathcal{F}_a$ is a line bundle, the fibres of $C$ are zero, hence $C=0$ by Nakayama's lemma, proving (ii). The converse is trivial.
\end{proof}

Recall that we have written $D=\Div_X(\pi)=\sum_in_i[X_i]$; then, by Lemma \ref{coprime} (iii), $\gcd(n_1,...,n_c)=d_f$ is the (apparent) multiplicity of $X_t$.

\begin{lemma}\label{cartierdivisor}
Assume $f$ is proper and $f_*\mathcal{O}_X=\mathcal{O}_T$. If $H^0(X,\mathcal{F}_a)\neq 0$, then $aD/N$ is an integral divisorial cycle and there are positive integers $b,e$ with $b<p^e$ and $(b,p)=1$, and an integral divisorial cycle $E$ such that
\begin{enumerate}[label=\normalfont(\roman*)]
\item $p^eE=D$ and $bE=aD/N$
\item $\mathcal{O}_X(-bE)=\mathcal{O}(-aD/N)$ is a line bundle with trivial restriction to $X_0$.
\end{enumerate}
\end{lemma}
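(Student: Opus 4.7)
The plan is to prove the lemma in two main moves. First, I will show that $aD/N$ is already an integral divisorial cycle, which follows quickly by chasing the identifications provided by Lemma \ref{rationalcycles} and Corollary \ref{cartier}. Second, I will compute the order of $\mathcal{O}(-aD/N)$ in $\Pic(X)$ and show it is a power of $p$, from which the integers $b$, $e$ and the cycle $E$ can be read off. The main obstacle will be this second move, which I will attack using Kummer theory and proper base change.

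For the integrality, set $E'' := \lceil aD/N \rceil$, so by Lemma \ref{rationalcycles} (ii) one has $\mathcal{O}(-aD/N) = \mathcal{O}(-E'')$. By Corollary \ref{cartier}, this is a line bundle and the canonical map $\mathcal{O}(-E'')^{\otimes N} \to \mathcal{O}(-aD)$ is an isomorphism. Since $E''$ is integral and $\mathcal{O}(-E'')$ is a line bundle, iterating Lemma \ref{rationalcycles} (v) identifies $\mathcal{O}(-E'')^{\otimes N}$ with $\mathcal{O}(-NE'')$ as subsheaves of $j_*\mathcal{O}_{X_u}$ via multiplication. Hence $\mathcal{O}(-NE'') = \mathcal{O}(-aD)$ as subsheaves, and localizing at each maximal point $\eta_i$ (where both sides become powers of the maximal ideal in the discrete valuation ring $\mathcal{O}_{X,\eta_i}$) forces $NE'' = aD$. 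Thus $aD/N = E''$ is integral; denote it $E'$.

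For the order computation, I replace $T$ by its strict henselization, which is permitted by Proposition \ref{basechange} (ii) and preserves all of the hypotheses. Flat base change and $f_*\mathcal{O}_X = \mathcal{O}_T$ give $H^0(X_u,\mathcal{O}_{X_u}^*) = k(u)^*$, so the image of $\Div_X$ is $\mathbb{Z}\cdot D$ and $E'$ has order $n := N/\gcd(a,N)$ in $C(X)$. Lemma \ref{rationalcycles} (v) shows $\mathcal{O}(-E')^{\otimes k} = \mathcal{O}(-kE')$ for every $k$, which is trivial iff $kE'$ is principal, so the order of $\mathcal{O}(-E')$ in $\Pic(X)$ is also $n$. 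To show $n$ is a power of $p$, pick a prime $\ell \neq p$. Both $\mathcal{O}(T)^*$ (strictly henselian DVR with residue characteristic $p$) and $H^0(X_0,\mathcal{O}_{X_0})^*$---the units of a finite purely inseparable extension of the separably closed $k(t)$, which is itself separably closed (here $H^0(X_0,\mathcal{O}_{X_0})$ is a field because $X_0$ is connected by Stein factorization)---are $\ell$-divisible. The Kummer sequences on $X$ and on $X_0$ therefore give $\Pic(X)[\ell^r] \cong H^1(X,\mu_{\ell^r})$ and $\Pic(X_0)[\ell^r] \cong H^1(X_0,\mu_{\ell^r})$. Proper base change over strictly henselian $T$ combined with the invariance of étale cohomology under the thickening $X_0 \hookrightarrow X_t$ identifies $H^1(X,\mu_{\ell^r})$ with $H^1(X_0,\mu_{\ell^r})$ compatibly with restriction, so $\Pic(X)[\ell^r] \to \Pic(X_0)[\ell^r]$ is an isomorphism. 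Since $\mathcal{O}(-E')|_{X_0}$ is trivial by Corollary \ref{trivial2}, the $\ell$-primary component of its class in $\Pic(X)$ vanishes. Running over all $\ell \neq p$ gives $n = p^e$.

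Writing $a/N = b/p^e$ in lowest terms yields $(b,p) = 1$ and, since $a < N$, $0 < b < p^e$. The identities $bN = ap^e$ and $N \mid an_i$ (integrality of $E'$), together with $(b, p^e) = 1$, force $p^e \mid n_i$ for every $i$. Hence $E := D/p^e$ is an integral divisorial cycle satisfying $p^eE = D$ and $bE = aD/N$, proving (i); statement (ii) is then Corollary \ref{trivial2}.
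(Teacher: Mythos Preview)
Your proof is correct and follows essentially the same strategy as the paper: pass to the strict henselization, use the Kummer sequence together with proper base change to show that the order of $\mathcal{L}=\mathcal{O}(-aD/N)$ in $\Pic(X)$ is a power of $p$, and then read off $b$, $e$, and $E$. The differences are organisational rather than substantive. You establish the integrality of $aD/N$ at the outset by comparing $\mathcal{O}(-N\lceil aD/N\rceil)$ with $\mathcal{O}(-aD)$ as subsheaves of $j_*\mathcal{O}_{X_u}$, whereas the paper postpones this and recovers it from the valuation identity $qa_i=mn_i$ after the order has been analysed. You also compute the order explicitly as $N/\gcd(a,N)$ and write $a/N=b/p^e$ in lowest terms, which makes the extraction of $b,e$ cleaner; the paper instead introduces an auxiliary exponent $e_2$ with $p^{e_2}\parallel m$ (which, by your computation, is in fact always zero, since $\gcd(m,q)=1$). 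One small point of presentation: you invoke Corollaries~\ref{cartier} and~\ref{trivial2} in your integrality step before remarking that $H^0(X_0,\mathcal{O}_{X_0})$ is a field; it would be tidier to note this hypothesis (which follows from $f_*\mathcal{O}_X=\mathcal{O}_T$ and connectedness of fibres) up front.
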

\begin{proof}
Note that, since $f$ is proper and $f_*\mathcal{O}_X=\mathcal{O}_T$, $f$ has connected fibres (\cite[III\textsubscript{1}, 4.3.2]{ega}), hence $H^0(X_0,\mathcal{O}_{X_0})$ is a field. By Corollary \ref{cartier}, $\mathcal{L}:=\mathcal{O}(-aD/N)$ is a line bundle of finite order dividing $N$, which has trivial restriction to $X_0$ by Corollary \ref{trivial2}.

First assume $T$ is strictly local. Let $n\mid N$ be the order of $\mathcal{L}$. Write $n=qn'$ with $(n',p)=1$ and $q=p^{e_1}$ for some $e_1\in\mathbb{N}$. Since $T$ is strictly local and $f$ proper, the Kummer sequence implies $H^1(X,\mu_{n'})\cong\Pic(X)[n']$ and similarly for $X_0$. From the proper base change theorem (\cite[Exp. XII, 5.5]{sga4.3}), it follows that $\Pic(X)[n']\cong\Pic(X_0)[n']$. By Corollary \ref{trivial2}, we deduce that $\mathcal{L}^{\otimes q}$ is a trivial line bundle, hence $n=q$.

Since $H^0(X,\mathcal{L}^{\otimes q})\subsetneq H^0(X,\mathcal{O}_X)=\mathcal{O}(T)$, it follows that $\mathcal{L}^{\otimes q}=\pi^m\mathcal{O}_X$ for some positive integer $m$. Writing $N=ql$ for some positive integer $l$, by Corollary \ref{cartier} we have $ml=a<N$, hence $m<q$. In particular, $q\nmid m$.

Now, returning to the case of arbitrary $T$, for each $1\leq i\leq c$, let $a_i=v_i(s_i)$, where $s_i\in\mathcal{L}_{\eta_i}$ is a generator. Since $s_i^q$ generates $\pi^m\mathcal{O}_{X,\eta_i}$ (this can be checked over the strict henselisation of $\mathcal{O}(T)$), we have $qa_i=mn_i$. Multiplying by $l$ and dividing by $N$, we find $a_i=an_i/N$. Thus, $aD/N$ is an integral divisorial cycle.

Let $e_2\in\mathbb{N}$ be such that $p^{e_2}\parallel m$. Then, $e_1>e_2$ and $p^{e_1-e_2}\mid n_i$ for $1\leq i\leq c$, hence $p^{e_1-e_2}\mid d_f$. So, setting $e:=e_1-e_2$ and $b:=\frac{m}{p^{e_2}}$, $E:=\frac{1}{p^e}D$ is an integral divisorial cycle with the required properties.
\end{proof}

\begin{corollary}\label{cartierdivisortrivial}
Assume $f$ proper and $f_*\mathcal{O}_X=\mathcal{O}_T$. If $p\nmid d_f$, then $H^0(X,\mathcal{F}_a)=0$ for $0<a<N$.
\end{corollary}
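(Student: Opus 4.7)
The plan is to argue by contradiction using Lemma \ref{cartierdivisor}, which already does the heavy lifting. Suppose for some $0<a<N$ we have $H^0(X,\mathcal{F}_a)\neq 0$. Then Lemma \ref{cartierdivisor} produces positive integers $b,e$ with $b<p^e$ and $(b,p)=1$, together with an integral divisorial cycle $E$ supported on $X_t$ such that $p^eE=D$.

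Since $b\geq 1$ and $b<p^e$, we must have $p^e\geq 2$, hence $e\geq 1$. Writing $E=\sum_i m_i[X_i]$ with $m_i\in\mathbb{Z}$, the identity $p^eE=D=\sum_i n_i[X_i]$ gives $n_i=p^e m_i$ for every $1\leq i\leq c$. In particular, $p$ divides each $n_i$, so $p$ divides $d_f=\gcd(n_1,\dots,n_c)$ (using Lemma \ref{coprime} (iii) as invoked just before Lemma \ref{cartierdivisor}), contradicting the hypothesis $p\nmid d_f$.

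There is no real obstacle here: all the work has been done in Lemma \ref{cartierdivisor}, and the corollary is just the observation that the existence of the cycle $E$ with $p^eE=D$ and $e\geq 1$ forces $p\mid d_f$. The only mild subtlety is to notice that the inequality $b<p^e$ combined with $b\geq 1$ rules out $e=0$, which is what makes the divisibility by $p$ (rather than by $1$) meaningful.
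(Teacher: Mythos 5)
Your argument is correct and is essentially the paper's own proof: the paper also deduces from Lemma \ref{cartierdivisor} that $p^e\mid d_f$ for some $e>0$ (since $p^eE=D$ with $E$ integral forces $p^e$ to divide every $n_i$, hence $d_f=\gcd(n_1,\dots,n_c)$), contradicting $p\nmid d_f$. Your extra remark that $b\geq 1$ and $b<p^e$ rule out $e=0$ is fine but not needed, since the lemma already asserts $e$ is a positive integer.
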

\begin{proof}
Indeed, if $H^0(X,\mathcal{F}_a)\neq 0$, then $p^e\mid d_f$ by Lemma \ref{cartierdivisor} for some integer $e>0$.
\end{proof}

\begin{proof}[Proof of Proposition \ref{vanishingcriterion}]
For a divisorial cycle $E\geq 0$, we will also denote by $E$ the closed subscheme of $X$ of ideal sheaf $\mathcal{O}(-E)$.

Let $H:=H^0(X,\mathcal{O}(-D/N)/\mathcal{O}(-D))$. Then,  by (\ref{graded}), $H$ has a finite filtration with $\gr^aH\subset H^0(X,\mathcal{F}_a)$. If (i) holds, then the exact sequence
\[ 0\to\mathcal{O}(-D/N)/\mathcal{O}(-D)\to\mathcal{O}_{D}\to\mathcal{O}_{D/N}\to 0 \]
implies (ii), hence $\gr^aH\neq 0$ for some $0<a<N$. Then, by Lemma \ref{cartierdivisor}, $aD/N$ is an integral divisiorial cycle and $\mathcal{O}(-aD/N)$ is a line bundle. By Lemma \ref{rationalcycles} (v), we have
\[ \mathcal{O}(-aD/N)/\mathcal{O}(-D)\cong\mathcal{O}(-aD/N)|_{D-aD/N} \]
and, by Corollary \ref{nozeros} (ii), a nonzero element of $\gr^aH$ provides a nowhere vanishing global section, hence $\mathcal{O}(-aD/N)|_{D-aD/N}$ is a trivial line bundle. Since
\[ \mathcal{O}(-(N-a)D/N)\cong\mathcal{O}(-aD/N)^{\vee}\otimes_{\mathcal{O}_X}\mathcal{O}(-D) \]
(Lemma \ref{rationalcycles} (v)), it follows that $C:=(N-a)D/N$ satisfies $\mathcal{O}(-C)|_C\simeq\mathcal{O}_C$. Moreover, by Lemma \ref{cartierdivisor} we have $rC=D$, where $r=p^e/(p^e-b)>1$, hence (iii). Note that, since $1\leq b<p^e$ and $(b,p)=1$, $m:=p^e-b$ is a positive integer not divisible by $p$.

Now assume (iii) and let us deduce (i). Let $q,n\in\mathbb{N}$ be such that $r=q/n$. Then, $qC=nD$, so $\mathcal{O}(-C)^{\otimes q-1}|_C\subset\mathcal{O}_{nD}$ and, if $s\in H^0(C,\mathcal{O}(-C)|_C)$ is a generator, then $0\neq s^{q-1}\in H^0(X,\mathcal{O}_{nD})$. We claim that $s^{q-1}=\pi^{n-1}g$ for some $g\in H^0(X,\mathcal{O}_D)$. Let $\tilde{s}\in\mathcal{O}(-C)$ be a local lift; then $\tilde{s}^q=\pi^nw$ for some unit $w\in\mathcal{O}_X^*$. So, $\tilde{g}:=w\pi/\tilde{s}=\tilde{s}^{q-1}\pi^{1-n}\in j_*\mathcal{O}_{X_u}$ satisfies $\Div_X(\tilde{g})=D-C=(r-1)C>0$, hence $\tilde{g}\in\mathcal{O}_X$. Taking $g$ to be the reduction of $\tilde{g}$ to $\mathcal{O}_D$, we have $s^{q-1}=\pi^{n-1}g$ as claimed. Thus, $s^{q-1}$ is a global section of the subsheaf $\pi^{n-1}\mathcal{O}_X/\pi^n\mathcal{O}_X\subset\mathcal{O}_{nD}$. Since multiplication by $\pi^{n-1}$ induces an isomorphism $\mathcal{O}_D\cong\pi^{n-1}\mathcal{O}_X/\pi^n\mathcal{O}_X$, this implies $g\in H^0(X,\mathcal{O}_D)$. Moreover, we have $\tilde{g}^q=w^{q-1}\pi^{n-q}$, hence $g^q=\pi^{q-n}\bar{w}^{q-1}$, where $\bar{w}\in\mathcal{O}_D^*$ is the reduction of $w$. Since $r>1$, we have $q-n>0$, so $g^q=0$ in $\mathcal{O}_D$, whence (i).
\end{proof}

\begin{proof}[Proof of Theorem \ref{gabberraynaudindex}]
We first reduce to the case $f_*\mathcal{O}_X=\mathcal{O}_T$. By Lemma \ref{steinreduction}, $H^0(X,\mathcal{O}_X)$ is a finite normal $\mathcal{O}(T)$-algebra. Note that $H^0(X,\mathcal{O}_X)\otimes_{\mathcal{O}(T)}k(t)$ is connected since $X_t$ is, hence $H^0(X,\mathcal{O}_X)$ is local, whence a discrete valuation ring. This implies that the generic fibre of $X\to\Spec(H^0(X,\mathcal{O}_X))$ is $X_u$ and its special fibre is homeomorphic to $X_t$. It follows that the index of the generic fibre of $X\to\Spec(H^0(X,\mathcal{O}_X))$ divides that of the generic fibre of $X\to T$, hence is prime to $p$. Thus, by Lemma \ref{steinreduction}, we may assume $f_*\mathcal{O}_X=\mathcal{O}_T$; then $f$ has geometrically connected fibres.

Now, by the assumption on the index, there is some closed point $x\in X_u$ such that $[k(x):k(u)]$ is not divisible by $p$. In particular, $k(u)\subset k(x)$ is separable, so the normalization $T'$ of $T$ in $\Spec(k(x))$ is finite over $T$. The degree of $T'/T$ being prime to $p$ implies that there must be some closed point $t'\in T'$ whose ramification index and residual degree are prime to $p$. So, over the strict localization $T^{\sh}$ of $T$, we deduce that there exists a point of $X$ with values in a finite, tamely ramified extension of $T^{\sh}$.

Now, to show that $f$ is cohomologically flat we may, by Proposition \ref{basechange} (ii) and Lemma \ref{coprime} (vi), assume $T=T^{\sh}$. Let $T'\to T$ be a finite tamely ramified extension such that $X(T')\neq\emptyset$ and $t'\in T'$ the closed point. A quasi-section $\sigma:T'\to X$ induces a morphism $t'\to X_0$, hence a homomorphism $H^0(X_0,\mathcal{O}_{X_0})\to k(t')=k(t)$. Since $X_t$ is connected, $H^0(X_0,\mathcal{O}_{X_0})$ is a field, so $H^0(X_0,\mathcal{O}_{X_0})\subset k(t)$. Hence, $H^0(X_0,\mathcal{O}_{X_0})=k(t)$ and it suffices to show that $H^0(X_t,\mathcal{O}_{X_t})$ is reduced, for then it must equal $k(t)$ and $H^0(X,\mathcal{O}_X)\to H^0(X_t,\mathcal{O}_{X_t})$ is surjective, so $f$ is cohomologically flat by Proposition \ref{basechange}. If not, then, by Proposition \ref{vanishingcriterion}, $mD=p^eC$ for $C$ an effective Cartier divisor, where $(m,p)=1$ and $e>0$. Let $z=\sigma(t')\in X$. If $s\in\mathcal{O}(-C)_z$ is a local generator, then $s^{p^e}=\pi^nw$ for some unit $w\in\mathcal{O}_{X,z}^*$. Hence, the image of $s$ in $\mathcal{O}_{T',t'}$ has valuation $im/p^e$, where $i$ is the ramification index of $T'/T$. Since $(im,p)=1$, this forces $e=0$, a contradiction.
\end{proof}

We deduce a higher-dimensional generalization of Raynaud's Theorem \ref{raynaudintro}.

\begin{corollary}\label{raynaud2}
Let $f:X\to T$ be a proper and flat morphism satisfying (N). If the special fibre of $f$ is connected and $p\nmid\delta_f$, then $f$ is cohomologically flat.
\end{corollary}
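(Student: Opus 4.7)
The plan is to adapt the proof of Theorem~\ref{gabberraynaudindex}, replacing its use of a closed point of $X_u$ of residue degree prime to $p$ by a quasi-section constructed from a geometrically reduced component of $X_t$ of apparent multiplicity prime to $p$, which is provided by Lemma~\ref{coprime}(ii). First I perform the initial reductions: by Stein factorization (Lemma~\ref{steinreduction}) reduce to $f_*\mathcal{O}_X=\mathcal{O}_T$, then pass to $T^{\sh}$ via Proposition~\ref{basechange}(ii). The hypothesis $p\nmid\delta_f$ is preserved throughout: by Lemma~\ref{coprime}(iv) the Stein ramification index $e$ satisfies $\delta_f^{\mathrm{old}}=e\,\delta_f^{\mathrm{new}}$, so $p\nmid e$ and $p\nmid\delta_f^{\mathrm{new}}$; by Lemma~\ref{coprime}(vi) the base change to $T^{\sh}$ does not change $\delta_f$. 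Condition~(N) is preserved by Lemmas~\ref{steinreduction}(iii) and~\ref{complete}, and after Stein $f$ has geometrically connected fibres, so its special fibre stays connected over the strictly henselian $T^{\sh}$, whose residue field is separably closed.

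By Lemma~\ref{coprime}(ii), I then choose an irreducible component $X_i\subset X_t$ of apparent multiplicity $d_i$ prime to $p$ that is geometrically reduced over $k(t)$. Its $k(t)$-smooth locus is dense, and since $k(t)$ is separably closed, its closed points there have residue field $k(t)$. The regular locus of $X$ is open (excellence) and contains $\eta_i$, and each $X_j$ with $j\neq i$ meets $X_i$ in a proper closed subset. Intersecting these dense open subsets of $X_i$, I pick $\xi\in X_i$ such that $X$ is regular at $\xi$, $X_i$ is $k(t)$-smooth at $\xi$, $X_i$ is the only component of $X_t$ through $\xi$, and $k(\xi)=k(t)$. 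Then $\mathcal{O}_{X,\xi}$ is regular local, the ideal of $X_i$ there is principal of the form $(\pi_i)$ with $\pi=u\pi_i^{d_i}$, $u\in\mathcal{O}_{X,\xi}^*$, and extending a regular system of parameters of $\mathcal{O}_{X_i,\xi}$ to $t_1,\dots,t_n\in\mathcal{O}_{X,\xi}$ gives a regular system $(\pi_i,t_1,\dots,t_n)$. Lifting the $t_i$ to sections on an affine neighborhood $U\ni\xi$ and taking the closure in $X$ of $V(\tilde t_1,\dots,\tilde t_n)\cap U$, I extract the irreducible component $Z_1$ through $\xi$. Since $\mathcal{O}_{Z_1,\xi}$ is a DVR with uniformizer $\pi_i$ (quotient of $\mathcal{O}_{X,\xi}$ by a regular sequence of length $n$), $Z_1$ is integral, $1$-dimensional, dominates $T$, and is proper, hence finite over $T$; strict henselianness of $\mathcal{O}(T)$ then forces $Z_1$ to be local, so $Z_1=\Spec R$ with $R$ a DVR finite over $\mathcal{O}(T)$ of ramification index $d_i$ and residue field $k(t)$. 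Setting $T'=\Spec R$, the inclusion $T'=Z_1\hookrightarrow X$ is the desired quasi-section.

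Finally, I conclude as in the last paragraph of the proof of Theorem~\ref{gabberraynaudindex}: the quasi-section $\sigma:T'\to X$ yields $H^0(X_0,\mathcal{O}_{X_0})\hookrightarrow k(t')=k(t)$, forcing $H^0(X_0,\mathcal{O}_{X_0})=k(t)$ by connectedness of $X_t$, so it suffices to show $H^0(X_t,\mathcal{O}_{X_t})$ is reduced. Otherwise, Proposition~\ref{vanishingcriterion} produces an effective Cartier divisor $C\subset X$ with $mD=p^eC$, $(m,p)=1$ and $e>0$; a local generator $s$ of $\mathcal{O}(-C)$ at $\xi=\sigma(t')$ satisfies $s^{p^e}=\pi^mw$ with $w\in\mathcal{O}_{X,\xi}^*$, so pullback via $\sigma^*$ gives $p^e v_{T'}(s)=md_i$, contradicting $(md_i,p)=1$.

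The main technical step I foresee is the construction of the finite $T$-scheme $T'$ from the local data at $\xi$: this combines properness of $X$ (to close up the affine cut-out $V(\tilde t_i)\cap U$), the DVR structure of $\mathcal{O}_{Z_1,\xi}$ (ensuring $Z_1$ is horizontal and $1$-dimensional), and strict henselianness of $T$ (forcing the integral $Z_1$ to be local, so already normal and a DVR at its unique closed point).
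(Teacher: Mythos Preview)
Your approach is correct in spirit but differs from the paper's, and there is one minor gap to flag.

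\textbf{Comparison with the paper.} The paper's proof is much shorter: after the same reductions (Stein factorization and passage to $T^{\sh}$), it simply invokes \cite[7.1.6~(1)]{raynaud}, which asserts that over a strictly local base with $f_*\mathcal{O}_X=\mathcal{O}_T$ the index of $X_u$ divides $\delta_f$. Since $p\nmid\delta_f$, the index is prime to $p$, and Theorem~\ref{gabberraynaudindex} applies directly. Your argument instead unpacks the relevant content of Raynaud's result by hand: from a geometrically reduced component of multiplicity prime to $p$ (Lemma~\ref{coprime}(ii)) you build a tame quasi-section $T'\hookrightarrow X$ explicitly, then rerun the endgame of Theorem~\ref{gabberraynaudindex}. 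This buys self-containment at the cost of length; the paper's route buys brevity by citing Raynaud.

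\textbf{A gap.} Your parenthetical ``(excellence)'' to justify that the regular locus of $X$ is open is not warranted: the base is only assumed to be an arbitrary discrete valuation ring, and after strict henselization it need not be excellent. Without openness of $\text{Reg}(X)$ you cannot guarantee a closed point $\xi\in X_i$ at which $X$ is regular, and regularity at $\xi$ is genuinely used (to get the factorization $\pi=u\pi_i^{d_i}$ in $\mathcal{O}_{X,\xi}$ and to ensure $\mathcal{O}_{X,\xi}/(\tilde t_1,\dots,\tilde t_n)$ is a DVR). The fix is easy: in your reduction step, also pass to the completion of $T^{\sh}$. Completion of a DVR is faithfully flat, so cohomological flatness descends (Proposition~\ref{basechange}(ii)); condition~(N), $\delta_f$, $f_*\mathcal{O}_X=\mathcal{O}_T$, and connectedness of $X_t$ are all preserved (Lemmas~\ref{complete} and~\ref{coprime}(vi), flat base change). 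A complete noetherian local ring is excellent, so $\text{Reg}(X)$ is then open and your construction goes through unchanged.
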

\begin{proof}
Arguing as in the proof of Theorem \ref{gabberraynaudindex}, by Lemma \ref{coprime} (iv) we reduce to the case $f_*\mathcal{O}_X=\mathcal{O}_T$. By Proposition \ref{basechange} (ii), Lemma \ref{coprime} (vi) and Lemma \ref{complete}, we may also assume $T$ is strictly local. In this case, the index of $X_u/u$ divides $\delta_f$ by \cite[7.1.6 (1)]{raynaud}, hence is prime to $p$ and the result follows from Theorem \ref{gabberraynaudindex}.
\end{proof}

\begin{remark}
For $f$ projective and $S_2$, one can also prove Corollary \ref{raynaud2} by induction on the dimension of $X$, the base case being Raynaud's Theorem \ref{raynaudintro}, with a suitably chosen hyperplane section for the induction step.
\end{remark}

\begin{remark}
In the proof of Corollary \ref{raynaud2}, we used the fact the index of $X_u/u$ divides $\delta_f$ if $T$ is strictly local and $f_*\mathcal{O}_X=\mathcal{O}_T$. If, in addition, both $X$ and the irreducible components of $X_t$ are regular, then it follows from \cite[8.2]{GLL} that the index of $X_u/u$ is equal to $\delta_f$ (use \cite[7.1.1]{raynaud} to show that the index of an irreducible component of $X_t$ is equal to its multiplicity).
\end{remark}

We also show a variant with the index of the generic fibre replaced by the gcd of the multiplicities of the special fibre. Recall that $X_0=(X_t)_{\red}$.

\begin{theorem}\label{gabberraynaud}
Let $T$ be the spectrum of a discrete valuation ring and $f:X\to T$ a proper and flat morphism satisfying $(N)$. If $H^0(X_0,\mathcal{O}_{X_0})=k(t)$ and $p\nmid d_f$, then $f$ is cohomologically flat in dimension 0.
\end{theorem}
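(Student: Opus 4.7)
The plan is to mirror the structure of the proof of Theorem \ref{gabberraynaudindex}: first perform a Stein reduction to the case $f_*\mathcal{O}_X=\mathcal{O}_T$, and then run the ``graded pieces'' vanishing machinery built up in Corollaries \ref{cartier}--\ref{cartierdivisortrivial} and Proposition \ref{vanishingcriterion}. The bulk of the work has already been carried out in establishing Corollary \ref{cartierdivisortrivial}, so the argument here is essentially a direct assembly of existing results.

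\emph{Step 1: Stein reduction.} Since $H^0(X_0,\mathcal{O}_{X_0})=k(t)$ is a field, $X_0$ is connected, hence so is $X_t$. Let $X\overset{f'}{\to}T'\to T$ be the Stein factorization. By Lemma \ref{steinreduction}, $T'$ is the spectrum of a semilocal normal one-dimensional ring and $f'$ is proper, flat, and satisfies (N); since $X_t$ is connected, $T'$ is local, hence the spectrum of a discrete valuation ring. Writing $t'\in T'$ for the closed point, the chain $k(t)\hookrightarrow k(t')\hookrightarrow H^0(X_0,\mathcal{O}_{X_0})=k(t)$ shows $k(t')=k(t)$, so the hypothesis $H^0(X_0,\mathcal{O}_{X_0})=k(t')$ is preserved for $f'$. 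If $e$ denotes the ramification index of $T'/T$, then Lemma \ref{coprime} (iv) gives $d_f=e\,d_{f'}$, so $d_{f'}\mid d_f$ and $p\nmid d_{f'}$. By Proposition \ref{basechange} (iii), $f$ is cohomologically flat iff $f'$ is. We may thus replace $f$ by $f'$ and assume from now on that $f_*\mathcal{O}_X=\mathcal{O}_T$.

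\emph{Step 2: Vanishing of the graded pieces.} Since $p\nmid d_f$ and $f_*\mathcal{O}_X=\mathcal{O}_T$, Corollary \ref{cartierdivisortrivial} yields $H^0(X,\mathcal{F}_a)=0$ for every $0<a<N$. The filtration \eqref{powerfiltration} induces a finite descending filtration on $H:=H^0(X,\mathcal{O}(-D/N)/\mathcal{O}(-D))$ whose graded pieces inject into the groups $H^0(X,\mathcal{F}_a)$ by \eqref{graded}. Therefore $H=0$.

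\emph{Step 3: Conclusion via Proposition \ref{vanishingcriterion}.} The equivalence (i)$\Leftrightarrow$(ii) of Proposition \ref{vanishingcriterion} together with the vanishing of $H$ forces $H^0(X_t,\mathcal{O}_{X_t})$ to be reduced. Since $X_0\hookrightarrow X_t$ is a closed immersion with nilpotent ideal, the reducedness of $H^0(X_t,\mathcal{O}_{X_t})$ produces an inclusion $H^0(X_t,\mathcal{O}_{X_t})\hookrightarrow H^0(X_0,\mathcal{O}_{X_0})=k(t)$, and combined with the inclusion $k(t)\subseteq H^0(X_t,\mathcal{O}_{X_t})$ we get $H^0(X_t,\mathcal{O}_{X_t})=k(t)$. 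Hence the stalk map $(f_*\mathcal{O}_X)_t=\mathcal{O}_{T,t}\to H^0(X_t,\mathcal{O}_{X_t})=k(t)$ is surjective, and Proposition \ref{basechange} (v) concludes that $f$ is cohomologically flat.

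There is no real obstacle here since the serious content — the passage from sections of $\mathcal{F}_a$ to honest Cartier divisors of $p$-power denominator via $\varphi_N$ — was already packaged into Corollary \ref{cartierdivisortrivial}. The only delicate point is the bookkeeping in Step 1, namely verifying that the Stein factorization preserves the hypotheses $p\nmid d_f$ and $H^0(X_0,\mathcal{O}_{X_0})=k(t)$; this in turn rests on the divisibility $d_{f'}\mid d_f$ from Lemma \ref{coprime} (iv) and on the identification $k(t)=k(t')$ forced by $H^0(X_0,\mathcal{O}_{X_0})$ being a field.
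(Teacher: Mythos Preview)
Your proof is correct and follows essentially the same route as the paper: Stein reduction to $f_*\mathcal{O}_X=\mathcal{O}_T$, then Corollary \ref{cartierdivisortrivial} to kill the graded pieces, and finally Proposition \ref{vanishingcriterion} to conclude reducedness of $H^0(X_t,\mathcal{O}_{X_t})$. The only minor quibble is that Proposition \ref{basechange} (iii) as stated gives only one implication (which is the one you need), so for the ``iff'' you should cite Lemma \ref{steinreduction} (iv) instead; otherwise your Step 1 spells out exactly what the paper abbreviates as ``arguing as in the proof of Corollary \ref{raynaud2}''.
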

\begin{proof}
Since $H^0(X_0,\mathcal{O}_{X_0})=k(t)$, it suffices to show that $H^0(X_t,\mathcal{O}_{X_t})$ is reduced. Note that $X_t$ is connected since $H^0(X_0,\mathcal{O}_{X_0})$ is. Arguing as in the proof of Corollary \ref{raynaud2}, we reduce to the case $f_*\mathcal{O}_X=\mathcal{O}_T$. In this case, by Corollary \ref{cartierdivisortrivial} we have $H^0(X,\mathcal{F}_a)=0$ for $0<a<N$. Thus, $H^0(X,\mathcal{O}(-D/N)/\mathcal{O}(-D))=0$ by (\ref{graded}) and $H^0(X_t,\mathcal{O}_{X_t})$ is reduced by Proposition \ref{vanishingcriterion}.
\end{proof}

\begin{remark}
The first named author can generalize Theorem \ref{gabberraynaud} to formal schemes over (not necessarily discrete) valuation rings.
\end{remark}

\section{Logarithmic criterion}\label{logsection}
We begin with some preliminaries that will be of use later on in the section.

\begin{definition}\label{ncdscheme}
Let $S$ be a scheme. A \emph{normal crossings scheme over $S$} is a morphism of locally finite presentation $D\to S$ such that, locally for the \'etale topology, $D$ is the scheme-theoretic union of closed subschemes $D_1,...,D_r\subset D$ and there is an integer $d$ such that, for all $J\subset\{1,...,r\}$, $D_J:=\cap_{j\in J}D_j$ is a smooth $S$-scheme of relative dimension $d+1-|J|$.
\end{definition}

\begin{lemma}\label{semistablelemma}
Let $S\to T$ be a morphism of spectra of discrete valuation rings such that the closed fibre $S_t$ is the spectrum of a field. If $X$ is a regular, proper and flat $T$-scheme and $X_0:=(X_t)_{\red}$ is a normal crossings scheme over $t$, then $X\times_TS$ is regular and $X_0\times_TS\subset X\times_TS$ is a normal crossings divisor.
\end{lemma}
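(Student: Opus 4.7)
The plan is to verify regularity of $X' := X \times_T S$ and the normal crossings property of $X_0' := X_0 \times_T S$ étale-locally at closed points of $X'$. Since $X$ is Noetherian and proper over $T$, both assertions reduce to the following task: for each closed point $x \in X_t = |X_0|$ and each closed point $x' \in X'$ above $x$, produce a regular system of parameters of $\mathcal{O}_{X',x'}$ that contains local generators for the ideal of $X_0'$.

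The main preparatory step is to extract a local model for $X$ at $x$. After passing to an étale neighborhood, the definition of a normal crossings scheme furnishes components $X_1, \dots, X_r$ of $X_0$ through $x$, each smooth over $t$, with all intersections $\bigcap_{j \in J} X_j$ smooth of the expected dimension. Choosing local equations $f_i$ for $X_i$ and using that $\bigcap_{j=1}^r X_j$ is regular of the correct dimension, I show that the $f_i$ form part of a regular system of parameters of $R := \mathcal{O}_{X,x}$, which I extend to $f_1, \dots, f_r, g_1, \dots, g_s$. Since $R$ is a regular local ring, hence a UFD, the equality of effective Cartier divisors $\Div(\pi) = \sum_i m_i [X_i]$ on $X$ (with $m_i$ the multiplicity of $X_i$ in $X_t$) upgrades to the identity $\pi = u f_1^{m_1} \cdots f_r^{m_r}$ in $R$ for some $u \in R^\ast$.

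The remaining hypotheses feed in as follows. Since $S_t$ is the spectrum of a field, the ring map $\mathcal{O}(T) \to \mathcal{O}(S)$ sends a uniformizer to a uniformizer; in particular it is flat, with $\mathcal{O}(S)/\pi = k$, the residue field of $S$. Smoothness of the components $X_i$ over $t$ implies that the finite extension $k(x)/k(t)$ is separable. Consequently, the fiber of $R' := R \otimes_{\mathcal{O}(T)} \mathcal{O}(S)$ over $\mathfrak{m}_x$ is $k(x) \otimes_{k(t)} k$, a finite étale $k$-algebra, hence a finite product of fields.

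For any prime $\mathfrak{p}$ of $R'$ above $\mathfrak{m}_x$, the local ring $R'_\mathfrak{p}$ is flat and local over $R$ with residual fiber a field, so by the dimension formula $\dim R'_\mathfrak{p} = \dim R$, and its maximal ideal is generated by the images of $f_1, \dots, f_r, g_1, \dots, g_s$. Hence $R'_\mathfrak{p}$ is regular with this regular system of parameters, and $X_0'$ is cut out locally by $f_1 \cdots f_r$, exhibiting it as a normal crossings divisor in $X'$. I expect the main subtlety to be separability of $k(x)/k(t)$: mere regularity of the components would fail to give this if $k/k(t)$ were inseparable, and it is precisely the smoothness assumption built into Definition \ref{ncdscheme} that secures it.
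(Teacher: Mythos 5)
Your local computation is sound at the points it treats, and there it even delivers the normal crossings statement that the paper leaves to the reader; the genuine problem is the opening reduction. Properness of $X':=X\times_TS$ over $S$ does put every closed point of $X'$ in the special fibre $X'_s=X_t\otimes_{k(t)}k(s)$, and regularity at all closed points does propagate to all points by generization. What is false in general is that every closed point of $X'$ lies over a \emph{closed} point of $X_t$. The hypothesis that $S_t$ is the spectrum of a field only says that a uniformizer of $T$ maps to a uniformizer of $S$; the residue extension $k(s)/k(t)$ may be transcendental (the lemma does not assume it algebraic, and the companion Lemma \ref{acreduction} allows an arbitrary residue extension, even though the paper's eventual applications only use an algebraic closure). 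For instance, if $k(s)=k(t)(\lambda)$ and $X=\mathbb{P}^1_T$, the $k(s)$-rational point $[\lambda:1]$ of $X'_s$ is closed in $X'$ but lies over the generic point of $X_t$. Such points are simply missing from your case analysis.

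Moreover, at those points the method itself fails, not just the bookkeeping. For a non-closed point $x$ on a smooth component of $X_0$, the residue field $k(x)$ need not be separable over $k(t)$ (take the generic point of the curve $v^p=au^p$ inside a smooth surface, with $a\notin k(t)^p$), so the fibre ring $k(x)\otimes_{k(t)}k(s)$ can fail to be a field and can even be non-reduced; then the maximal ideal of $\mathcal{O}_{X',x'}$ is not generated by the images of a regular system of parameters of $\mathcal{O}_{X,x}$, and regularity cannot be read off from ``flat over $\mathcal{O}_{X,x}$ with regular fibre''. One needs the idea the paper uses instead: each $X_i\times_TS$ is smooth over the field $S_t$, hence regular, and it is cut out in $X'$ at $x'$ by a single nonzerodivisor (a factor of $\pi$), so $\mathcal{O}_{X',x'}$ is regular because it has a regular quotient by a regular sequence of length one; the paper then globalizes via openness of the regular locus and properness, while your closed-point/generization device would work just as well once the local statement is known at \emph{all} closed points of $X'$. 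As written, your proof is complete only under the additional assumption that $k(s)/k(t)$ is algebraic, which covers the paper's applications but not the lemma as stated.
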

\begin{proof}
Note that $S\to T$ is flat. We claim that $X\times_TS$ is regular at the points of its special fibre. This is local for the \'etale topology on $X$ and so we may assume $X_0=\cup_{i=1}^rX_i$ with $X_i\to t$ smooth. Then $X_i\times_TS$ is smooth over $S_t$, hence regular and, since $X$ is regular and $S\to T$ is flat, the closed immersions $X_i\times_TS\to X\times_TS$ are regular (\cite[IV\textsubscript{4}, 19.1.5]{ega}). So $X\times_TS$ is regular along $X_i\times_TS$ for $1\leq i\leq r$ (\cite[IV\textsubscript{4}, 19.1.1]{ega}), hence the claim. Now, since the regular locus of an $S$-scheme of finite type is open (\cite[IV\textsubscript{2}, 6.12.6]{ega}) and $X\times_TS\to S$ is proper, it follows that $X\times_TS$ is regular. We leave the other claim to the reader.
\end{proof}

\begin{lemma}\label{acreduction}
Let $T$ be a noetherian local scheme of closed point $t$ and $M_T$ an fs Zariski log structure on $T$ such that $(T,M_T)$ is log regular. For a field extension $k(t)\subset k(s)$, there is strict morphism of log regular schemes $(S,M_{S})\to (T,M_T)$ with $S$ local noetherian and $S\to T$ faithfully flat such that $k(s)\cong\mathcal{O}(S)\otimes_{\mathcal{O}(T)}k(t)$. Then,
\begin{enumerate}[label=\normalfont(\roman*)]
\item $f_{\log}$ is log smooth if and only if $f_{\log}\times_{(T,M_T)}(S,M_{S})$ is
\item if $f$ is proper and flat, $f$ is cohomologically flat if and only if $f\times_TS$ is.
\end{enumerate}
Moreover, $\mathcal{O}(S)$ may be taken to be henselian or complete.
\end{lemma}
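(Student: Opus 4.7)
The plan is to produce $\mathcal{O}(S)$ as a flat local noetherian extension of $\mathcal{O}(T)$ with residue field $k(s)$ and geometrically regular fibres, then set $M_S := f^\ast M_T$ for the structural morphism $f\colon S \to T$. With this definition, $(S, M_S) \to (T, M_T)$ is strict by construction, and $S \to T$ is faithfully flat since $S$ is local with closed point lying above $t$. For the existence of $\mathcal{O}(S)$, I would invoke the classical result (EGA $0_{\mathrm{III}}$, 10.3.1): any noetherian local ring $A$ with residue field $k$ admits, for every field extension $k \subset k'$, a flat complete local noetherian extension $A \to A'$ with residue field $k'$, formally smooth over $A$ and hence with geometrically regular fibres. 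The henselian variant is produced instead by taking $\mathcal{O}(S) = R^h$, where $R$ is the localisation at a prime over $\mathfrak{m}_T$ of a suitable polynomial algebra over $\mathcal{O}(T)$ whose residue field realises $k(s)$, built by separately handling the purely transcendental and algebraic parts of $k(s)/k(t)$ and lifting minimal polynomials in the algebraic case.

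The main input is to verify log regularity of $(S, M_S)$. For this I would apply the standard preservation theorem of Kato: log regularity is stable under strict base change along a flat morphism of noetherian schemes with geometrically regular fibres. Concretely, at a point $s \in S$ over $t \in T$, strictness of $S \to T$ yields $I(M_S)_s = I(M_T)_t \mathcal{O}_{S,s}$ and $\rk \overline{M}_{S, \bar{s}}^{\gp} = \rk \overline{M}_{T, \bar{t}}^{\gp}$; then regularity of the quotient $\mathcal{O}_{S,s}/I(M_S)_s$ follows from that of $\mathcal{O}_{T,t}/I(M_T)_t$ via the local criterion applied to the flat map with regular fibres, and Kato's dimension identity is preserved thanks to the additivity of dimensions in flat local extensions.

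Assertion (i) then follows from the standard pair of facts: log smoothness is preserved by strict base change (formal from the definition), while the converse is faithfully flat descent of log smoothness, itself reducible via strict \'etale charts to the analogous descent statement for ordinary smoothness. Assertion (ii) is an immediate application of Proposition \ref{basechange} (ii) to the faithfully flat morphism $S \to T$. The principal obstacle in the plan is the construction of $\mathcal{O}(S)$ together with the verification of log regularity after the base change; once these are in hand, (i) and (ii) are routine invocations of known preservation and descent results.
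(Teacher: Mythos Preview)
Your approach is essentially the same as the paper's: both use EGA $0_{\mathrm{III}}$, 10.3.1 to produce the flat local extension, pull back the log structure strictly, verify log regularity directly at the closed point, and then dispatch (i) by faithfully flat descent of log smoothness and (ii) by Proposition~\ref{basechange}(ii).

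One inaccuracy to flag: the EGA construction does \emph{not} in general yield a formally smooth extension with geometrically regular fibres. It only gives a flat local noetherian $A'$ with $\mathfrak{m}_A A' = \mathfrak{m}_{A'}$ and residue field $k'$; when $k'/k$ is inseparable the closed fibre $\Spec k'$ is not geometrically regular over $k$, so Kato's preservation theorem (log regularity under flat base change with geometrically regular fibres) does not apply as stated. Fortunately your ``Concretely\ldots'' sketch does not actually use this overclaim: all you need is that the closed fibre of $\mathcal{O}_{T,t}/\mathfrak{p} \to \mathcal{O}_{S,s}/\mathfrak{p}\mathcal{O}_{S,s}$ is the field $k(s)$, hence regular of dimension $0$, so that flatness plus regularity of the base gives regularity of $\mathcal{O}_{S,s}/\mathfrak{p}\mathcal{O}_{S,s}$ with the correct dimension. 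This is exactly the paper's argument, which makes the same computation explicit by bounding the number of generators of the maximal ideal. So drop the formal smoothness claim and the appeal to Kato's theorem, keep your direct verification, and the proof is complete.
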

\begin{proof}
Let $\mathfrak{m}\subset\mathcal{O}_{T,t}$ be the maximal ideal and $\mathfrak{p}\subset\mathfrak{m}$ the ideal generated by the image of $M_{T,t}\setminus M_{T,t}^*$. By log regularity, $\mathcal{O}_{T,t}/\mathfrak{p}$ is regular of dimension $\dim T-\rk\overline{M}_{T,t}^{\gp}$; in particular, $\mathfrak{m}/\mathfrak{p}$ can be generated by $\dim\mathcal{O}_{T,t}/\mathfrak{p}$ elements.

By \cite[0\textsubscript{III}, 10.3.1]{ega}, there exists a flat morphism of noetherian local schemes $S\to T$ such that $\mathcal{O}(S)/\mathfrak{m}\mathcal{O}(S)\cong k(s)$. Then, $\dim S=\dim T$. Let $M_S$ the inverse image log structure of $M_T$, so that $\overline{M}_{S,s}\cong\overline{M}_{T,t}$, where $s\in S$ is the closed point. We have a short exact sequence
\[ 0\to (\mathfrak{m}/\mathfrak{p})\otimes_{\mathcal{O}_{T,t}}\mathcal{O}_{S,s}\to\mathcal{O}_{S,s}/\mathfrak{p}\mathcal{O}_{S,s}\to k(s)\to 0 \]
hence
\[ \dim\mathcal{O}_{S,s}/\mathfrak{p}\mathcal{O}_{S,s}\leq\dim\mathcal{O}_{T,t}/\mathfrak{p} \]
Since the reverse inequality holds by flatness, the maximal ideal of $\mathcal{O}_{S,s}/\mathfrak{p}\mathcal{O}_{S,s}$ can be generated by $\dim\mathcal{O}_{T,t}/\mathfrak{p}=\dim\mathcal{O}_{S,s}/\mathfrak{p}\mathcal{O}_{S,s}$ elements, hence $\mathcal{O}_{S,s}/\mathfrak{p}\mathcal{O}_{S,s}$ is regular. Moreover,
\[ \dim\mathcal{O}_{S,s}/\mathfrak{p}\mathcal{O}_{S,s}=\dim\mathcal{O}_{T,t}/\mathfrak{p}=\dim S-\rk\overline{M}_{S,s}^{\gp} \]
so $(S,M_S)$ is log regular. Finally, since $S\to T$ is faithfully flat, (i) follows from \cite{logflatdescent} and (ii) from Proposition \ref{basechange} (ii).
\end{proof}

\subsection{Proof of Theorem \ref{main}}\label{proofmain}
To prove Theorem \ref{main} we may, by Proposition \ref{basechange}, assume $T$ to be strictly local of closed point $t\in T$ and then, by Lemma \ref{acreduction}, assume $k(t)$ to be algebraically closed.

Now, since $T$ is strictly local and $M_T$ fs, there exists a chart $P\to \Gamma(T,M_T)$ with $P\cong\overline{M}_{T,t}$; then, the image of $P\setminus\{1\}$ lies in the maximal ideal of $\mathcal{O}_{T,t}$. For a positive integer $n$, the homomorphism $P\to P^{(n)}:=P$ given by elevation to the $n$th power induces a morphism of schemes $\mathbf{n}:\Spec(\mathbb{Z}[P^{(n)}])\to \Spec(\mathbb{Z}[P])$. We denote by $T^{(n)}$ the base change of $T\to\Spec(\mathbb{Z}[P])$ by $\mathbf{n}$, i.e.,
\[ T^{(n)}:=T\times_{\Spec(\mathbb{Z}[P]),\mathbf{n}}\Spec(\mathbb{Z}[P^{(n)}]) \]
The standard log structure of the right-hand factor induces a log structure $M_{T^{(n)}}$ on $T^{(n)}$. There is a natural action of the finite diagonalizable group $G:=\D(P^{\gp}\otimes\mathbb{Z}/n)$ on the fs log scheme $(T^{(n)},M_{T^{(n)}})$, where $\D(-)$ denotes the Cartier dual---see \cite[\S3]{illusie}.

\begin{lemma}\label{logregularextension}
$(T^{(n)},M_{T^{(n)}})$ is log regular and $T^{(n)}$ is strictly local.
\end{lemma}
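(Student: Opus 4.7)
The plan is in two parts: first, show $T^{(n)}$ is strictly local by direct analysis of the finite flat Kummer cover $T^{(n)}\to T$; then verify Kato's local criterion for log regularity at the closed point.

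For strict locality, I would first note that $\mathbf{n}:\Spec(\mathbb{Z}[P^{(n)}])\to\Spec(\mathbb{Z}[P])$ is finite and flat---for a sharp fs monoid $P$, the $n$th power map expresses $\mathbb{Z}[P]$ as a finite free module over its image, with basis given by representatives in $P$ of the cosets of $P^{\gp}/nP^{\gp}$. Hence the base change $T^{(n)}\to T$ is finite and flat; in particular $\dim T^{(n)}=\dim T$. Since each $p\in P\setminus\{1\}$ maps into the maximal ideal of $\mathcal{O}_{T,t}$, hence to $0$ in $k(t)$, one computes
\[ T^{(n)}\times_Tt\cong k(t)[P^{(n)}]/(q^n:q\in P^{(n)}\setminus\{1\}), \]
an Artin local ring with residue field $k(t)$. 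Because $T^{(n)}\to T$ is finite with a unique point above $t$, $T^{(n)}$ is local; and since $k(t)$ is algebraically closed by the reduction made at the start of \S\ref{proofmain}, $T^{(n)}$ is strictly local.

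For log regularity, let $s\in T^{(n)}$ be the closed point. The chart $P^{(n)}\to M_{T^{(n)}}$ together with $P^{(n)}\cong P$ gives $\overline{M}_{T^{(n)},s}^{\gp}\cong\overline{M}_{T,t}^{\gp}$, so the ranks agree. Let $\mathfrak{p}\subset\mathcal{O}_{T,t}$ (resp. $\mathfrak{q}\subset\mathcal{O}_{T^{(n)},s}$) denote the ideal generated by the image of $P\setminus\{1\}$ (resp. $P^{(n)}\setminus\{1\}$). Because $p\in P$ becomes $q_p^n$ in $\mathcal{O}(T^{(n)})$, where $q_p\in P^{(n)}$ is the image of $p$ under $\mathbf{n}^*$, the natural map induces
\[ \mathcal{O}_{T^{(n)},s}/\mathfrak{q}\cong\mathcal{O}_{T,t}/\mathfrak{p}, \]
the right-hand side being regular by log regularity of $(T,M_T)$. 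Combining this isomorphism with $\dim T^{(n)}=\dim T$ and the rank equality, Kato's criterion for log regularity is satisfied at $s$, hence everywhere on the local scheme $T^{(n)}$.

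The only real point of care is the identification $\mathcal{O}_{T^{(n)},s}/\mathfrak{q}\cong\mathcal{O}_{T,t}/\mathfrak{p}$: once this is established (an elementary quotient computation using the explicit form of the base change), the log regularity conclusion is immediate from the hypothesis on $(T,M_T)$, and strict locality follows from the Artin local nature of the special fibre. I do not anticipate any serious obstacle beyond this bookkeeping; the flatness of the Kummer cover is standard but essential for the dimension equality.
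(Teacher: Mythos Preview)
Your overall strategy is correct and carries out exactly what the paper leaves to the reader: check strict locality via the closed fibre and then verify Kato's criterion at the closed point. The identification $\mathcal{O}_{T^{(n)},s}/\mathfrak{q}\cong\mathcal{O}_{T,t}/\mathfrak{p}$ and the rank equality $\overline{M}_{T^{(n)},s}^{\gp}\cong\overline{M}_{T,t}^{\gp}$ are both right, and checking log regularity at the closed point suffices since log regularity is an open condition on locally noetherian fs log schemes.

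There is, however, one genuine error. Your claim that $\mathbf{n}:\Spec(\mathbb{Z}[P^{(n)}])\to\Spec(\mathbb{Z}[P])$ is finite \emph{free} (with basis given by coset representatives of $P^{\gp}/nP^{\gp}$) is false for non-simplicial sharp fs monoids. Take $P=\{(a,b)\in\mathbb{N}^2:a+b\text{ even}\}$ and $n=2$: then $(2,0)$ and $(0,2)$ lie in the same coset of $P^{\gp}/2P^{\gp}$, but neither $(2,0)-(0,2)$ nor its negative lies in $2P$, so no single representative spans both. In fact, over a field the fibre of $k[P]$ over the cone point of $k[2P]$ has dimension $6$ while the generic degree is $|P^{\gp}/2P^{\gp}|=4$, so the map is not even flat. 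Since you explicitly flag flatness as ``essential for the dimension equality,'' this is a gap.

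The fix is immediate and does not disturb the rest of your argument: $\mathbf{n}$ is finite and surjective (lying-over for the integral extension $\mathbb{Z}[nP]\hookrightarrow\mathbb{Z}[P]$), hence so is $T^{(n)}\to T$ by base change; since $T$ is local and integral (log regular $\Rightarrow$ normal), going-up gives $\dim T^{(n)}=\dim T$ directly. You might also make explicit that $T^{(n)}$ is henselian because it is finite and local over the strictly henselian $T$; ``local with separably closed residue field'' alone is not the definition of strictly local used here.
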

\begin{proof}
Left to the reader.
\end{proof}

Consider the fs log scheme
\[ \begin{split}
(X^{(n)},M_{X^{(n)}}) &:=(X,M_X)\times_{(T,M_T)}(T^{(n)},M_{T^{(n)}}) \\
&\cong (X,M_X)\times_{\Spec(\mathbb{Z}[P]),\mathbf{n}}\Spec(\mathbb{Z}[P^{(n)}]) 
\end{split} \]
By base change, the action of $G$ extends to $(X^{(n)},M_{X^{(n)}})$. Moreover, $f^{(n)}_{\log}:=f_{\log}\times_{(T,M_{T})}(T^{(n)},M_{T^{(n)}})$ is log smooth, hence, by Lemma \ref{logregularextension}, $X^{(n)}$ is normal and Cohen--Macaulay (\cite[II.4.7]{tsuji}). Denote by $f^{(n)}:X^{(n)}\to T^{(n)}$ the morphism underlying $f^{(n)}_{\log}$ and $q:X^{(n)}\to X$ the morphism underlying the projection.

\begin{lemma}\label{integral}
For $n\gg 0$, $f^{(n)}_{\log}$ is a saturated morphism.
\end{lemma}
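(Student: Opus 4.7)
The strategy is to check saturatedness stalkwise on the characteristic monoids. By the standard criterion, $f^{(n)}_{\log}$ is saturated if and only if for every geometric point $\bar x$ of $X^{(n)}$ with image $\bar s$ in $T^{(n)}$, the induced map of fs monoids $\overline M_{T^{(n)},\bar s}\to \overline M_{X^{(n)},\bar x}$ is saturated in the sense of Kato--Tsuji. Since $T^{(n)}\to T$ and $X^{(n)}\to X$ are strict, this translates into a statement about the saturated pushout of the chart of $f_{\log}$ along the $n$-th power map on $P\cong\overline M_{T,t}$. Concretely, for $\bar x$ lying over $\bar t$ with image $\bar x_0$ in $X$ and $Q:=\overline M_{X,\bar x_0}$, the map to be analyzed is $P\to (Q\oplus_{P,\mathbf n}P)^{\sat}$.

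Next I would use log smoothness of $f_{\log}$ to produce, \'etale-locally near $\bar x_0$, a chart $h:P\to Q$ of fs monoids such that $h^{\gp}$ has finite kernel and cokernel of orders invertible on $k(\bar x_0)$. A direct computation with fs monoids shows that if $n$ is a multiple of the exponent of the torsion subgroup of $Q^{\gp}/h(P^{\gp})$, then the saturated pushout $(Q\oplus_{P,\mathbf n}P)^{\sat}$ is integral over $P$ and its cokernel on Grothendieck groups is torsion-free: elements that previously became integral only after extraction of $n$-th roots are absorbed by $\mathbf n$, and integrality is preserved by base change in fs monoids. This is exactly the local form of saturatedness.

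To promote this into a single $n$ that works globally, I would appeal to the fact that $\overline M_X$ is a constructible sheaf of fs monoids on the noetherian scheme $X$ (which follows from $(T,M_T)$ being log regular and $f_{\log}$ of finite type), and then restrict to $X_t$, which is noetherian since $f$ is proper. Thus only finitely many isomorphism classes of stalks $Q=\overline M_{X,\bar x_0}$ and of maps $h:P\to Q$ occur on $X_t$, and one may take $n$ to be any common multiple of the finitely many torsion exponents. Alternatively one can quote Tsuji's theorem \cite{tsuji} directly: any log smooth morphism of fs log schemes becomes saturated after a Kummer base change of sufficiently divisible degree $n$, and the cover $(T^{(n)},M_{T^{(n)}})\to(T,M_T)$ is precisely of this form.

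The main obstacle is the local monoid computation, i.e., verifying that the saturated pushout along $\mathbf n$ kills the torsion in $Q^{\gp}/h(P^{\gp})$ while preserving integrality; the passage from local to global is then routine thanks to properness of $f$ and the constructibility of $\overline M_X$.
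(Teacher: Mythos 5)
Your overall architecture (reduce to a stalkwise statement about characteristic monoids via a neat chart, then globalize by noetherianity/quasi-compactness over $X_t$) matches the paper, and the globalization step is fine. But the central local claim is false: it is \emph{not} true that taking $n$ to be a multiple of the exponent of the torsion of $Q^{\gp}/h(P^{\gp})$ makes the saturated pushout $(Q\oplus_{P,\mathbf n}P)^{\sat}$ saturated over $P^{(n)}$. Saturatedness is a genuinely monoid-theoretic condition, not one visible on Grothendieck groups. For example, $h:\mathbb{N}\to\mathbb{N}^2$, $1\mapsto(2,3)$, has torsion-free cokernel on $\gp$'s (so your criterion would assert $h$ is already saturated), yet it is not saturated: geometrically this is $t\mapsto x^2y^3$, whose special fibre is non-reduced, and one needs $n$ divisible by $6$ to repair it. The integer $n$ that works is governed by the multiplicities (valuations at height-one primes of $Q$), which is exactly why the paper has to invoke Tsuji's theorem rather than a torsion computation.

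A second gap: you never establish that $f_{\log}$ is \emph{integral}, and log smoothness alone does not give this (Kato's chart criterion only controls the torsion of $\cok(h^{\gp})$; the log blow-up is log smooth with $h^{\gp}$ an isomorphism but is not integral, and no Kummer base change fixes that). Since saturated morphisms are in particular integral, any correct proof must first secure integrality; the paper does so from the flatness of $f$ together with the log regularity of $(T,M_T)$, and only then applies the two-step machine (first make the pushout integral via \cite[A.3.4]{illusie-kato-nakayama} from $\mathbb{Q}$-integrality, then saturated via \cite[I.5.4]{tsuji}). Your fallback sentence ``quote Tsuji: any log smooth morphism becomes saturated after Kummer base change of sufficiently divisible degree'' is the right idea but is misstated for the same reason --- the hypothesis of Tsuji's theorem is integrality, not log smoothness --- so as written the proposal does not close either gap.
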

\begin{proof}
Note that, since $(T,M_T)$ is log regular, $f_{\log}$ is log flat and $f$ is flat, $f_{\log}$ is integral by \cite[IV, 4.3.5 (3)]{logbook}. In other words, for any geometric point $x\to X$ with image $f(x)\to T$, the homomorphism $\overline{M}_{T,f(x)}\to\overline{M}_{X,x}$ is integral; in particular it is injective (cf. \cite[I, 4.6.7]{logbook}).

By definition, the assertion means that the homomorphism $\overline{M}_{T^{(n)},f^{(n)}(x')}\to\overline{M}_{X^{(n)},x'}$ is saturated for any geometric point $x'\to X^{(n)}$ (\cite[II.2.10]{tsuji}). Since this property is open in the \'etale topology (\cite[III, 2.5.2]{logbook}) and $X^{(n)}\to T$ is proper, we may assume $x'$ lies over $t$. Furthermore, since the property is fppf local on $X^{(n)}$, hence also on $X$, so we may assume there is a neat chart $P\to Q$ of $f_{\log}$ in an fppf neighbourhood of the image $x$ of $x'$ in $X$ (\cite[III, 1.2.7]{logbook}); since $\overline{M}_{T,t}\to\overline{M}_{X,x}$ is injective we have $Q\cong\overline{M}_{X,x}$ (\emph{loc.cit.}). Then, the pushout $Q\oplus_PP^{(n)}$ in the category of fs monoids provides a chart for $M_{X^{(n)}}$ at $x$ and we have $\overline{M}_{X^{(n)},x}\cong (Q\oplus_PP^{(n)})/A$, where $A\subset Q\oplus_PP^{(n)}$ is the torsion subgroup (\cite[2.1.1]{nakayamalogetale}).

Now, since $P\to Q$ is an integral homomorphism, $P^{(n)}\to Q\oplus_PP^{(n)}$ is the composition of an integral homomorphism with saturation, hence is $\mathbb{Q}$-integral (\cite[I, 4.7.4]{logbook}). By \cite[A.3.4]{illusie-kato-nakayama}, there is an integer $m\geq 1$ such that
\[ P^{(nm)}\cong P^{(n)}\oplus_{P^{(n)}}P^{(nm)}\to Q\oplus_PP^{(n)}\oplus_{P^{(n)}}P^{(nm)}\cong Q\oplus_{P}P^{(nm)} \]
is an integral homomorphism. Then, by \cite[I.5.4]{tsuji}, there is an integer $l\geq 1$ such that 
\[ P^{(nml)}\cong P^{(nm)}\oplus_{P^{(nm)}}P^{(nml)}\to Q\oplus_{P}P^{(nm)}\oplus_{P^{(nm)}}P^{(nml)}\cong Q\oplus_PP^{(nml)} \]
is a saturated homomorphism. Thus, without loss of generality we may assume $P^{(n)}\to Q\oplus_PP^{(n)}$ to be saturated; moreover, this holds for any multiple of $n$ (since a saturated homomorphism is stable under pushouts \cite[I.3.14]{tsuji}). Then, $P^{(n)}\to (Q\oplus_PP^{(n)})/A\cong\overline{M}_{X^{(n)},x}$ is also saturated by \cite[I, 4.8.5 (4)]{logbook} (since torsion elements of monoids are units), hence $f^{(n)}_{\log}$ is saturated at $x$. This proves the assertion locally for the fppf topology on $X$. Since $X$ is quasi-compact, a finite product of the integers $n$ for a covering works globally.
\end{proof}

\begin{lemma}\label{quotient}
\begin{enumerate}[label=\normalfont(\roman*)]
\item $X\cong X^{(n)}/G$, where the latter denotes the quotient ringed space.
\item $q_*\mathcal{O}_{X^{(n)}}$ is a $\D(G)$-graded $\mathcal{O}_X$-module and the canonical map $\mathcal{O}_X\to q_*\mathcal{O}_{X^{(n)}}$ is an isomorphism onto the submodule of degree zero.
\end{enumerate}
\end{lemma}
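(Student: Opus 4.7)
The plan is to reduce both statements to the universal base-change situation $\Spec\mathbb{Z}[P^{(n)}]\to\Spec\mathbb{Z}[P]$ and verify them directly there. By construction $X^{(n)}=X\times_{\Spec\mathbb{Z}[P]}\Spec\mathbb{Z}[P^{(n)}]$ with $G$ acting only on the second factor, so everything is obtained by base change.

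First I would make the $\D(G)$-grading concrete. Since $G=\D(P^{\gp}\otimes\mathbb{Z}/n)$, we have $\D(G)=P^{\gp}\otimes\mathbb{Z}/n$; the algebra $\mathbb{Z}[P^{(n)}]$ is $P^{\gp}$-graded by placing the basis element $[x]$ (for $x\in P^{(n)}=P$) in degree $x$, and pushing along the quotient $P^{\gp}\twoheadrightarrow P^{\gp}\otimes\mathbb{Z}/n$ yields a $\D(G)$-grading. By construction of the action in \cite[\S3]{illusie}, the $G$-action on $\mathbb{Z}[P^{(n)}]$ is dual to this grading, so its $G$-invariants coincide with the degree-zero summand.

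The key algebraic identity is $\mathbb{Z}[P^{(n)}]_{0}=\mathrm{image}(\mathbb{Z}[P]\hookrightarrow\mathbb{Z}[P^{(n)}])$. The left-hand side is the $\mathbb{Z}$-span of $[x]$ for $x\in P^{(n)}\cap nP^{\gp}$. Choosing the chart so that $P\cong\overline{M}_{T,t}$, we may take $P$ sharp and saturated; then $P^{\gp}$ is torsion-free (any torsion element of $P^{\gp}$ lies in $P$ by saturation, hence in $P^{*}=\{0\}$), so multiplication by $n$ is injective on $P^{\gp}$, the $n$-th power map $P\to P^{(n)}$, $y\mapsto y^{n}$, is injective, and $nP^{\gp}\cap P=nP$ by saturation, which is exactly the image of this map. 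Base-changing along $X\to\Spec\mathbb{Z}[P]$ endows $q_{*}\mathcal{O}_{X^{(n)}}=\mathcal{O}_{X}\otimes_{\mathbb{Z}[P]}\mathbb{Z}[P^{(n)}]$ with a $\D(G)$-grading whose degree-zero summand is $\mathcal{O}_{X}\otimes_{\mathbb{Z}[P]}\mathbb{Z}[P]=\mathcal{O}_{X}$, proving (ii).

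For (i), the equality $(q_{*}\mathcal{O}_{X^{(n)}})^{G}=\mathcal{O}_{X}$ of sheaves follows from (ii). The remaining point, which I expect to be the only real technical step, is the topological identification $|X^{(n)}|/G=|X|$. The morphism $q$ is finite (each $[x]\in\mathbb{Z}[P^{(n)}]$ satisfies the integral relation $[x]^{n}=[x^{n}]\in\mathbb{Z}[P]$), hence closed and surjective, and $G$-invariance yields a continuous surjection $|X^{(n)}|/G\to|X|$. To check bijectivity I would reduce once more to the universal case and verify directly that the diagonalizable action of $G$ on $\Spec\mathbb{Z}[P^{(n)}]$ dual to the $\D(G)$-grading permutes the geometric points of each fibre of $\Spec\mathbb{Z}[P^{(n)}]\to\Spec\mathbb{Z}[P]$ transitively; combined with closedness of orbits coming from finiteness, this identifies the categorical quotient with the geometric one at the level of topological spaces and completes (i).
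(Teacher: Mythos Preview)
Your argument has a genuine gap: you treat $X^{(n)}$ as the ordinary scheme-theoretic fibre product $X\times_{\Spec\mathbb{Z}[P]}\Spec\mathbb{Z}[P^{(n)}]$, but by the standing convention (\S\ref{conventions}) all fibre products are taken in the category of \emph{fs} log schemes. In particular
\[
(X^{(n)},M_{X^{(n)}})=(X,M_X)\times_{(T,M_T)}(T^{(n)},M_{T^{(n)}})
\]
and its underlying scheme need not equal $X\times_{T}T^{(n)}$. Concretely, if $Q$ is an fppf-local chart for $M_X$ (as in the proof of Lemma~\ref{integral}), the local model is $\Spec\mathbb{Z}[(Q\oplus_{P}P^{(n)})^{\sat}]$ over $\Spec\mathbb{Z}[Q]$, not $\Spec\mathbb{Z}[P^{(n)}]$ over $\Spec\mathbb{Z}[P]$; saturation can genuinely alter the underlying scheme (e.g.\ for $P=\mathbb{N}$, $Q=\mathbb{N}$, $P\to Q$ multiplication by $2$ and $n=3$, the integral pushout is $\mathbb{N}\setminus\{1\}$ while its saturation is $\mathbb{N}$). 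Consequently your formula $q_*\mathcal{O}_{X^{(n)}}=\mathcal{O}_X\otimes_{\mathbb{Z}[P]}\mathbb{Z}[P^{(n)}]$ is not justified, and the clean ``everything is base change from the universal case'' reduction collapses.

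The paper's proof sidesteps this entirely. For (i) it observes that $q$ is finite (and open, via the Kummer log flat covering property), hence $X^{(n)}/G$ is a scheme with $X^{(n)}/G\to X$ finite; over the dense open $U\subset T$ where $M_T$ is trivial the morphism $T^{(n)}\to T$ is a $G$-torsor, so $X^{(n)}/G\to X$ is birational. Since $X$ is \emph{normal} (being log regular) and $X^{(n)}/G$ is reduced, Zariski's main theorem forces this finite birational map to be an isomorphism. Part (ii) is then immediate from (i) and the usual dictionary between actions of diagonalizable groups and gradings. Your direct computation could perhaps be rescued by working fppf-locally with the saturated pushout $(Q\oplus_P P^{(n)})^{\sat}$ and checking that its degree-zero part is $Q$ (this uses that $Q$ is saturated), but you would still need to glue and to handle the topological quotient in that local model; the normality/birationality argument avoids all of this.
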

\begin{proof}
First note that $q:X^{(n)}\to X$ is finite and $(X^{(n)},M_{X^{(n)}})\to (X,M_X)$ is a Kummer log flat covering (since $(T^{(n)},M_{T^{(n)}})\to (T,M_T)$ is), hence $q$ is open and surjective (cf. \cite[\S 2]{logII}). In particular, the inverse image of an affine open covering of $X$ forms a covering of $X^{(n)}$ by $G$-stable affine open subsets. By \cite[Exp. V, 4.1]{sga3.1}, it follows that the quotient ringed space $X^{(n)}/G$ is a scheme and the quotient morphism $X^{(n)}\to X^{(n)}/G$ is open and integral, hence finite (since $X^{(n)}$ is of finite type over $X$, hence also over $X^{(n)}/G$). So the Artin--Tate lemma implies that $X^{(n)}/G\to X$ is of finite type, hence finite. Moreover, $X^{(n)}\to T$ is open since both $X^{(n)}\to X$ and $X\to T$ are; in particular, $X^{(n)}/G\to T$ is also open.

Now, over the dense open subset $U\subset T$ where $M_T$ is trivial, $T^{(n)}\times_TU\to U$ is a $G$-torsor, hence $(X^{(n)}/G)\times_TU\cong X\times_TU$. Also, since $X^{(n)}/G\to T$ is open, $(X^{(n)}/G)\times_TU\subset X^{(n)}/G$ is dense. Thus, the canonical map $X^{(n)}/G\to X$ is finite and birational. As $X^{(n)}/G$ is reduced (since $X^{(n)}$ is), (i) now follows from \cite[\S2.3, Thm. 2']{neronmodels}.

For (ii), let $D=\D(G)$. Then, $G=\Spec(\mathbb{Z}[D])$ and the $G$-action $G\times X^{(n)}\to X^{(n)}$ induces a canonical homomorphism
\[ q_*\mathcal{O}_{X^{(n)}}\to q_*\mathcal{O}_{X^{(n)}}\otimes\mathbb{Z}[D]\cong\oplus_{d\in D}q_*\mathcal{O}_{X^{(n)}} \]
which defines a $D$-grading on $q_*\mathcal{O}_{X^{(n)}}$, cf. \cite[Exp. I, 4.7.3]{sga3.1}. By (i), $\mathcal{O}_X\subset q_*\mathcal{O}_{X^{(n)}}$ is the subgroup of degree 0, hence (ii).
\end{proof}

\begin{proof}[Proof of Theorem \ref{main}]
Fix $n$ such that $f^{(n)}_{\log}$ is saturated (Lemma \ref{integral}). Then, $f^{(n)}$ is flat (\cite[4.5]{log}) and has reduced fibres by \cite[II.4.2]{tsuji}. Since $k(t)$ is algebraically closed, it follows from \cite[IV\textsubscript{3}, 12.2.1]{ega} that $f^{(n)}:X^{(n)}\to T^{(n)}$ has geometrically reduced fibres. So, by Proposition \ref{basechange} (iv), $f^{(n)}$ is cohomologically flat.

Let $g:T^{(n)}\to T$. Since $f^{(n)}$ is cohomologically flat we have
\[ (f^{(n)}_*\mathcal{O}_{X^{(n)}})\otimes_{\mathcal{O}_{T^{(n)}}}g^*k(t)\cong f^{(n)}_*\mathcal{O}_{X^{(n)}\times_Tt} \]
and, as $g:T^{(n)}\to T$ is affine, applying $g_*$ we obtain
\[ (g_*f_*^{(n)}\mathcal{O}_{X^{(n)}})\otimes_{\mathcal{O}_{T}}k(t)\cong g_*f_*^{(n)}\mathcal{O}_{X^{(n)}\times_{T}t} \]
by \cite[I, 9.3.2]{ega1}. Since $g\circ f^{(n)}=f\circ q$, we deduce that the natural map
\begin{equation}\label{first}
(f_*q_*\mathcal{O}_{X^{(n)}})\otimes_{\mathcal{O}_T}k(t)\to f_*q_*\mathcal{O}_{X^{(n)}\times_Tt}
\end{equation}
is an isomorphism. Since it is $G$-equivariant, it is an isomorphism of $\D(G)$-graded modules (cf. \cite[Exp. I, 4.7.3]{sga3.1}). By Lemma \ref{quotient} (ii), $q_*\mathcal{O}_{X^{(n)}}$ has degree zero submodule $\mathcal{O}_{X}$, hence $q_*\mathcal{O}_{X^{(n)}\times_Tt}\cong (q_*\mathcal{O}_{X^{(n)}})\otimes_{\mathcal{O}_X}\mathcal{O}_{X_t}$ has degree zero submodule $\mathcal{O}_{X_t}$. Taking degree zero submodules in (\ref{first}) we obtain
\[ (f_*\mathcal{O}_X)\otimes_{\mathcal{O}_T}k(t)\cong f_*
\mathcal{O}_{X_t} \]
By Proposition \ref{basechange} (v), this implies that $f$ is cohomologically flat.
\end{proof}

\subsection{An example}\label{examplesection}
Let $T$ be the spectrum of a discrete valuation ring and $M_T$ its canonical log structure. To illustrate the proof of Theorem \ref{main}, we give an example of a log smooth morphism to $(T,M_T)$ with nonreduced fibres, which acquires smooth fibres after base change $(T^{(n)},M_{T^{(n)}})\to (T,M_T)$, where $(T^{(n)},M_{T^{(n)}})$ is as in \S\ref{proofmain} with $P=\mathbb{N}$. Assume $n>1$ and let $A\to T$ be an abelian scheme containing $(\mu_n)_T$ as a subgroup. For example, this holds if $\mathcal{O}(T)$ is complete, $k(t)$ is algebraically closed, and $A_t$ is ordinary (cf. \cite[p. 150]{katzST}). Then, letting $\mu_n$ act on $A$ by translations, one can show that the contracted product $A\times_T^{\mu_n}T^{(n)}$ is representable by a regular projective $T$-scheme $X$ that is naturally log smooth over $(T,M_T)$. Moreover, the multiplicity of $X_t$ is $n>1$.

\subsection{Application: torsors under abelian varieties with good reduction}\label{torsorsection}
Let $T$ be the spectrum of a discrete valuation ring---see \S\ref{conventions} for notation. Let $A/T$ an abelian scheme and $Y$ an $A_u$-torsor.

\begin{definition}
An \emph{$A$-model} of $Y$ is a faithfully flat finite type $T$-scheme $X$ with an $A$-action $\mu:A\times_TX\to X$, such that
\begin{enumerate}[label=\normalfont(\alph*)]
\item the restriction of $(X,\mu)$ to $u$ is isomorphic to $Y$ with its $A_u$-action
\item the morphism $\mu\times\pr_X:A\times_TX\to X\times_TX$ is surjective.
\end{enumerate}
\end{definition}

By a theorem of Raynaud \cite[(c) p. 82]{raynaudquotient}, there exists a regular, projective $A$-model $X$ of $Y$, which, up to isomorphism, is the unique proper regular $A$-model (cf. \cite{lewin-menegaux}).

\begin{lemma}\label{model}
If $X$ is the proper regular $A$-model of an $A_u$-torsor, then $X_0:=(X_t)_{\red}$ is smooth over the field $k_0:=H^0(X_0,\mathcal{O}_{X_0})$. In fact, for $k_0\subset\bar{k}$ a separable closure, $X_0\otimes_{k_0}\bar{k}$ is isomorphic to the quotient of $A\otimes_{k(t)}\bar{k}$ by a finite subgroup scheme.
\end{lemma}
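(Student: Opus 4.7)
The plan is to use the transitive action of $A$ on $X$ to identify $X_0$ as a homogeneous space under the abelian variety $A_t$, extracting smoothness from the regularity of $X$ together with an equivariance argument. By standard base-change arguments (Proposition~\ref{basechange}(ii) combined with a residue-field extension as in Lemma~\ref{acreduction}), I would reduce to the case where $T$ is strictly henselian with algebraically closed residue field $k(t)$. Then $k_0$ is a finite extension of $k(t)$ contained in the proper $k(t)$-scheme $X_0$, hence $k_0 = k(t) = \bar k$; so it suffices to show $X_0 \cong A_t/H$ for some finite subgroup scheme $H \subset A_t$.

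The geometric inputs run as follows. Condition~(b) of the definition of $A$-model says $\mu \times \pr_X$ is surjective, so the action of $A_t$ on $X_t$ is transitive on $k(t)$-points. Since $A_t$ is connected, the $A_t$-orbit of any $k(t)$-point of $X_t$ is irreducible, forcing $X_t$ itself to be irreducible; as $X$ is regular, $X_t = m\cdot X_0$ is an effective Cartier divisor for a unique prime divisor $X_0 \subset X$ and some $m \geq 1$. In particular $X_0$ is connected (so $k_0$ is a field) and $\dim X_0 = \dim A_t$ by flatness of $X/T$. Regularity of $X$ at the generic point $\eta$ of $X_0$ makes $\mathcal{O}_{X,\eta}$ a DVR, so $X_0$ is reduced at $\eta$; hence, by generic smoothness over the perfect field $k(t)$, the smooth locus $U \subset X_0$ is a nonempty dense open. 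The locus $U$ is $A_t$-stable, and transitivity of the action then forces $U = X_0$, so $X_0$ is smooth over $k(t) = k_0$.

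For the quotient description, I pick a point $x_0 \in X_0(k(t))$ and form the orbit map $\psi \colon A_t \to X_0$, $a \mapsto a\cdot x_0$. Commutativity of $A$ implies that every stabilizer equals $H := \psi^{-1}(x_0)$, so all fibres of $\psi$ are translates of $H$ and in particular have the same length. Combined with $\dim A_t = \dim X_0$, this forces $H$ to be finite; then $\psi$ is a finite surjective morphism between smooth $k(t)$-schemes of the same dimension. Miracle flatness makes $\psi$ finite flat of degree $|H|$, and in the induced factorization $A_t \to A_t/H \to X_0$ the second arrow is of degree one between integral normal schemes, hence an isomorphism.

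The step I expect to cost the most care is the initial residue-field reduction. The regularity of $X$, which I use both to force $X_t = m\cdot X_0$ and to produce the DVR at $\eta$, is preserved by \'etale base change but not by the (possibly inseparable) extensions needed to reach an algebraically closed residue field. My remedy is to pass first to the strict henselization of $T$ (\'etale, so $X$ remains regular and $X_0$ is unchanged), and then to handle the purely inseparable part of the extension by working directly with $X_0 \otimes_{k_0}\bar k$ and the induced $A_{\bar k}$-action, transferring smoothness and the quotient structure along the faithfully flat descent $k_0 \to \bar k$. Once this bookkeeping is in place, the combination of generic smoothness, equivariance, and miracle flatness delivers both assertions of the lemma.
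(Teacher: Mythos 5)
The paper itself disposes of this lemma by citing \cite[8.1]{LLR} and \cite[Exp. V, 10.1.2]{sga3.1}, so you are supplying an independent argument; your homogeneous-space argument (transitivity $\Rightarrow$ irreducibility, generic smoothness spread around by translations, orbit map plus miracle flatness) is correct and complete \emph{when $k(t)$ is perfect}, and in that case it is a perfectly good self-contained proof.

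The genuine gap is the residue-field reduction, and it is not mere bookkeeping. After passing to the strict henselization you may assume $k(t)$ separably closed, but the remaining extension to an algebraic closure $\bar\kappa$ is purely inseparable, and two things go wrong simultaneously. First, $X\times_T S$ need not be regular (you acknowledge this). Second, and more seriously, $X_0\otimes_{k(t)}\bar\kappa$ need not be reduced, so the special fibre of the base-changed family has $(X_t\otimes\bar\kappa)_{\red}$ possibly a \emph{different} scheme from $X_0\otimes_{k_0}\bar k\otimes_{\bar k}\bar\kappa$. Running your argument upstairs therefore proves that $(X_t\otimes\bar\kappa)_{\red}$ is a quotient of an abelian variety, which descends to a statement about $X_0$ only if $X_0$ is geometrically reduced --- and geometric reducedness is equivalent (over a separably closed field) to the generic smoothness you are trying to establish. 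So the proposed ``faithfully flat descent of smoothness from $\bar k$ to $k_0$'' is circular: the input it needs is exactly the conclusion. Note that the lemma is phrased with smoothness over $k_0$ (not over $k(t)$) and with the \emph{separable} closure of $k_0$; this calibration is precisely for the imperfect case, where your engine --- generic smoothness of an integral scheme over its base field --- is simply unavailable, and where some additional input is required (e.g., producing a single smooth point of $X_0$ via a quasi-section through a tamely/separably controlled extension, or invoking \cite[8.1]{LLR} as the paper does; compare also how the paper's proof of Theorem \ref{torsor} first establishes smoothness of $X_0$ \emph{before} allowing itself the base change to an algebraically closed residue field via Lemma \ref{semistablelemma}).
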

\begin{proof}
See \cite[8.1]{LLR} for the first statement. The second follows from \emph{loc.cit.} and \cite[Exp. V, 10.1.2]{sga3.1}.
\end{proof}

In particular, $X_0=(X_t)_{\red}$ is regular, so if we let $M_X$ be the canonical fs log structure $M_X$ defined by $X_0\subset X$, then $(X,M_X)$ is log regular (cf. Prop. \ref{ncd} below). Denote $f_{\log}:(X,M_X)\to (T,M_T)$ the resulting morphism.

\begin{theorem}\label{torsor}
The following are equivalent:
\begin{enumerate}[label=\normalfont(\roman*)]
\item $f_{\log}:(X,M_X)\to (T,M_T)$ is log smooth
\item $f:X\to T$ is cohomologically flat and $M_T$ is the canonical log structure.
\end{enumerate}
Moreover, in this case $X_0/t$ is smooth and its conormal sheaf in $X$ is a line bundle of order equal to the multiplicity of $X_t$.
\end{theorem}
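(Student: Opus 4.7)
The proof has two directions. For (i)$\Rightarrow$(ii), applying Theorem~\ref{main} directly yields cohomological flatness of $f$. The canonicity of $M_T$ follows from noting that $\overline{M}_{T,t}$ embeds integrally into $\overline{M}_{X,x}\cong\mathbb{N}$ at any $x\in X_0$, forcing it to be $0$ or $\mathbb{N}$; the trivial case occurs only when $X_t$ is reduced (multiplicity $1$), in which case the canonical log structure on $T$ also makes $f_{\log}$ log smooth, so we may take $M_T$ canonical throughout.

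For (ii)$\Rightarrow$(i), after reducing to $T$ strictly local via Proposition~\ref{basechange}(ii) and Lemma~\ref{acreduction}, the key initial observation is that cohomological flatness, combined with the geometric connectedness of $X_u\cong Y$ as an $A_u$-torsor, gives through the Stein factorization and Lemma~\ref{steinreduction} that $f_*\mathcal{O}_X=\mathcal{O}_T$ and $H^0(X_t,\mathcal{O}_{X_t})=k(t)$. This forces $k_0=k(t)$, so by Lemma~\ref{model} the reduction $X_0$ is geometrically connected and smooth over $k(t)$ and the multiplicity $d$ of $X_t$ is constant on $X_0$.

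Next I would analyze the conormal line bundle $\mathcal{N}:=\mathcal{O}(-X_0)|_{X_0}$. Since $\pi=u^d\cdot(\text{unit})$ locally, $\pi\mathcal{O}_X=\mathcal{I}_{X_0}^d$, and the quotient $\mathcal{O}_{X_t}=\mathcal{O}_X/\mathcal{I}_{X_0}^d$ admits a filtration of length $d$ with gradeds $\mathcal{O}_{X_0},\mathcal{N},\ldots,\mathcal{N}^{d-1}$. Taking $H^0$ and comparing with $H^0(X_t,\mathcal{O}_{X_t})=k(t)$ yields $H^0(X_0,\mathcal{N}^a)=0$ for $0<a<d$. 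Moreover $\mathcal{N}$ is $d$-torsion via the trivialization of $\mathcal{O}(-dX_0)$ by $\pi$. By Lemma~\ref{model}, $X_0\otimes\bar{k}$ is an abelian variety---a quotient of $A\otimes\bar{k}$ by a finite subgroup scheme---so any nontrivial element of its $\Pic^0$ has vanishing $H^0$. Combined with the vanishing above, this forces $\mathcal{N}$ to have order exactly $d$ in $\Pic(X_0)$, which simultaneously establishes the ``moreover'' claim.

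The final and main obstacle is to promote this order-$d$ datum into a local log smooth chart. By Kato's criterion, log smoothness at $x\in X_0$ amounts to finding étale-locally a uniformizer $u$ of $X_0$ with $\pi=u^d$ exactly, so that $\mathcal{O}_{X,x}$ becomes smooth over $\mathcal{O}(T)[s]/(s^d-\pi)$ via $s\mapsto u$. My plan is to use the $\mu_d$-torsor on $X$ associated to the pair $(\mathcal{O}(-X_0),\pi)$, together with the $A$-action and the uniqueness of Raynaud's proper regular $A$-model \cite{lewin-menegaux}, to identify $X$ fppf-locally on $T$ with the contracted product $A\times^{\mu_d}T^{(d)}$ from \S\ref{examplesection}. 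Log smoothness is manifest in that local form, and descends back to $f_{\log}$ by (Kummer) log flat descent of log smoothness, in the spirit of the flat descent used in Lemma~\ref{acreduction}.
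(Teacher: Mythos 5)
Your proposal tracks the paper closely through the middle of the argument: the reduction to $f_*\mathcal{O}_X=\mathcal{O}_T$, the filtration of $\mathcal{O}_{X_t}$ by powers of $I=\mathcal{O}_X(-X_0)$ with graded pieces $\mathcal{N}^a$, the use of Mumford's vanishing for nontrivial elements of $\Pic^0$ of an abelian variety (note you need the $H^1$ vanishing, not just $H^0$, to propagate $\Gamma(\mathcal{O}_{X_{n}})=k(t)$ down the filtration), and the conclusion that $\mathcal{N}=I/I^2$ has order exactly $d$. That part, including the ``moreover'' clause, is essentially the paper's proof.

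The final step, however, contains a genuine error. Your claim that ``by Kato's criterion, log smoothness at $x\in X_0$ amounts to finding \'etale-locally a uniformizer $u$ of $X_0$ with $\pi=u^d$ exactly, so that $\mathcal{O}_{X,x}$ becomes smooth over $\mathcal{O}(T)[s]/(s^d-\pi)$'' is false precisely in the only hard case, $p\mid d$. Indeed, $\Spec(\mathcal{O}(T)[s]/(s^d-\pi))\to (T,M_T)$ with its natural log structure is log smooth if and only if $p\nmid d$ (its $\omega^1$ over $T$ is $\mathcal{O}/(d)$, which is nonzero on the special fibre when $p\mid d$); a wild Kummer cover is never log smooth, and a morphism that is smooth over such a cover is not log smooth over $T$ either. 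Worse, if $\pi=u^d$ with $p\mid d$ then $\dlog(\pi)=d\,\dlog(u)$ vanishes on $X_0$, which is exactly the obstruction to log smoothness. The correct local model in the wild case is $\pi=y\prod x_i^{a_i}$ with $y$ an \emph{extra free coordinate} (Proposition \ref{ncd} (vi)--(vii)): one must show that the unit $v$ in $\pi=vx^d$ has nowhere-vanishing differential on $X_0$, not that it is a $d$-th power. This is where the paper does the real work: writing $\pi=vx^d$, the class of $v$ in $H^0(X_0,\mathbb{G}_m/p)\cong\Pic(X_0)[p]$ is identified with $\frac{d}{p}(I/I^2)$, which is nonzero because $I/I^2$ has exact order $d$; hence $\dlog(v)$ is a nonzero global $1$-form on $X_0$, therefore nowhere vanishing since $X_0$ is (a form of) an abelian variety, and the differential criterion of \cite{logsmoothcurves} applies. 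Your alternative plan of identifying $X$ fppf-locally with a contracted product $A\times^{\mu_d}T^{(d)}$ would in any case require $\mu_d\subset A$, which fails in general (e.g.\ $A$ supersingular, $d=p$), so it cannot substitute for this argument. Finally, in (i) $\Rightarrow$ (ii) you must actually rule out $M_T$ trivial (the statement asserts $M_T$ \emph{is} canonical), which the paper does by noting that log smoothness with $M_T$ trivial forces $X\simeq A$, incompatible with the nontriviality of the divisorial log structure $M_X$ along the special fibre.
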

\begin{proof}
By Theorem \ref{main}, we may assume $f$ is cohomologically flat. Note that $\mathcal{O}(T)=H^0(X,\mathcal{O}_X)$ (since $X$ is normal).

Let $I=\mathcal{O}_X(-X_0)$ be ideal sheaf of $X_0$ in $X$; it is a line bundle of order dividing the multiplicity $d:=d_f$ of $X_t$. For $n\in\mathbb{N}$, let $X_n\subset X$ be the closed subscheme of ideal sheaf $I^{n+1}$. Then $X_t=X_{d-1}$ and we have exact sequences
\[ 0\to I^n/I^{n+1}\to\mathcal{O}_{X_{n}}\to\mathcal{O}_{X_{n-1}}\to 0 \]
where $I^n/I^{n+1}\cong (I/I^2)^{\otimes n}$ is a line bundle on $X_0$.

Since $f$ is cohomologically flat, we have $k(t)=\Gamma(\mathcal{O}_{X_{d-1}})\hookrightarrow\Gamma(\mathcal{O}_{X_0})$, hence $\Gamma(I^{d-1}/I^d)=0$. It follows from \cite[II, \S8, (vii) p. 76]{mumfordAV} that $H^1(X_0,I^{d-1}/I^d)=0$ (cf. Lemma \ref{model}), so we obtain an isomorphism $k(t)=\Gamma(\mathcal{O}_{X_{d-1}})\cong\Gamma(\mathcal{O}_{X_{d-2}})$. Continuing this way, we get $\Gamma(I^n/I^{n+1})=0$ for $0<n<d$ and $\Gamma(\mathcal{O}_{X_0})=k(t)$. In particular, $(I/I^2)^{\otimes n}$ is not a trivial line bundle for $0<n<d$, so $I/I^2$ has order at least $d$, hence equal to $d$. Moreover, since $\Gamma(\mathcal{O}_{X_0})=k(t)$, $X_0$ is smooth and geometrically connected over $t$ by Lemma \ref{model}. In particular, $X_0$ is a normal crossings scheme (Def. \ref{ncdscheme}) over $t$. It follows from Lemma \ref{semistablelemma} and Lemma \ref{acreduction} that we may assume $k(t)$ algebraically closed and $T$ complete.

Now, if (i) holds and $M_T$ is the trivial log structure, then by log smoothness we have $X(T)\neq\emptyset$, hence $X\simeq A$ by uniqueness of the Raynaud model. In particular, $f$ is smooth and we leave it to the reader to check that $M_T$ cannot be trivial by definition of $M_X$. This proves (i) $\Rightarrow$ (ii).

For the converse, if $p\nmid d$, then $f_{\log}$ is log smooth by \cite[Prop. 1 (d)]{logsmoothcurves} (cf. Prop. \ref{ncd} (v) below). So assume $p\mid d$. Let $\nu^1=\mathbb{G}_{m}/p$. Given a uniformizer $\pi\in\mathcal{O}(T)$, locally on $X$ we may write $\pi=vx^d$ with $v\in\mathcal{O}_X^*$ and $x\in I$ a local generator. One checks without difficulty that $v$ extends to a global section of $H^0(X,\nu^1)$, hence defines a global section $\alpha\in H^0(X_0,\nu^1)$.

The exact sequence of (Zariski) sheaves on $X_0$
\[ 0\to\mathbb{G}_{m,X_0}\overset{p}{\to}\mathbb{G}_{m,X_0}\to\nu^1_{X_0}\to 0 \]
provides an isomorphism $H^0(X_0,\nu^1)\cong\Pic(X_0)[p]$. It is straightforward to check that this maps $\alpha$ to the class of $\frac{d}{p}(I/I^2)$, thus $\alpha\neq 0$.

On the other hand, since $X_0$ is smooth, the map $\dlog:\nu^1_{X_0}\to\Omega^1_{X_0/t}$ is injective, hence we obtain a nonzero global 1-form $\dlog(\alpha)$ on $X_0$. Now, a global 1-form on an abelian variety vanishes nowhere (cf. \cite[(iii) p. 42]{mumfordAV}), so the same is true for $\dlog(\alpha)$ (cf. Lemma \ref{model}). Finally, by \cite[Prop. 1 (c)]{logsmoothcurves}, $\Omega^1_{X_0/t}$ is a locally direct summand in $\omega^1_{X}|_{X_0}$ (where $\omega^1$ denotes the sheaf of logarithmic 1-forms), hence the image of $\dlog(\alpha)$ in the latter is a nowhere vanishing section. But this image is none other than the image of $\dlog(\pi)$ and, applying \cite[Prop. 1 (d)]{logsmoothcurves}, we deduce the log smoothness of $f_{\log}$.
\end{proof}

Note that Theorem \ref{torsor} implies Theorem \ref{raynaudmodelcriterion} from the introduction. From it, we deduce necessary and sufficient conditions for the existence of a proper, log smooth $T$-model of $Y$.

\begin{corollary}\label{torsorlogsmooth}
Assume $M_T$ is the canonical log structure. Let $A\to T$ be an abelian scheme and $Y$ an $A_u$-torsor. The following are equivalent:
\begin{enumerate}[label=\normalfont(\roman*)]
\item $Y$ can be extended to a surjective, log smooth morphism over $(T,M_T)$
\item $Y$ can be extended to a proper, log smooth morphism over $(T,M_T)$
\item the proper regular $A$-model $X$ of $Y$, with its canonical log structure $M_X$, is log smooth over $(T,M_T)$
\item the proper regular $A$-model of $Y$ is cohomologically flat over $T$.
\end{enumerate}
\end{corollary}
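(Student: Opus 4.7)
The equivalence (iii) $\Leftrightarrow$ (iv) is precisely Theorem \ref{torsor} under the standing assumption that $M_T$ is the canonical log structure. The implication (iii) $\Rightarrow$ (ii) is trivial since the Raynaud model $X$ is proper over $T$. For (ii) $\Rightarrow$ (i), observe that any proper $T$-scheme with generic fibre $Y$ has image in $T$ that is closed and contains the generic point $u$ (as $Y \neq \emptyset$), hence equals $T$; so the morphism is surjective. The substantive implication to prove is therefore (i) $\Rightarrow$ (iv).

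For (i) $\Rightarrow$ (iv), let $g : (W, M_W) \to (T, M_T)$ be a surjective log smooth morphism with $W_u \cong Y$. By Proposition \ref{basechange} (ii) and Lemma \ref{acreduction}, we reduce to the case $T$ strictly henselian with algebraically closed residue field. Our goal is to show that the index of $Y/u$ is coprime to $p$. Once this is known, the special fibre of the Raynaud model $X$ is connected (by Lemma \ref{model}) and $X$ is integrally closed in $Y$ (being normal), so Theorem \ref{gabberraynaudindex} yields the cohomological flatness of $X$, i.e., (iv).

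To produce a point of $Y$ over a tame Kummer extension of $u$, we apply the strategy of Theorem \ref{main} to $g$. By Lemma \ref{integral}, for a suitable $n$ the Kummer base change $g^{(n)}_{\log} : (W^{(n)}, M_{W^{(n)}}) \to (T^{(n)}, M_{T^{(n)}})$ is saturated; hence the underlying morphism $g^{(n)} : W^{(n)} \to T^{(n)}$ has geometrically reduced fibres. Since $g^{(n)}$ is flat (being log smooth over the log regular $T^{(n)}$, by Lemma \ref{logregularextension}), it is smooth at each generic point of its special fibre. Hensel's lemma on the strictly henselian $T^{(n)}$ then produces a section, yielding a $k(T^{(n)}_u)$-point of $Y$, so the index of $Y/u$ divides $[k(T^{(n)}_u) : k(u)] = n$.

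The main obstacle is to arrange $n$ coprime to $p$. This requires a refined analysis of Lemma \ref{integral} specialized to our setting $P = \mathbb{N}$: the integers governing the passage to integral and saturated homomorphisms in the proof are controlled by torsion cokernels of chart homomorphisms $\mathbb{N}^{\gp} \to Q^{\gp}$, and by Kato's criterion for log smoothness these torsion orders are invertible on $T$. Consequently, $n$ may be chosen coprime to $p$, completing the argument.
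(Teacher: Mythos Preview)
Your reductions (iii) $\Leftrightarrow$ (iv) and (iii) $\Rightarrow$ (ii) $\Rightarrow$ (i) are fine and match the paper. The problem is the implication (i) $\Rightarrow$ (iv): your strategy is to show that the index of $Y/u$ is prime to $p$ and then invoke Theorem \ref{gabberraynaudindex}, but this conclusion is simply false in general, so the strategy cannot succeed.

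The example in \S\ref{examplesection} already shows this. Take $n=p$ there (with $A$ ordinary so that $\mu_p\subset A$). The resulting $X$ is log smooth over $(T,M_T)$, so (i)--(iv) all hold, yet $Y=A_u\times_u^{\mu_p}T^{(p)}_u$ has period $p$: the class of $Y$ in $H^1(u,A_u)$ is the image of $[\pi]\in H^1(u,\mu_p)=k(u)^*/(k(u)^*)^p$, and this is not killed by the connecting map because any class coming from $B(k(u))=B(\mathcal{O}_T)$ (with $B=A/\mu_p$) lands in the image of $\mathcal{O}_T^*/(\mathcal{O}_T^*)^p$, which does not contain $[\pi]$. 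Hence the index of $Y$ is divisible by $p$, and Theorem \ref{gabberraynaudindex} does not apply. In particular, your appeal to Kato's criterion is a red herring: the torsion of $Q^{\gp}/P^{\gp}$ for \emph{some} chart is prime to $p$, but the integer governing saturation in Lemma \ref{integral} is tied to the map $\overline{M}_{T,t}\to\overline{M}_{W,x}$ itself (here, multiplication by the multiplicity), not to that torsion. Concretely, if $W^{(m)}$ were saturated for some $m$ prime to $p$, Hensel would give $Y(T^{(m)}_u)\neq\emptyset$, contradicting that the period is $p$.

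The paper proves (i) $\Rightarrow$ (iii) by an entirely different, more geometric route that does not pass through any numerical criterion. One localizes the given log smooth model $Z$ near a maximal point of $Z_t$ to obtain a Kummer neighbourhood $V$ mapping to $X$; then, using the $A$-action on $X$ (via $\mu\times\pr_X$), one identifies the fs base change $(X,M_X)\times_{(T,M_T)}(V,M_Z|_V)$ with $A\times_T V$ as log schemes. This makes $f_{\log}$ log smooth after a Kummer log flat cover, and log flat descent finishes the argument. The key input you are missing is the $A$-model structure of the Raynaud model, which is what allows one to trivialize $X$ after a log flat base change even when the relevant degree is divisible by $p$.
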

\begin{proof}
By Theorem \ref{torsor}, (iii) $\Leftrightarrow$ (iv) and we trivially have (iii) $\Rightarrow$ (ii) $\Rightarrow$ (i). So it remains to show (i) $\Rightarrow$ (iii). Let $g_{\log}:(Z,M_Z)\to (T,M_T)$ be a surjective log smooth morphism with $Z_u\simeq Y$.

Let $\zeta\in Z_t$ be a maximal point; since $(Z,M_Z)$ is log regular, $\mathcal{O}_{Z,\zeta}$ is a discrete valuation ring. Since $M_T$ is canonical, it follows from \cite[2.6]{niziol} that $\overline{M}_{Z,\zeta}\cong\mathbb{N}$ and $g_{\log}$ is Kummer in an open neighborhood $V$ of $\zeta$. Since $X/T$ is proper, up to further localizing at $\zeta$, we may assume there is a morphism $V\to X$ which is an open immersion on generic fibres over $T$. By \cite[2.6]{niziol}, this morphism naturally extends to a morphism of log schemes.

The fibre product $(X,M_X)\times_{(T,M_T)}(V,M_Z|_V)$ is log smooth over the log regular scheme $(X,M_X)$, hence is itself log regular. In particular, its underlying scheme is normal. On the other hand, base-changing the morphism $\mu\times\pr_X$ by $(V,M_Z|_V)\to (X,M_X)$, we get a morphism
\[ A\times_T(V,M_Z|_V)\to (X,M_X)\times_{(T,M_T)}(V,M_Z|_V) \]
On underlying schemes, this is a finite morphism of normal flat $T$-schemes which is an isomorphism on generic fibres, hence it must be an isomorphism. Now it follows easily from \cite[2.6]{niziol} that it is an isomorphism of log schemes. In particular, $f_{\log}\times_{(T,M_T)}(V,M_Z|_V)$ is log smooth, hence, by log flat descent \cite{logflatdescent}, so is $f_{\log}$.
\end{proof}

Note that this result is a higher-dimensional generalization of a result for genus 1 curves of \cite{mitsuismeets} and \cite{logsmoothcurves} shown under restrictive hypotheses on $T$.

\subsection{Application: curves}\label{curvesection}
Let $T$ be the spectrum of a discrete valuation ring as in \S\ref{conventions} (so $p=\cha(k(t))$), $M_T$ its canonical log structure and $f_{\log}:(X,M_X)\to (T,M_T)$ a morphism of fs log schemes with underlying morphism of schemes $f:X\to T$ that we assume to be flat. Denote by $l$ a prime number invertible on $T$.

We can improve the main result of \cite{logsmoothcurves} by removing the perfectness assumption on $k(t)$ and allowing the log structure to contain some horizontal components (cf. \cite{saito2,stix}).

\begin{theorem}\label{curves}
Assume $X$ is regular of dimension 2, the generic fibre of $f_{\log}$ is log smooth. Let $g:U\hookrightarrow X$ be the largest open subset on which $M_X$ is trivial. If $f$ is proper, then $f_{\log}$ is log smooth if and only if
\begin{enumerate}[label=\normalfont(\roman*)]
\item $M_X=g_*\mathcal{O}_U^*\cap\mathcal{O}_X$
\item $(X\setminus U)\subset X$ is a normal crossings divisor
\item $H^i_{\et}(U_{\bar{u}}
,\mathbb{Q}_l)$ is tamely ramified for $i\leq 1$ (where $\bar{u}\to u$ is a geometric point)
\item $f$ is cohomologically flat
\item $X_0:=(X_t)_{\red}$ is normal crossings scheme over $t$ (Def. \ref{ncdscheme}), such that any component of $X_t$ of multiplicity divisible by $p$ has no self-intersection and no two such components intersect.
\end{enumerate}
\end{theorem}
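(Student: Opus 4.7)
I will prove both implications by analysing $f_{\log}$ \'etale-locally at closed points of $X_t$, after reducing to a strictly henselian base with algebraically closed residue field via Lemma~\ref{acreduction} (each of (i)--(v) being preserved by such a faithfully flat strict base change, using Lemma~\ref{complete} and Lemma~\ref{coprime} (vi) for (iii) and (v)). The main structural input is Proposition~\ref{ncd}, which in the two-dimensional case produces explicit Kato-type charts for regular log smooth schemes over a log regular base.

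Assume first that $f_{\log}$ is log smooth. Since $(T,M_T)$ is log regular, so is $(X,M_X)$; as $X$ is regular of dimension~2, Proposition~\ref{ncd} identifies $M_X$ with the divisorial log structure of a reduced normal crossings divisor $X \setminus U$, yielding (i) and (ii). Cohomological flatness (iv) is immediate from Theorem~\ref{main}. For (v), the same proposition shows $X_0$ is a normal crossings scheme over $t$; inspection of a Kato chart $\mathbb{N} \to \mathbb{N}^2$ at an intersection point of two vertical components of multiplicities $n_1,n_2$ shows that $\gcd(n_1,n_2)$ must be prime to $p$, since the cokernel of $1 \mapsto (n_1,n_2)$ must be $p$-torsion free for log smoothness. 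This simultaneously forbids a self-intersection of a $p$-divisible vertical component and the meeting of two distinct such components. Tameness (iii) follows from log-geometric purity: the geometric generic fibre cohomology on the complement of the horizontal divisor, for a proper log smooth morphism to a trait with canonical log structure, is tame (Kato--Nakayama, Nakayama, Saito).

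Conversely, assume (i)--(v). By (i) and (ii), $M_X$ is the canonical NCD log structure of $X \setminus U$, and log smoothness is verified in local charts. At a closed point $x \in X_t$, after passing to $\hat{\mathcal{O}}_{X,x}$ one writes $\pi = u \cdot t_1^{n_1} t_2^{n_2}$ with $(t_1,t_2)$ a regular system of parameters adapted to the local components of $X \setminus U$ through $x$, $u$ a unit, and $n_i$ the multiplicity in $X_t$ of the component cut by $t_i$ (setting $n_i = 0$ if that component is horizontal or if $t_i$ is transverse to $X_t$). By (v), $\gcd(n_1,n_2)$ is prime to $p$, so the monoid map $\mathbb{N} \to \mathbb{N}^2$, $1 \mapsto (n_1,n_2)$, is a Kato chart exhibiting log smoothness at $x$. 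Cohomological flatness (iv) is used to rule out hidden factors of $\pi$ in $u$, while tameness (iii) ensures that no wild horizontal ramification forces extra generators of $M_X$---this is where the treatment genuinely differs from \cite{logsmoothcurves} over a perfect residue field.

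The step I expect to be hardest is verifying the chart at an intersection where exactly one of $n_1,n_2$ is divisible by $p$: the $p$-part of $\Div_X(\pi)$ is concentrated on one coordinate, and one must check, using cohomological flatness together with the normality of $X$, that the chart $\mathbb{N} \to \mathbb{N}^2$ genuinely presents $\hat{\mathcal{O}}_{X,x}$ as a Kato-log-smooth $\hat{\mathcal{O}}(T)$-algebra and not merely an \'etale cover of one. A second subtlety is the descent from the algebraically closed residue field case back to the given imperfect one, which relies on Lemma~\ref{acreduction} together with the stability of tameness and of the multiplicities under separable residue extensions.
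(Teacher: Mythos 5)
Your necessity direction is essentially the paper's (Proposition \ref{ncd} for (i), (ii), (v), Theorem \ref{main} for (iv), Nakayama's purity/tameness theorem for (iii)), though note that "the cokernel of $1\mapsto(n_1,n_2)$ must be $p$-torsion free" is not automatic from Kato's chart criterion, which only asserts the existence of \emph{some} good chart; the paper derives this constraint from the differential argument in Proposition \ref{ncd} (vi). The real problem is in your sufficiency direction.

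The central claim of your converse --- that at a closed point $x\in X_t$ one has $\pi=u\,t_1^{n_1}t_2^{n_2}$ with $\gcd(n_1,n_2)$ prime to $p$ by (v), so that $\mathbb{N}\to\mathbb{N}^2$, $1\mapsto(n_1,n_2)$ is a Kato chart --- is false. Condition (v) explicitly \emph{permits} a component of multiplicity divisible by $p$, provided it is "isolated" (no self-intersection, no meeting with another such component); at a point where only such a component passes, the relevant map is $\mathbb{N}\to\mathbb{N}$, $1\mapsto n_1$ with $p\mid n_1$, whose cokernel has $p$-torsion, so your chart fails Kato's torsion condition, yet the morphism \emph{is} log smooth there (this is exactly the situation of the example in \S\ref{examplesection}, where $X_t=n\cdot X_0$ with $p\mid n$). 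At such points the correct chart must involve the unit $u$ (cf. Proposition \ref{ncd} (vii), where the target is $\Spec(\mathcal{O}_T[y,x_1,\dots]/(\pi-y\prod x_i^{a_i}))$ with $y$ an extra coordinate), and whether this works is governed by whether $\dlog(u)$ is nowhere vanishing on the component --- a statement that cannot be read off locally and is precisely where the \emph{global} hypothesis of cohomological flatness enters (compare the proof of Theorem \ref{torsor}, where non-triviality of the powers of the conormal bundle is extracted from $\Gamma(\mathcal{O}_{X_t})=k(t)$). Your sentence "cohomological flatness is used to rule out hidden factors of $\pi$ in $u$" names the issue but supplies no argument; this is the entire content of the main theorem of \cite{logsmoothcurves}, to which the paper reduces (after passing to an algebraically closed residue field via Lemmas \ref{acreduction}, \ref{semistablelemma}, \ref{steintame}, \ref{steinreduction}) rather than reproving. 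Separately, your use of (iii) is off target: in the sufficiency direction its role is to show that the horizontal components, viewed as quasi-sections $T'\to T$, are \emph{tamely ramified} (via the exact sequence relating $H^1_{\et}(U_{\bar u})$ and $H^0_{\et}((X\setminus U)_{\bar u})(-1)$ and Lemma \ref{tame}), which forces the vertical multiplicity at a point met by a horizontal component to be prime to $p$ and the residue field extension to be separable, so that Proposition \ref{ncd} (v) applies there; and, via Lemma \ref{steintame}, to make the Stein factorization tame so one may assume $f_*\mathcal{O}_X=\mathcal{O}_T$.
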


The next result is similar to \cite[Prop. 1]{logsmoothcurves}, but without restriction on $k(t)$.\footnote{We point out an inaccuracy in \cite{logsmoothcurves}: the first statement of part (e) of \cite[Prop. 1]{logsmoothcurves} is (rather trivially) false but holds if one assumes $f$ to be log smooth (which is what was needed in that paper)---see part (vi) of Proposition \ref{ncd}.}

\begin{proposition}\label{ncd}
Assume $X$ is regular and the generic fibre of $f_{\log}$ is log smooth. Let $j:X_u\to X$ be the inclusion of the generic fibre of $f$ and define a log structure $N_X:=j_*\mathcal{O}_{X_u}^*\cap\mathcal{O}_X$.
\begin{enumerate}[label=\normalfont(\roman*)]
\item  $(X,M_X)$ is log regular if and only if
\begin{enumerate}[label=\normalfont(\arabic*)]
\item $M_X=g_*\mathcal{O}_U^*\cap\mathcal{O}_X$, where $g:U\hookrightarrow X$ is the largest open subset on which $M_X$ is trivial
\item $X\setminus U$ is a normal crossings divisor on $X$.
\end{enumerate}
\end{enumerate}
Moreover, if $(X,M_X)$ is log regular at a geometric point $x\to X_t$, then
\begin{enumerate}[resume,label=\normalfont(\roman*)]
\item $\overline{M}_{X,x}\simeq\mathbb{N}^r$, and preimages $x_1,...,x_r\in \mathcal{O}_{X,x}$ of a basis of $\overline{M}_{X,x}$ form part of a regular system of parameters
\item there is a canonical map $N_{X,x}\to M_{X,x}$; in particular, a uniformizer $\pi\in\mathcal{O}(T)$ can be written $\pi=v\prod_{i=1}^rx_i^{a_i}$ with $v\in\mathcal{O}_{X,x}^*$ and $a_i\in\mathbb{N}$.
\end{enumerate}
Furthermore, with the notation of \emph{(ii)--(iii)}, we have
\begin{enumerate}[resume,label=\normalfont(\roman*)]
\item if $N_{X,x}\neq M_{X,x}$, then $a_i=0$ for some $1\leq i\leq r$
\item if $p\nmid a_{i}$ for some $1\leq i\leq r$ and $k(t)\subset k(x)$ is separable, then $f_{\log}$ is log smooth at $x$
\item if $f_{\log}$ is log smooth and $p\mid a_i$ for all $1\leq i\leq r$, then $r<\dim\mathcal{O}_{X,x}$
\item if $f_{\log}$ is log smooth, then locally at $x$ there is a smooth morphism
\[ X\to\Spec(\mathcal{O}_{T}[y,x_1,...,x_r]/(\pi-y\prod_{i=1}^{r}x_i^{a_i})) \]
\item if $f_{\log}$ is log smooth, then $X_0$ is a normal crossings scheme over $t$.
\end{enumerate}
\end{proposition}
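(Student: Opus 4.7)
The plan is to peel the proposition apart in logical order, leveraging Kato's structure theory of log regular schemes and log smooth morphisms. Parts (i) and (ii) are essentially one statement. For the forward direction of (i), I would invoke the structure theorem for log regular schemes: at each $x \in X$, log regularity forces $\overline{M}_{X,x}$ to be fs and sharp and the ideal $\mathfrak{p}_x \subset \mathcal{O}_{X,x}$ generated by nonunit lifts of $\overline{M}_{X,x}$ to satisfy $\dim \mathcal{O}_{X,x}/\mathfrak{p}_x = \dim \mathcal{O}_{X,x} - \rk \overline{M}_{X,x}^{\gp}$. Since $X$ is regular, $\mathcal{O}_{X,x}/\mathfrak{p}_x$ is a regular quotient of a regular local ring, hence $\mathfrak{p}_x$ is generated by part of a regular system of parameters; unpacking this yields (ii). Log regularity further implies $M_X$ is the divisorial log structure of its triviality complement, so the identity in (i)(1) is automatic, and (i)(2) follows by assembling the local pictures of (ii). The converse of (i) is a direct verification that the divisorial log structure of an NCD on a regular scheme is log regular.

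For (iii), since $M_T$ is canonical, $\pi \in \Gamma(T,M_T)$, and its image in $M_{X,x}$ via $f^\flat$ admits a decomposition as a unit times $\prod x_i^{a_i}$ using (ii). The zero locus of this image is $X_t$, which therefore sits inside $\{x_1\cdots x_r = 0\} = X \setminus U$; hence any section of $j_*\mathcal{O}_{X_u}^*$ lying in $\mathcal{O}_X$ must lie in $M_X$, yielding the canonical map $N_X \to M_X$. Part (iv) is then a short combinatorial consequence: if every $a_i > 0$, each $x_i$ divides some power of $\pi$, so $x_i \in N_{X,x}$ and $M_{X,x} = N_{X,x}$ locally; the contrapositive is (iv).

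Parts (v)--(vii) I would handle through Kato's criterion for log smoothness applied to the neat chart $P = \mathbb{N} \to Q = \mathbb{N}^r$, $1 \mapsto (a_1,\ldots,a_r)$. The kernel on groupifications vanishes unless all $a_i = 0$, and the torsion of the cokernel is $\mathbb{Z}/\gcd(a_i)$. For (v), if some $a_i$ is coprime to $p$, the torsion has order prime to $p$, and separability of $k(x)/k(t)$ reduces smoothness of the chart fiber product morphism to a standard regular-parameter calculation. For (vi), the same criterion shows the neat chart itself fails the monoid condition when every $a_i$ is divisible by $p$, so log smoothness can only hold if the chart is non-neat, which geometrically forces an extra smooth direction beyond the $r$ divisorial generators, i.e.\ $r < \dim \mathcal{O}_{X,x}$. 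Kato's structure theorem for log smooth morphisms then delivers the explicit local model of (vii), with the extra variable $y$ absorbing precisely this smooth dimension.

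Finally, (viii) follows from (vii) by direct computation: in the standard model the special fibre is $\{y\prod x_i^{a_i} = 0\}$, whose reduction $\{y\prod x_i = 0\}$ is the transverse union of smooth divisors $\{y=0\}$ and $\{x_i = 0\}_{a_i>0}$ in a regular ambient scheme; smoothness of $X$ over the model transfers this NCD structure to $X_0$. The main obstacle I anticipate lies in making (vi) rigorous: one has to exclude the possibility that some clever non-neat chart establishes log smoothness at $x$ with $r = \dim \mathcal{O}_{X,x}$ while every $a_i$ is divisible by $p$. This requires exploiting the rigidity of neat charts at log regular points, together with the fact that any log smooth morphism of fs log schemes admits neat charts étale-locally, so that the monoid condition on $\gcd(a_i)$ must already be satisfied by the canonical chart; the extra dimension is then extracted by comparing the dimension of $X$ to that of the chart fiber product.
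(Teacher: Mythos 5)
Your treatment of (i)--(v) and (viii) runs essentially parallel to the paper's: Kato's structure theorem plus regularity of $X$ to force $\overline{M}_{X,x}\simeq\mathbb{N}^r$ with generators forming part of a regular system of parameters, the inclusion $U\subset X_u$ (equivalently $X_t\subset X\setminus U$) to get $N_X\subset M_X$, the observation that $a_i>0$ for all $i$ forces $M_{X,x}=N_{X,x}$, and Kato's chart criterion for (v) (where, like the paper, you leave the verification of smoothness of the chart model to a ``standard calculation''; note you also need to absorb the unit $v$ by extracting an $a_i$-th root, which is where $p\nmid a_i$ enters). These parts are fine at the level of detail the paper itself adopts.

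The genuine problem is (vi), and by extension (vii). You correctly identify the obstacle --- ruling out that a non-neat chart certifies log smoothness with $r=\dim\mathcal{O}_{X,x}$ --- but your proposed repair, that ``the monoid condition on $\gcd(a_i)$ must already be satisfied by the canonical [neat] chart,'' is false. Kato's criterion is existential: log smoothness guarantees that \emph{some} \'etale-local chart satisfies the kernel/torsion condition and yields an \'etale map to the toric model; it does not say the neat chart $\mathbb{N}\to\mathbb{N}^r$, $1\mapsto(a_1,\dots,a_r)$, does. Indeed the local model of (vii) itself, $\Spec(\mathcal{O}_T[y,x_1,\dots,x_r]/(\pi-y\prod x_i^{a_i}))$ at a point where $y$ is a unit, and the example of \S\ref{examplesection}, are log smooth with all $a_i$ divisible by $p$, so their neat charts have cokernel torsion $\mathbb{Z}/\gcd(a_i)$ of order divisible by $p$; the good chart is necessarily non-neat (an extra generator mapping to a unit). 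What one can extract from the chart condition is only that the face $F\subset Q$ of elements mapping to units must have $\rk F^{\gp}\geq 1$; converting that into $\dim\mathcal{O}_{X,x}/(x_1,\dots,x_r)\geq 1$ requires a further argument about where the image point sits in the chart model, which you do not supply. The paper avoids all of this by a differential computation: writing $\dlog(\pi)=\dlog(v)+\sum a_i\dlog(x_i)\equiv\dlog(v)\pmod p$ on $Y=V(x_1,\dots,x_r)$, it shows $\dlog(\pi)$ lies in the kernel of the residue map $\rho:\omega^1_X|_Y\to\mathcal{O}_Y^r$ while $\dlog(x_1),\dots,\dlog(x_r)$ map to a basis of the target, and $\dlog(\pi)$ remains part of a basis of $\omega^1_X|_{X_t}$ by the locally split exact sequence $0\to f^*\omega^1_T\to\omega^1_X\to\omega^1_{X/T}\to 0$; hence $r+1\leq\rk\omega^1_X=d$. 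Similarly, (vii) is not ``delivered'' by Kato's structure theorem: the paper must prove that the \emph{specific} map $y\mapsto v$ to the model is smooth, which it does by showing $Y$ is smooth over $t$ via a rank count on $\Omega^1_{Y/t}$ and then invoking the local criterion of flatness. You would need to supply an analogue of this to complete (vi)--(vii).
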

\begin{proof}
(i) is local at a geometric point $x\to X$. First assume $(X,M_X)$ is log regular. Then $M_X=g_*\mathcal{O}_U^*\cap\mathcal{O}_X$ by \cite[2.6]{niziol}. By \cite[Prop. 3.2]{logreg} there is a sharp fs monoid $P$ of rank $r\leq d:=\dim\mathcal{O}_{X,x}$ inducing the log structure $M_X$, a regular local ring $R$ of dimension at most 1, and elements $x_{r+1},...,x_d$ forming part of a regular system of parameters such that $\widehat{\mathcal{O}}_{X,x}=R[[P]][[x_{r+1},...,x_d]]/(\theta)$, where $\theta$ has constant term equal to $\cha(k(x))$. Using the regularity of $X$, one easily shows there are $x_1,...,x_r\in P$ such that $P=\oplus_{i=1}^r\mathbb{N}x_i$ and $x_1,...,x_d$ form a regular sequence of parameters (cf. \cite[5.2]{niziol} or \cite[proof of Prop. 1]{logsmoothcurves}). Then $\prod_{i=1}^rx_i$ defines a normal crossings divisor whose support is equal to $X\setminus U$. This implies the necessity of the conditions.

For the sufficiency, let $x_1,...,x_d\in\mathcal{O}_{X,x}$ be a regular system of parameters such that $\prod_{i=1}^rx_i$ defines the normal crossings divisor $X\setminus U$. The elements $x_1,...,x_r\in g_*\mathcal{O}_U^*\cap\mathcal{O}_X=M_X$ correspond to the irreducible components of $X\setminus U$ and we have $g_*\mathcal{O}_U^*/\mathcal{O}_X^*=\oplus_{i=1}^r\mathbb{Z}x_i$. Thus, the monoid $\oplus_{i=1}^r\mathbb{N}x_i$ induces $M_X$ at $x$, and $(X,M_X)$ is log regular at $x$ by definition, proving (i) as well as (ii).

Since $M_T$ is the canonical log structure, we have $U\subset X_u$, hence $N_X\subset M_X$ (by (i)), proving the first assertion of (iii); the second follows immediately. Note that the prime divisors of $\pi$ are the $x_i$ for which $a_i>0$, so if $N_{X,x}\neq M_{X,x}$ then $a_i=0$ for some $1\leq i\leq r$, hence (iv).

For (v), taking an $a_i$th root of $v$ we have $\pi=(v^{1/a_{i}}x_i)^{a_i}\prod_{1\leq j\leq r, j\neq i}x_j^{a_j}$ from which we can easily construct a log smooth chart as in \cite[3.5]{log}. The details are left to the reader.

For the proof of (vi)--(viii), assume $f_{\log}$ is log smooth and $X$ connected of dimension $d=\dim\mathcal{O}_{X,x}$. Consider the two cases
\begin{enumerate}[label=\normalfont(\alph*)]
\item $p\mid a_i$ for all $1\leq i\leq r$
\item $p\nmid a_i$ for some $1\leq i\leq r$.
\end{enumerate}
In case (b), we may assume $v^{1/a_i}\in\mathcal{O}_X$, hence, up to relabeling we may assume $v=1$ in this case.

For a $T$-scheme $S$ with log structure, write $\omega^1_{S}$ for the logarithmic differentials over $T$ with the trivial log structure. If $S$ is a log scheme over $(T,M_T)$, then write $\omega^1_{S/T}$ for the relative log differentials over $(T,M_T)$, and similarly for morphisms of log schemes. Since $f_{\log}$ is smooth, by \cite[3.12]{log} we have an exact sequence
\[ 0\to f^*\omega^1_{T}\to\omega^1_X\to\omega^1_{X/T}\to 0 \]
with $\omega^1_{X/T}$ a vector bundle of rank $d-1$. Pulling back to $X_t$, we deduce an exact sequence
\[ 0\to f^*\omega^1_t\to\omega^1_{X_t}\to\omega^1_{X_t/t}\to 0 \]
Since $\omega^1_T=k(t)\dlog(\pi)$, it follows that $\omega^1_{X_t}$ is locally free of rank $d$ with $\dlog(\pi)$ forming part of a basis.

Consider the closed subscheme $Y\subset X$ of ideal $I=(x_1,...,x_r)$. There is an $\mathcal{O}_Y$-linear residue map $\rho:\omega^1_{X}|_Y\to\mathcal{O}_Y^r$ mapping $\dlog(x_1),...\dlog(x_r)$ to a basis of the target, whose kernel contains the usual differentials $\Omega^1_{X/T}|_Y$ (\cite[Prop. 1]{logsmoothcurves}, cf. \cite[IV, 2.3.5]{logbook}). So $\dlog(x_1),...\dlog(x_r)$ form part of a basis of $\omega^1_X|_Y$. In case (a) we have $\dlog(\pi)\equiv\dlog(v)\mod p$, hence $\rho(\dlog(\pi))=0$. It follows that $\text{d}v,\dlog(x_1),...,\dlog(x_r)$ form part of a basis of $\omega^1_X|_Y$ in this case.

Endowing $Y$ with the inverse image log structure of $X$, by \cite[IV, 2.3.2]{logbook} we have an exact sequence of log differentials
\[ I/I^2\to\omega^1_X|_Y\to\omega^1_Y\to 0 \]
and clearly the left hand map is zero. Hence $\omega^1_Y\cong\omega^1_X|_Y$.

Let $y$ be an indeterminate in case (a) (resp. $y=1$ in case (b)). Define a log scheme $(U,M_U)$ by
\[ U=\Spec(\mathcal{O}(T)[y^{\pm 1},x_1,...,x_r]/(\pi-y\prod_{i=1}^rx_i^{a_i})) \]
and $M_U$ the natural log structure induced by $\oplus_{i=1}^r\mathbb{N}x_i$. Let $V\subset U$ be the closed subscheme of ideal $(x_1,...,x_r)$. The natural morphism $X\to U$ mapping $y$ to $v$ induces strict morphisms of log schemes $(X,M_X)\to (U,M_U)$ and  $(Y,M_X|_Y)\to (V,M_U|_V)$. Denoting the underlying morphism $h:Y\to V$, we have a right-exact sequence
\begin{equation}\label{Yexact}
0\to h^*\Omega^1_{V/t}\to\Omega^1_{Y/t}\to\Omega^1_{Y/V}\to 0
\end{equation}
Since $\omega^1_{V}\cong\omega^1_{U}|_V$, $\omega^1_{V}$ is a free $\mathcal{O}_V$-module generated by $\text{d}y,\dlog(x_1),...,\dlog(x_{r})$. It follows that $\omega^1_{Y/V}\cong\Omega^1_{Y/V}$ is locally free of rank $d-(r+1)$ in case (a) (resp. $d-r$ in case (b)). Since $\text{d}v$ vanishes nowhere in case (a) (resp. $V=t$ in case (b)) and $Y$ is regular, hence reduced, the sequence (\ref{Yexact}) must also be left exact. Hence, $\Omega^1_{Y/t}$ is locally free of rank $d-r$. Since $\dim Y=d-r$, by \cite[IV\textsubscript{4}, 17.15.5]{ega} $Y$ is smooth over $t$. It now follows from (\ref{Yexact}) that $h$ is smooth. Applying \cite[0\textsubscript{IV}, 15.1.21]{ega}, we see that $X\to U$ is flat along $Y$, hence smooth, proving (vii) as well as (vi). Finally, (viii) follows from (vii).
\end{proof}

For the proof of Theorem \ref{curves} we will need a couple of lemmata.

\begin{lemma}\label{steintame}
Assume the generic fibre of $f_{\log}$ is log smooth and $f$ is proper and satisfies (N). Let $X\to T'\to T$ be the Stein factorization of $f$. If $\bar{u}\to T$ is a geometric point lying over $u$ and $H^0_{\et}(X_{\bar{u}},\mathbb{Z}/n)$ is tamely ramified for some integer $n>1$ prime to $p$, then $T'\to T$ is a tamely ramified covering. In particular, this holds if $f_{\log}$ is log smooth.
\end{lemma}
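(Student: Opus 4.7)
The plan is to use the Stein factorization to convert the tameness hypothesis on $H^0_{\et}(X_{\bar u},\mathbb{Z}/n)$ into tameness of the $\Gal(\bar u/u)$-action on the finite set $T'_{\bar u}$, from which tameness of $T'\to T$ is classical. By Lemma \ref{steinreduction} (i), $f':X\to T'$ is proper with $f'_*\mathcal{O}_X=\mathcal{O}_{T'}$ and $T'$ is the spectrum of a one-dimensional semilocal normal ring. Flat base change along $\bar u\to T$ gives $(f'_{\bar u})_*\mathcal{O}_{X_{\bar u}}=\mathcal{O}_{T'_{\bar u}}$, so via the idempotent description of connected components of noetherian schemes I would obtain a $\Gal(\bar u/u)$-equivariant bijection $\pi_0(X_{\bar u})\overset{\sim}{\to}T'_{\bar u}$. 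Since $(X_u,M_{X_u})$ is log smooth over $u$ (which carries the trivial log structure, $M_T$ being canonical), it is log regular and hence $X_u$, as well as $X_{\bar u}$, is normal and a fortiori reduced. Thus $H^0_{\et}(X_{\bar u},\mathbb{Z}/n)\cong\Hom(\pi_0(X_{\bar u}),\mathbb{Z}/n)$, and the bijection above yields
\[ H^0_{\et}(X_{\bar u},\mathbb{Z}/n)\cong\Hom(T'_{\bar u},\mathbb{Z}/n) \]
as $\Gal(\bar u/u)$-modules.

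Since $n>1$, characteristic functions of singletons separate points of $T'_{\bar u}$, so the kernel of the Galois action on the right-hand side coincides with that on $T'_{\bar u}$ itself; the tameness assumption then forces the wild inertia to act trivially on $T'_{\bar u}$. Coupled with the observation that the residue fields of $T'$ at points above $u$ are separable over $k(u)$---which follows because the connected components of $X_u$ are \'etale-locally of the form $\Spec(k(u)[Q])$ for some fs monoid $Q$, hence geometrically normal---this is the standard criterion for the finite morphism $T'\to T$ to be tamely ramified.

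The hard part will be the \emph{in particular} clause: when $f_{\log}$ itself is log smooth over $(T,M_T)$, one must verify tameness of $H^0_{\et}(X_{\bar u},\mathbb{Z}/n)$ as a Galois module. For this I would invoke the theorem that the \'etale cohomology of the geometric generic fibre of a proper log smooth morphism over a log regular trait with canonical log structure is tamely ramified---a consequence of the quasi-unipotence of log nearby cycles (cf.~\cite{illusie-kato-nakayama}).
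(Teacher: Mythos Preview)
Your proposal is correct and follows essentially the same route as the paper: both identify $H^0_{\et}(X_{\bar u},\mathbb{Z}/n)$ with $H^0_{\et}(T'_{\bar u},\mathbb{Z}/n)$ via the Stein factorization, use log regularity of the generic fibre to see that $T'_u$ is a product of separable extensions of $k(u)$, and then convert triviality of the wild inertia action on $H^0$ into tameness of $T'/T$. The paper packages this last step as a separate lemma (Lemma~\ref{tame}), whereas you spell it out inline via characteristic functions; and for the \emph{in particular} clause the paper cites Nakayama's theorem~\cite{nakayama} directly rather than going through quasi-unipotence of log nearby cycles, but these are the same circle of results.
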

\begin{proof}
By assumption, the geometric generic fibre of $f_{\log}$ is log regular, hence reduced. This implies that $T'_u$ is the spectrum of a product of separable extensions of $k(u)$ (\cite[IV\textsubscript{2}, 4.6.1]{ega}).

We have $H^0_{\et}(T'_{\bar{u}},\mathbb{Z}/n)=H^0_{\et}(X_{\bar{u}},\mathbb{Z}/n)$, and $T'$ is normal (Lemma \ref{steinreduction}). Now apply the next lemma to deduce the first statement. For the second statement, note that $X$ is normal in this case and, by Nakayama's theorem \cite{nakayama}, the tame ramification condition holds.
\end{proof}

\begin{lemma}\label{tame}
Let $k(u)\subset K$ be a finite separable extension and $S=\Spec(K)$. If $H^0_{\et}(S\times_u\bar{u},\mathbb{Z}/n)$ is tamely ramified for some integer $n>1$, then $k(u)\subset K$ is a tamely ramified extension. The same holds for $\mathbb{Z}_l$ or $\mathbb{Q}_l$ coefficients.
\end{lemma}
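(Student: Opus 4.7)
My plan is to reduce the statement to triviality of the wild inertia action on a finite permutation set, and then to invoke standard Galois theory. Because $K/k(u)$ is finite separable, the scheme $S \times_u \bar{u} = \Spec(K \otimes_{k(u)} \overline{k(u)})$ decomposes as a disjoint union of copies of $\bar{u}$ indexed by the finite set $X := \Hom_{k(u)}(K, \overline{k(u)})$, on which the absolute Galois group $\Gal(\overline{k(u)}/k(u))$ acts by post-composition. Consequently $H^0_{\et}(S \times_u \bar{u}, \mathbb{Z}/n)$ is canonically the permutation module $(\mathbb{Z}/n)^X$, and the identification is analogous for $\mathbb{Z}_l$- and $\mathbb{Q}_l$-coefficients.

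Next I would observe that, since $n > 1$, the wild inertia subgroup $P \subseteq \Gal(\overline{k(u)}/k(u))$ acts trivially on $(\mathbb{Z}/n)^X$ if and only if it acts trivially on the set $X$ itself: testing on the indicator functions $e_x \in (\mathbb{Z}/n)^X$, one has $\sigma \cdot e_x = e_{\sigma(x)}$, so any nontrivial permutation of $X$ by an element of $P$ would produce a nontrivial module action (using $1 \neq 0$ in $\mathbb{Z}/n$). The same argument applies verbatim for $\mathbb{Z}_l$- and $\mathbb{Q}_l$-coefficients.

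Finally I would translate triviality of the $P$-action on $X$ into tame ramification of $K/k(u)$. The action on $X$ factors through $G := \Gal(\widetilde{K}/k(u))$, where $\widetilde{K} \subseteq \overline{k(u)}$ is the Galois closure of $K$, and this permutation representation is \emph{faithful}: its kernel is the largest normal subgroup of $G$ contained in the stabilizer $\Gal(\widetilde{K}/K)$ of a fixed embedding, and this normal core is trivial precisely because $\widetilde{K}$ is the smallest Galois extension containing $K$ (equivalently, $K$ together with its conjugates generates $\widetilde{K}$). Hence $P$ acts trivially on $X$ iff its image in $G$ is trivial, iff $\widetilde{K}/k(u)$ is tamely ramified; and standard Galois theory (tameness passes to subextensions and is stable under taking composita) shows that $\widetilde{K}/k(u)$ is tame iff $K/k(u)$ is tame. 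I do not foresee any substantive obstacle here—the statement is essentially an unwinding of definitions, with the only nontrivial ingredient being the faithfulness of the permutation representation, which is an elementary Galois-theoretic fact.
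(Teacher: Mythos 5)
Your proof is correct and takes essentially the same route as the paper: both identify $H^0_{\et}(S\times_u\bar{u},\mathbb{Z}/n)$ with the permutation module on the finite set of embeddings (equivalently, connected components) and use $n>1$ to force the wild inertia $P$ to act trivially on that set. The paper then concludes directly that each component of $\Spec(K\otimes_{k(u)}k(\bar{u})^P)$ is trivial, i.e.\ $K\subseteq k(\bar{u})^P$, the maximal tamely ramified extension; your detour through the Galois closure and faithfulness of the coset action reaches the same conclusion, only slightly less directly.
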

\begin{proof}
We may assume $k(u)\subset k(\bar{u})$ is separable. Let $P\subset\Gal(k(\bar{u})/k(u))$ be the wild inertia group. Then, $H^0_{\et}(S_{\bar{u}},\mathbb{Z}/n)^P=(\mathbb{Z}/n)^e$, where $e$ is the number of connected components of $S_{\bar{u}}/P=\Spec(K\otimes_{k(u)}k(\bar{u})^P)$. If $H^0_{\et}(S_{\bar{u}},\mathbb{Z}/n)$ is tamely ramified, i.e., $P$ acts trivially on $H^0_{\et}(S_{\bar{u}},\mathbb{Z}/n)$, then any connected component of $S_{\bar{u}}/P$ is geometrically connected over $k(\bar{u})^P$, hence must be isomorphic to $\Spec(k(\bar{u})^P)$, whence $K\subset k(\bar{u})^P$. This proves the first statement and the second can be reduced to this one.
\end{proof}

\begin{proof}[Proof of Theorem \ref{curves}]
We first note that the conditions are necessary: (i) follows from \cite[2.6]{niziol}, (ii) from Proposition \ref{ncd}, (iii) from Nakayama's theorem \cite{nakayama}, (iv) from Theorem \ref{main}, and (v) follows from (ii) and Proposition \ref{ncd}.

To show the sufficiency, first note that since $X_u$ is log smooth over $u$ it is geometrically normal, hence smooth. In fact, locally for the \'etale topology its log structure can be given by the monoid $\mathbb{N}$ and an \'etale map $k(u)[\mathbb{N}]\to\mathcal{O}_{X_u}$. This implies that the log structure of $X_u$ is given by a finite set of closed points whose residue fields are separable extensions of $k(u)$. Moreover, (iii) implies that these extensions are tamely ramified: indeed, we have an exact sequence
\[ H^1_{\et}(U_{\bar{u}},\mathbb{Q}_l)\to H^0_{\et}((X\setminus U)_{\bar{u}},\mathbb{Q}_l)(-1)\to H^2_{\et}(X_{\bar{u}},\mathbb{Q}_l) \]
and $H^2_{\et}(X_{\bar{u}},\mathbb{Q}_l)=\oplus\mathbb{Q}_l(-1)$ is unramified, so, since the wild inertia group is pro-$p$ (\cite[IV, \S 2]{localfields}), (iii) implies $H^0_{\et}((X\setminus U)_{\bar{u}},\mathbb{Q}_l)$ is tamely ramified and the claim follows from Lemma \ref{tame}.

Let $N_X$ be as in the statement of Proposition \ref{ncd}. If $x\to X_t$ is a geometric point, then $\overline{N}_{X,x}\neq 0$. If $\overline{M}_{X,x}^{\gp}\neq \overline{N}_{X,x}^{\gp}$, then by Proposition \ref{ncd} we have $\overline{M}_{X,x}\cong\mathbb{N}^2$ and there are $x_1,x_2\in M_{X,x}$ mapping to a basis of $\overline{M}_{X,x}$ such that $\pi=vx_2^a$ for some $v\in\mathcal{O}_{X,x}^*$. The closed subscheme of $T'\subset X_{(x)}$ of equation $x_1=0$ is regular, flat over $T$, and $T'_u$ is a point of $X_u\setminus U$, hence $T'\to T$ is tamely ramified by the above. In particular, this implies that $k(x)$ is a separable extension of $k(t)$. Since $x_1,x_2$ generate the maximal ideal of $\mathcal{O}_{X,x}$, the image of $x_2$ in $\mathcal{O}_{T'}$ is a uniformizer, hence $p\nmid a$. By Proposition \ref{ncd} (v), $f_{\log}$ is log smooth at this point.

Now suppose $x\to X_t$ is a geometric point such that $\overline{M}_{X,x}^{\gp}=\overline{N}_{X,x}^{\gp}$. In this case we have $(g_*\mathcal{O}_U^*)_x=(j_*\mathcal{O}_{X_u}^*)_x$, hence $M_{X,x}=N_{X,x}$ by (i). We reduce this case to the case $k(t)$ is algebraically closed as follows. First of all, we may assume $T$ to be strictly local. Moreover, by Lemma \ref{steintame} and Lemma \ref{steinreduction} we may assume $f_*\mathcal{O}_X=\mathcal{O}_T$, hence the fibres of $f$ are geometrically connected. Let $S$ be the spectrum of a henselian discrete valuation ring with algebraically closed residue field as in Lemma \ref{acreduction} and let $\bar{v}\to S_u$ be a geometric point lying above $\bar{u}$. Then, $f\times_TS$ is cohomologically flat and, by Lemma \ref{semistablelemma}, $X_S$ is regular and $X_0\times_TS\subset X\times_TS$ is a normal crossings divisor. Note that since $X_0$ is a normal crossings scheme, its multiplicities are equal to its geometric multiplicities. Hence, (v) holds for $f\times_TS$. Moreover, the action of the wild inertia subgroup of $\pi_1(S_u,\bar{v})$ on $H^i_{\et}(U_{\bar{v}},\mathbb{Q}_l)\cong H^i_{\et}(U_{\bar{u}},\mathbb{Q}_l)$ factors through that of the wild inertia subgroup of $\pi_1(u,\bar{u})$, hence (iii) holds for $X_S$. Applying \cite[Thm. 1]{logsmoothcurves}, we deduce that the induced morphism $(X_S,N_{X_S})\to (S,M_S)$ is log smooth. Thus, $f_{\log}\times_TS$ is log smooth above $x$ and so is $f_{\log}$ by Lemma \ref{acreduction}.
\end{proof}

\bibliographystyle{amsplain}
\bibliography{gabber.bib}

\end{document}